\numberwithin{equation}{section}
\newtheorem{theorem}{Theorem}[section]
\newtheorem{proposition}[theorem]{Proposition}
\newtheorem{lemma}[theorem]{Lemma}
\newtheorem{corollary}[theorem]{Corollary}
\newtheorem{Definition}[theorem]{Definition}
\newenvironment{definition}{\begin{Definition}\rm}{\end{Definition}}
\newtheorem{Remark}[theorem]{Remark}
\newenvironment{remark}{\begin{Remark}\rm}{\end{Remark}}
\newtheorem{RHproblem}[theorem]{RH problem}
\newtheorem{Example}[theorem]{Example}
\newcommand{\C}{\mathbb{C}}
\newcommand{\D}{\mathbb D}
\newcommand{\F}{\mathbb F}
\newcommand{\N}{\mathbb{N}}
\newcommand{\R}{\mathbb{R}}
\newcommand{\Q}{\mathbb{Q}}
\renewcommand{\P}{\mathbb{P}}
\newcommand{\DD}{\mathcal D}
\newcommand{\NN}{\mathcal N}
\newcommand{\PP}{\mathcal P}
\newcommand{\MS}{\mathcal S}
\newcommand{\UU}{\mathcal U}
\newcommand{\ZZ}{\mathcal Z}
\newcommand{\OO}{\mathcal O}
\def \deg{\mbox{{\rm deg} }}
\renewcommand{\bar}{\overline}
\renewcommand{\tilde}{\widetilde}
\renewcommand{\hat}{\widehat}
\begin{document}
\title{Generic properties of Pad\'e approximants and Pad\'e universal series}
\author{S. Charpentier, V. Nestoridis and F. Wielonsky}
\address{St\'ephane Charpentier, 
Institut de Math\'ematiques de Marseille, UMR 7373, Aix-Marseille Universit\'e, Technop\^ole Ch\^ateau-Gombert, 39 rue F. Joliot Curie, 13453 Marseille Cedex 13, FRANCE}
\email{stephane.charpentier.1@univ-amu.fr}
\address{Vassili Nestoridis, 
Department of Mathematics, University of Athens Panepisitmioupolis, 15784 Athens, GREECE}
\email{vnestor@math.uoa.gr}
\address{Franck Wielonsky, 
Institut de Math\'ematiques de Marseille, UMR 7373, Aix-Marseille Universit\'e, Technop\^ole Ch\^ateau-Gombert, 39 rue F. Joliot Curie, 13453 Marseille Cedex 13, FRANCE}
\email{franck.wielonsky@univ-amu.fr}
\keywords{Pad\'e approximant, generic property, universal Taylor series}
\subjclass[2010]{41A21, 30K05, 41A10}
\maketitle
\begin{abstract}
We establish properties concerning the distribution of poles of Pad\'e approximants, which are generic in Baire category sense. We also investigate
%then complement previous studies of 
Pad\'e universal series, an analog of classical universal series, where Taylor partial sums are replaced with Pad\'e approximants. In particular, we complement previous studies on this subject by exhibiting dense or closed infinite dimensional linear subspaces of analytic functions in a simply connected domain of the complex plane, containing the origin, whose all non zero elements are made of Pad\'e universal series. We also show how Pad\'e universal series can be built from classical universal series with large Ostrowski-gaps.

\end{abstract}
\section{Introduction}
Many questions related to polynomial or rational approximations have been addressed from the point of view of Baire category, see e.g. \cite{Bar,Her,Her2,BOR,BOR2}. For instance, P.B. Borwein established in \cite{BOR} that the set 
%answered a question by Gonchar \cite{Gon1} proving not only that there exists an 
of entire functions $f$, for which the poles of their best rational approximations on $[-1,1]$ are dense in $\C$, is a residual set.
%, but also that this property is generic. 
%Here by generic we mean that the set of such functions is dense in the set $H(\C)$ of all entire functions, endowed with the topology of uniform convergence on compact sets.
He also proved that Baker's conjecture, on the existence of a subsequence in the $m$-th row of the Pad\'e table that converges locally uniformly, holds true generically in the set of entire functions.
%Similarly Baker conjectured that if $f$ is an entire function then it can be approximated by some subsequence of the $m$th row of its Pad\'e table uniformly on compact sets. The case $m=0$ is that of Taylor series, $m=2$ was proved in \cite{BGM}. Borwein proved the following result.
%
%\begin{Theorem}[\cite{Bor}]\label{thm1-intro}Let $\left(p_n,q_n\right)_{n\in \N}$ be a sequence of pairs of positive integers such that $\left(p_n-q_n\right)_n$ tends to infinity. The set of entire functions $f$ for which there exists an infinite subset $I$ of positive integers so that the $\left(p_n,q_n\right)_{n\in I}$ Pad\'e approximants converge to $f$ in $H(\C)$ is residual in $H(\C)$.
%\end{Theorem} 
Fournodavlos and Nestoridis recently improved this result 
%by showing that the conclusion still holds if one only assumes the sequence $\left(p_n\right)_{n\in \N}$ to be unbounded 
in \cite{FNes}. In the first part of the present paper, we derive similar results about the density of poles for the case of Pad\'e approximants. We also show that, generically, the poles of a subsequence of the Pad\'e approximants go to infinity.

%Questions raised about best rational approximations can be asked in the context of Pad\'e approximants. In particular it is natural to wonder whether there exist holomorphic functions on some domain $\Omega$ containing $0$ whose set of poles of Pad\'e approximants -if they exist- is dense in $\C$ or in some subset of $\C$. Thanks to Baire Category Theorem we give a positive anwser to this question. Combining this to Theorem \ref{DFN-main-intro}, we immediately deduce the existence of a residual subset of $H(\Omega)$ consisting of Pad\'e universal series whose set of poles of its Pad\'e approximants  is dense in $\C$.
The remaining, and main part, of the paper is devoted to the study of Pad\'e universal series. We first need to introduce a few notations and the notion of (classical) universal series.
For $\Omega$, a simply connected domain in $\C$, containing the origin, we denote by $H(\Omega)$ the set of holomorphic functions on $\Omega$ endowed with the topology of locally uniform convergence. For a compact subset $K$ of $\C$, 
%of connected complement, 
we also denote by $A(K)$ the set of continuous functions in $K$, holomorphic in its interior, endowed with the topology of uniform convergence. For a power series 
$f=\sum_{n\geq 0}a_{n}z^{n}$, we denote by $S_{N}(f)$ the $N$-th partial sum 
$\sum_{n= 0}^{N}a_{n}z^{n}$ of $f$.
We recall that a {\it universal series} in $\Omega$, with respect to an increasing sequence $\mu=(p_{n})_{n}$ of positive integers, is a function $f$ in $H(\Omega)$ such that, for any compact set $K\subset\C \setminus \Omega$, with connected complement, and any function $h$ in $A(K)$, there is a subsequence $(\lambda_{n})_{n}$ of $\mu$ such that the partial sums $S_{\lambda_{n}}(f)$ converges to $h$ in $A(K)$, while they converge to $f$ in $H(\Omega)$, cf. \cite{bgnp,Luh2,Nes}.
%While quasi-all power series of radius of convergence $R$ can be approximated by some subsequence of its Pad\'e table uniformly on each compact subset of the disc $D(0,R)$, is there any such power series whose Pad\'e approximants \emph{diverge as much as possible} outside $D(0,R)$? This question was still addressed in a more general setting using the point of view of category which is quite popular in universal series context (see  \cite{bgnp,Nes}).
%Let us give the definitions of what we call universal series and Pad\'e universal series. For $\Omega$ a domain of the complex plain, we denote by $H(\Omega)$ the set of holomorphic functions on $\Omega$ endowed with the topology of uniform convergence on compact subsets.
%
%\begin{definition}[Universal series]\label{defi-univ-Taylor-series-intro} Let $\Omega \subset \C$ be a simply connected domain containing $0$. A universal series is a function $f$ in $H(\Omega)$ such that, for every compact set $K\subset\C \setminus \D$ with connected complement and every continuous function $h$ on $K$ holomorphic in its interior, there exists an infinite subset $I$ of $\N$ with the following properties:
%\begin{enumerate}[a)]
%\item The partial sums $S_{n}\left(f\right)=\sum _{k=0}^{n}a_kz^k$ of the Taylor expansion $\sum _{k=0}^{\infty}a_kz^k$ of $f$ at $0$ converges to $h$ uniformly on $K$ as $n$ tends to $\infty$, $n\in I$;
%\item $S_{n}\left(f\right)$ converges to $f$ uniformly on every compact subset of $\Omega$ as $n$ tends to $\infty$, $n\in I$.
%\end{enumerate}
%We denote by $U(\Omega)$ the set of such universal series.
%\end{definition}
It was proved in \cite{Nes2} that the set $U^{\mu}(\Omega)$ of universal series is residual in $H(\Omega)$, see also \cite{Nes}. 
%
%We recall that a subset $A$ of a complete metric space $X$ is residual in $X$ if $A$ contains a dense countable intersection of open sets.
%
%\begin{theorem}[\cite{Nes2}]\label{thm-univ-Taylor-series-intro}Let $\Omega\subset \C$ be a simply connected domain containing $0$. $U(\Omega)$ is residual in $H(\Omega)$.
%\end{theorem}

Several articles recently dealt with the notion of \emph{Pad\'e universal series}, an analog of universal series, where the role of partial Taylor sums is played by Pad\'e approximants, see e.g. \cite{DFN,FNes,Nes3}. For a power series $f$ at the origin, we denote by $[f;p_{n}/q_{n}]$ the Pad\'e approximant of degree $(p_{n},q_{n})$ to $f$, see Section \ref{Prelim-Pade} for more details on Pad\'e approximants.
\\[\baselineskip]
{\bf Definition of Pad\'e universal series }%[Pad\'e universal series]\label{defin-Pade-intro}
%Let $\Omega\subset \C$ be a simply connected domain containing $0$. 
Let $\MS :=\left(p_n,q_n\right)_n$ be a sequence of pairs of non negative integers. A Pad\'e universal series in $\Omega$, with respect to $\MS$, is a function $f\in H(\Omega)$ such that, for every compact set $K$ in $\C \setminus \Omega$, with connected complement, and every function $h$ in $A(K)$, there exists a subsequence $\MS'$ of $\MS$ such that:
\begin{enumerate}[i)]
%\item The $\left(p_{n},q_{n}\right)$ Pad\'e approximant $\left[f;p_{n}/q_{n}\right]$ of $f$ at $0$ exists for every $n\in I$;
\item $[f;p_{n}/q_{n}]$ converges to $h$ in $A(K)$ along the subsequence $\MS'$;%as $n$ tends to $\infty$, $n\in I$;
\item $[f;p_{n}/q_{n}]$ converges to $f$ in $H(\Omega)$ along the subsequence $\MS'$.
% uniformly on every compact subset of $\Omega$ as $n$ tends to $\infty$, $n\in I$;
\end{enumerate}
Assuming that the sequence $(p_{n})_{n}$ is unbounded, the set
$\UU ^{\MS}(\Omega)$ of Pad\'e universal series is residual in $H(\Omega)$, see
\cite[Theorem 3.1]{DFN}. Actually, it is easy to see, by a Rouch\'e type argument in the complement of $\Omega$, that the hypothesis on the sequence $(p_{n})_{n}$ is mandatory for that result to hold true.
A stronger notion of Pad\'e universal series has been considered, where the Pad\'e approximants approach any rational functions on any compact sets (not necessarily with connected complement) of $\C\setminus\Omega$ with respect to the chordal metrics \cite{Nes3}. We shall not pursue this direction here.

%A result similar to Theorem \ref{DFN-main-intro} can be proved using \cite{Nes3} and a slight modification of the argument given above to show that c) implies a) in the previous theorem (this time both sequences $\left(p_n\right)_n$ and $\left(q_n\right)_n$ have to be unbounded). In this paper we won't investigate such kind of Pad\'e universal series.

In the investigation of universality, it is natural to seek some significant subsets of the set of universal elements having a linear structure. For instance, one may ask if there exists a linear subspace, made of universal elements (except for the zero one), which is dense in the ambient space. If true, one says that the universal elements are {\it algebraically generic}. One may also ask about the existence of a closed and infinite dimensional subspace, in which case one speaks of {\it spaceability} of the universal elements. Since the last decade, there has been a growing interest in this line of research. We refer the reader to \cite{Aron1,Aron2,Bay0,BO,BPS,G-E,Montes,Men2} and the references therein. The non linearity of Pad\'e approximation makes the above questions a little bit trickier for Pad\'e universal series. 
%In universality and especially for classical universal series, this is usual to ask if there exist dense (resp. closed infinite dimensional) subspaces whose nonzero elements share all the same universal properties. If such a phenomenom occurs we say that the set of such universal elements is algebraically generic (resp. spaceable). Concerning Pad\'e approximants, it is likely that given two power series such that there exist $\left(p,q\right)$ Pad\'e approximants, we can find a nonzero linear combination of them whose $\left(p,q\right)$ Pad\'e approximant does not exist. Roughly speaking the \emph{map} sending a power series to its $\left(p,q\right)$ Pad\'e approximants is not linear (it is in fact not well defined). 
Nevertheless, by considering particular series whose Pad\'e approximants, along a row of the Pad\'e table, have prescribed denominators, we answer both questions in the affirmative, see Theorems \ref{alg-gene} and \ref{spaceability}.

Our method of proof is actually an adaptation of the one used for classical universal series. This makes it possible to derive results on Pad\'e universal series from related results about universal series.
% it appear a useful connexion between universal Taylor series and Pad\'e universal series in the sense of Definition \ref{defin-Pade-intro}. Indeed we show that main known results on Pad\'e universality on simply connected domains can be derived from universal series. 
In particular, from the recent work \cite{CM}, we know that there exists a 
%dense $G_{\delta}$ 
residual subset of $H(\D)$ (or more generally of $H(\Omega)$), made of universal Taylor series with so-called {\it large} Ostrowski-gaps. Upon performing a simple division, we are then lead to
Pad\'e universal series. Together with Baire category theorem, this allows us to recover the fact that $\UU^{\MS}(\D)$ is residual in $H(\D)$, \cite[Theorem 3.1]{DFN}.
%Theorem \ref{DFN-main-intro} only from universal series.
%In particular this means that the genericity of Pad\'e universal series on simply connected domains can be obtained only from polynomial approximation, and not necessarily rational approximation any more.

\medskip{}

The paper is organized as follows. In Section \ref{Prelim-Pade}, we recall the main properties of Pad\'e approximants. Section \ref{beh-poles} deals with generic properties of poles of Pad\'e approximants. In Section \ref{part-set}, we study Pad\'e universal series whose Pad\'e approximants have prescribed poles, and use these series to exhibit algebraically generic sets, as well as spaceable sets, of Pad\'e universal series. We also prove that Pad\'e universal series whose Pad\'e approximants have asymptotically prescribed poles are generic.
%The technics used allows us to provide a constructive way to produce Pad\'e universal series whose approximating Pad\'e approximants have dense set of poles. 
In the final section \ref{TtoP}, we show how to build Pad\'e universal series from classical universal series with large Ostrowski gaps, and use this method to recover the genericity of Pad\'e universal series.

\medskip{}

\noindent{}\emph{Notations.} 
%In the sequel we identify a formal power series $f=\sum _{n=0}^{\infty}a_nz^n$ with the sequence $\left(a_n\right)_n$ in $\C^{\N}$. We use the notation $S_N(f)$, $N\in \N$, to denote the $n$-th partial sum $\sum _{n=0}^N a_n z^n$. 
Throughout, we will denote by $[f;p/q]$ the Pad\'e approximant of degree $(p,q)$ of a power series $f$ at the origin. We will also denote by $P_{p,q}(f)$ and $Q_{p,q}(f)$ the numerator and the denominator of $[f;p/q]$, written in irreducible form.
%respectively (see Section \ref{section-poles-denses}).

Let $P=\sum _{k=0}^pa_kz^k$ be a polynomial. The valuation of $P$ is the smallest index $k$ such that $a_k\neq 0$. We denote it $\text{val}(P)$. 
As usual the degree of $P$, denoted by deg$(P)$, is the largest $k$ with $a_k\neq 0$.

%Given a compact set $K$ of $\C$ we denote by $A(K)$ the space consisting of those continuous functions on $K$ which are holomorphic in the interior of $K$, endowed with the topology of uniform convergence given by the supremum norm on $K$ denoted by $\left\Vert \cdot \right\Vert _K$.
Finally, we recall that a subset of a topological space is a $G_{\delta}$ set if it can be expressed as an intersection of countably many open sets. It is residual if it is the intersection of countably many sets with dense interiors. A meagre subset is the complement of a residual set. A property is generic if it holds on a residual set.
%\section{Pad\'e approximants with dense poles}\label{section-poles-denses}
\section{Preliminaries on Pad\'e Approximants}\label{Prelim-Pade}
In this section, we recall the definition of Pad\'e approximants and give some of their properties. Basic references for Pad\'e approximants are e.g. \cite{BGM, GRA, NS, Per}.
\begin{Definition}
Let $S(z)=\sum_{k\geq 0} a_{k}z^{k}$ be a formal power series with complex coefficients.
% with $a_{0}\neq 0$. 
A Pad\'e approximant of type $(m,n)$ of $S$ is a rational function $P_{m}/Q_{n}$, with
$\deg P_{m}\leq m$, $\deg Q_{n}\leq n$, $(P_{m},Q_{n})$ coprime, $Q_{n}(0)=1$
and such that
\begin{equation}\label{Pade}
S(z)-P_{m}/Q_{n}(z)=\OO(z^{m+n+1}).
\end{equation}
The (algebraic) $\OO$ symbol indicates that the expression on the left side is a power series beginning with a power $z^{l}$, $l\geq m+n+1$.
\end{Definition}

Here, it is understood that two polynomials are coprime if they have no common roots. In particular, the pair $(0,1)$ is coprime while the pair $(0,z)$ is not coprime.
Also, we agree to represent the zero function in a unique way as a rational function, namely the rational function $0/1$ where $0$ and $1$ are the constant polynomials of degree 0. 

Note that, if a Pad\'e approximant exists, it can only be unique. Indeed, if we have two solutions $P_{m}/Q_{n}$ and $\tilde{P}_{m}/\tilde{Q}_{n}$ of (\ref{Pade}), then 
$$(\tilde{Q}_{n}P_{m}-Q_{n}\tilde{P}_{m})(z)=\OO(z^{m+n+1}),$$
so that $\tilde{Q}_{n}P_{m}-Q_{n}\tilde{P}_{m}=0$ and the two rational functions are equal. Note also that the assumption $Q_{n}(0)=1$ is not a real restriction since if $Q_{n}(0)=0$ then $P_{m}(z)$ would have to cancel the zeros of $Q_{n}(z)$ at the origin in order that (\ref{Pade}) has a meaning.

The linearized version of (\ref{Pade}) is
\begin{equation}\label{pade}
Q_{n}(z)S(z)-P_{m}(z)=\OO(z^{m+n+1}).
\end{equation}
Because of the assumption $Q_{n}(0)=1$, it is actually equivalent to (\ref{Pade}). The number $n+m+1$ of free coefficients on the left of (\ref{pade}) equals the number of vanishing coefficients on the right. The coefficients of $Q_{n}$ are used to make the coefficients of $z^{m+1},\ldots,z^{m+n}$ in the expansion of $Q_{n}(z)S(z)-P_{m}(z)$ vanish. Then the coefficients of $P_{m}$ are chosen so that the coefficients of $1,\ldots,z^{m}$ vanish. Setting
$Q_{n}(z)=q_{n}z^{n}+\cdots+1$, one gets the linear system of $n$ equations with $n$ unknowns,  
\begin{equation}\label{syst}
\begin{pmatrix}a_{m-n+1} & \ldots & a_{m}\\
\vdots & \vdots & \vdots\\
a_{m} & \ldots & a_{m+n-1}\end{pmatrix}
\begin{pmatrix}q_{n} \\ \vdots \\ q_{1}\end{pmatrix}=-\begin{pmatrix}a_{m+1} \\ \vdots \\ a_{m+n} \end{pmatrix}.
\end{equation}
We denote by $C_{m,n}$ the (Hankel) determinant of the above system,
\begin{equation}\label{Hankel}
C_{m,n}:=\begin{vmatrix}a_{m-n+1} & \ldots & a_{m}\\
\vdots & \vdots & \vdots\\
a_{m} & \ldots & a_{m+n-1}\end{vmatrix}.
\end{equation}
The Pad\'e approximant $P_{m}/Q_{n}$ of $S$, if it exists, admits an explicit expression, originally given by Jacobi, in terms of determinants, namely $P_{m}/Q_{n}$ equals the quotient of the following two polynomials, written in determinantal form as
\begin{equation}\label{deter}
\hat{P}_{m}(z)=\begin{vmatrix}a_{m-n+1} & a_{m-n+2}& \ldots & a_{m+1}\\
\vdots & \vdots & \vdots & \vdots\\
a_{m} & a_{m+1} & \ldots & a_{m+n}\\
\sum\limits_{i=0}^{m-n}a_{i}z^{n+i} & \sum\limits_{i=0}^{m-n+1}a_{i}z^{n+i-1} & \ldots & 
\sum\limits_{i=0}^{m}a_{i}z^{i}
\end{vmatrix},%\quad
\hat{Q}_{n}(z)=\begin{vmatrix}a_{m-n+1} & \ldots & a_{m+1}\\
\vdots & \vdots & \vdots\\
a_{m} & \ldots & a_{m+n}\\
z^{n}& \ldots & 1\end{vmatrix}.
\end{equation}
Indeed, it is easy to verify that the power expansion of $\hat{Q}_{n}S(z)$ has no powers of $z^{l}$, $m+1\leq l\leq m+n$, and that the one of $(\hat{Q}_{n}S-\hat{P}_{m})(z)$ has no powers of $z^{l}$, $0\leq l\leq m$. Hence the linearized version (\ref{pade}) is always satisfied by $\hat{P}_{m}$ and $\hat{Q}_{n}$ and, if the Pad\'e approximant exists, we thus have $P_{m}/Q_{n}=\hat{P}_{m}/\hat{Q}_{n}$.
\begin{proposition}\label{equiv-pade}
Let $S(z)=\sum_{k\geq 0} a_{k}z^{k}$ be a formal power series. The following three assertions are equivalent:\\
(i) The determinant $C_{m,n}$ is nonzero.\\
(ii) The Pad\'e approximant $P_{m}/Q_{n}$ (which, by definition, is irreducible) of type $(m,n)$ of $S$ exists and $\deg P_{m}=m$ or $\deg Q_{n}=n$.\\
(iii) The polynomials $\hat{P}_{m}$ and $\hat{Q}_{n}$ are coprime and $\deg \hat{P}_{m}=m$ or $\deg \hat{Q}_{n}=n$.\\
(iv) The polynomials $\hat{P}_{m}$ and $\hat{Q}_{n}$ are coprime.\\ 
If the above assertions hold true then $\hat{P}_{m}(z)=\hat{Q}_{n}(0)P_{m}(z)$ and $\hat{Q}_{n}(z)=\hat{Q}_{n}(0)Q_{n}(z)$. Moreover, if $C_{m,n}=0$ and the Pad\'e approximant $P_{m}/Q_{n}$ of type $(m,n)$ of $S$ exists, then $\hat{P}_{m}(z)=T(z)P_{m}(z)$ and $\hat{Q}_{n}(z)=T(z)Q_{n}(z)$ where $T$ is a %nonzero 
polynomial such that $T(0)=0$. 
\end{proposition}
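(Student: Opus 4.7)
The plan is to prove the chain (iv) $\Rightarrow$ (i) $\Rightarrow$ (ii) $\Rightarrow$ (iii) $\Rightarrow$ (iv) and then read off the ``moreover'' statements from the step (ii) $\Rightarrow$ (iii). The central preliminary observation, which I establish by expanding the determinant in (\ref{deter}) along its last row at $z=0$, is $\hat{Q}_n(0)=C_{m,n}$. Combined with the linearized identity (\ref{pade}) satisfied by $(\hat{P}_m,\hat{Q}_n)$, this is the only algebraic input needed.

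For (iv) $\Rightarrow$ (i), I argue by contraposition: if $C_{m,n}=0$ then $\hat{Q}_n(0)=0$, hence $z\mid\hat{Q}_n$; inspecting the constant term of (\ref{pade}) then forces $\hat{P}_m(0)=0$, so $z\mid\hat{P}_m$, contradicting coprimality. For (i) $\Rightarrow$ (ii), I solve the Hankel system (\ref{syst}), which is invertible because its determinant is $C_{m,n}\neq 0$, to produce a pair $(P,Q)$ with $Q(0)=1$, $\deg P\leq m$, $\deg Q\leq n$ satisfying (\ref{pade}). The key point is that the kernel of the associated linear system on the coefficients of $Q$ is one-dimensional (dropping the column corresponding to the constant term yields a matrix whose determinant is $\pm C_{m,n}$), so any solution with vanishing constant term must itself be zero. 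Both coprimality of $(P,Q)$ and the maximal degree condition then follow from a uniform ``multiply by $z$'' trick: if $(P,Q)$ admitted a common factor $T$ of positive degree, then $T(0)\neq 0$ (because $Q(0)=1$), so the pair $(zP/T,zQ/T)$ would still solve (\ref{pade}) within the degree bounds $m,n$ but with vanishing constant term, a contradiction; and if both $\deg P<m$ and $\deg Q<n$, then $(zP,zQ)$ would similarly be a nonzero solution vanishing at the origin.

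For (ii) $\Rightarrow$ (iii), I reuse the cross-multiplication already employed in the uniqueness argument of the preamble: combining the linearized relations for $(P_m,Q_n)$ and $(\hat{P}_m,\hat{Q}_n)$ yields $Q_n\hat{P}_m-\hat{Q}_n P_m=\OO(z^{m+n+1})$ with degree at most $m+n$, hence identically zero, so $\hat{P}_m Q_n=\hat{Q}_n P_m$. Coprimality of $(P_m,Q_n)$ then forces $\hat{P}_m=TP_m$ and $\hat{Q}_n=TQ_n$ for some polynomial $T$, and the degree condition $\deg P_m=m$ or $\deg Q_n=n$ immediately forces $\deg T=0$; hence $T$ is a nonzero constant, coprimality and maximal degree transfer to $(\hat{P}_m,\hat{Q}_n)$, and evaluating at $z=0$ identifies $T=\hat{Q}_n(0)=C_{m,n}$, yielding the first ``moreover'' identities. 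The implication (iii) $\Rightarrow$ (iv) is trivial. Finally, the last claim of the proposition is obtained by rerunning the same cross-multiplication under the weaker hypothesis that the Pad\'e approximant merely exists: one still gets $\hat{P}_m=TP_m$ and $\hat{Q}_n=TQ_n$, but now $T(0)=\hat{Q}_n(0)=C_{m,n}=0$. I expect the main obstacle to be the step (i) $\Rightarrow$ (ii), which must simultaneously build the Pad\'e approximant, verify coprimality, and rule out a simultaneous degree drop; the ``$z$-shift plus one-dimensional kernel'' mechanism handles the last two uniformly, but bookkeeping the degree bounds when dividing by $T$ or multiplying by $z$ requires care.
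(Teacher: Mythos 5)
Your overall architecture (the identity $\hat{Q}_n(0)=C_{m,n}$, the contrapositive for (iv)$\Rightarrow$(i), the kernel-plus-$z$-shift argument for (i)$\Rightarrow$(ii), cross-multiplication for the ``moreover'' parts) is close in spirit to the paper's proof, and your (i)$\Rightarrow$(ii) step is in fact a correct, slightly more detailed variant of the paper's $(z+1)$-multiplication/uniqueness argument. But the step (ii)$\Rightarrow$(iii) has a genuine gap. From $\hat{P}_mQ_n=\hat{Q}_nP_m$ and coprimality of $(P_m,Q_n)$ you get $\hat{P}_m=TP_m$, $\hat{Q}_n=TQ_n$, and you then claim that the degree condition ``immediately forces $\deg T=0$, hence $T$ is a nonzero constant.'' That inference only bounds $\deg T$ when $T\neq 0$; it does not exclude $T\equiv 0$, i.e. $\hat{P}_m\equiv\hat{Q}_n\equiv 0$, in which case (iii) fails. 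This degenerate case is not vacuous a priori: $\hat{Q}_n\equiv 0$ happens exactly when the $n\times(n+1)$ coefficient matrix of the homogeneous linearized problem has rank less than $n$ (for instance it occurs for series with enough vanishing coefficients, where the Pad\'e approximant still exists), so under hypothesis (ii) it must be ruled out by an argument — and within your cyclic scheme you cannot invoke (i) at this point, since (ii)$\Rightarrow$(i) is precisely what the cycle is meant to deliver.

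The exclusion is true but requires exactly the kind of ``uniqueness'' argument the paper runs for its (ii)$\Rightarrow$(i) direction. One fix in your own language: if $\hat{Q}_n\equiv 0$, all maximal minors of the $n\times(n+1)$ system vanish, so its kernel has dimension at least $2$ and hence contains a nonzero $Q^*$ with $Q^*(0)=0$; the associated linearized pair $(P^*,Q^*)$ satisfies (\ref{pade}), so cross-multiplication against $(P_m,Q_n)$ and coprimality give $Q^*=WQ_n$, $P^*=WP_m$ with $W(0)=0$, hence $\deg W\geq 1$, contradicting $\deg Q^*\leq n$, $\deg P^*\leq m$ together with the hypothesis $\deg Q_n=n$ or $\deg P_m=m$. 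Alternatively, follow the paper's ordering: prove (ii)$\Rightarrow$(i) first (if $C_{m,n}=0$, the consistent system (\ref{syst}) has two distinct solutions with constant term $1$; both represent the same irreducible fraction $P_m/Q_n$, and the degree condition forces them to be equal, a contradiction), after which $T(0)=\hat{Q}_n(0)=C_{m,n}\neq 0$ rules out $T\equiv 0$ at once and yields the stated identities. With this one point repaired, the rest of your proposal, including the final claim with $T(0)=C_{m,n}=0$ (where $T\equiv 0$ is allowed), is correct.
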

\begin{proof}
We start with the equivalence of (i) and (ii).
if $C_{m,n}\neq 0$ then the system (\ref{syst}) has a unique solution and the Pad\'e approximant exists. If $\deg P_{m}<m$ and $\deg Q_{n}<n$ then e.g. $(z+1)P_{m}(z)$ and $(z+1)Q_{n}(z)$ would also be a solution to (\ref{pade}) contradicting the uniqueness of the solution to the system (\ref{syst}). Conversely, assume that the Pad\'e approximant exists and $\deg P_{m}=m$ or $\deg Q_{n}=n$. If $C_{m,n}=0$, then the system (\ref{syst}) can only have infinitely many solutions. Two rational functions
$P_{m}^{(1)}/Q_{n}^{(1)}$ and $P_{m}^{(2)}/Q_{n}^{(2)}$, obtained from two distinct solutions $Q_{n}^{(1)}\neq Q_{n}^{(2)}$
of (\ref{syst}) represent the same irreducible rational function $P_{m}/Q_{n}$. 
From the assumption that $\deg P_{m}=m$ or $\deg Q_{n}=n$, the polynomials $Q_{n}^{(1)}$ and $Q_{n}^{(2)}$ can only differ from $Q_{n}$ by constant factors, and from
the normalization $Q_{n}^{(1)}(0)=Q_{n}^{(2)}(0)=1$, these constant factors have to be equal to 1. Hence, $Q_{n}^{(1)}=Q_{n}^{(2)}$, which gives a contradiction.
%the polynomials $Q_{n}^{(1)}$ and $Q_{n}^{(2)}$ have to differ by a nonconstant polynomial factor, and the same holds true for $P_{m}^{(1)}$ and $P_{m}^{(2)}$. Hence, $P_{m}/Q_{n}$ is a rational function with
%$\deg P_{m}<m$ and $\deg Q_{n}<n$, contradicting assertion (ii). 
We next show the equivalence of (i) or (ii) with (iii).
 If $C_{m,n}\neq 0$, one has $\hat{Q}_{n}(0)\neq 0$ and $P_{m}/Q_{n}=\hat{P}_{m}/\hat{Q}_{n}$ because $\hat{P}_{m}$ and $\hat{Q}_{n}$ satisfy (\ref{pade}), so that (ii) implies (iii) and $Q_{n}(z)=\hat{Q}_{n}(z)/\hat{Q}_{n}(0)$, $P_{m}(z)=\hat{P}_{m}(z)/\hat{Q}_{n}(0)$. Conversely, if (iii) holds true, then $\hat{Q}_{n}(0)\neq 0$ since otherwise $z$ would be a common factor of $\hat{P}_{m}$ and $\hat{Q}_{n}$. Hence $P_{m}/Q_{n}$ exists and equals $\hat{P}_{m}/\hat{Q}_{n}$. Since both fractions are irreducible, $\deg P_{m}=\deg\hat{P}_{m}$ and $\deg Q_{n}=\deg\hat{Q}_{n}$ and assertion (ii) is satisfied. 
Clearly (iii) implies (iv) and (iv) implies (i) because if $C_{m,n}=0$, we see from the determinantal expressions that $\hat{P}_{m}(0)=\hat{Q}_{n}(0)=0$ which would be a contradiction.

Finally, if $C_{m,n}=0$ and the Pad\'e approximant exists, we have $P_{m}/Q_{n}=\hat{P}_{m}/\hat{Q}_{n}$ because $\hat{P}_{m}$ and $\hat{Q}_{n}$ satisfy (\ref{pade}). Since
%It is also clear from the determinantal expressions that 
$\hat{P}_{m}(0)=\hat{Q}_{n}(0)=0$, we obtain the last assertion of the proposition. 
\end{proof}
Throughout, we denote by $\F$ be the space of formal power series $\sum_{k\geq 0}a_{k}z^{k}$ around the origin. 
\begin{definition}
We define $\DD_{m,n}$ as the subset of power series in $\F$ (or in $H(\Omega)$, according to the context) such that the associated Hankel determinant $C_{m,n}$ does not vanish, and $\NN_{m,n}$ as the subset of $\DD_{m,n}$ of series $S$ such that $\deg Q_{m,n}(S)=n$ (one then says that the degree $(m,n)$ is \emph{normal} for $S$). When considering a sequence of degrees $(m_{k},n_{k})$ indexed by an integer $k\geq 0$, we shall usually simplify the notations to $\DD_{k}$ and $\NN_{k}$. 
\end{definition}
%For a series $S$ around the origin, we shall write $[S;m/n]$ for the Pad\'e approximant of $S$ of type $(m,n)$.

We identify a formal power series $\sum_{k\geq 0}a_{k}z^{k}$ around the origin, with the sequence $(a_{k})_{k}$ in $\C^{\N}$, and we endow $\F$ with the topology of the cartesian product. This topology can be equivalently defined by the family of semi-norms $p_{j}((a_{k})_{k})=|a_{j}|$, $j\geq 0$, and the associated distance
$$d((a_{k})_{k},(b_{k})_{k})=\sum_{k=0}^{\infty}\frac{1}{2^{k}}\frac{|a_{k}-b_{k}|}{1+|a_{k}-b_{k}|},$$
which makes $(\F,d)$ into a complete metric space. 

Let $\Omega$ be a simply connected domain, containing the origin,
%$0\in\Omega\subset\C$, let $H(\Omega)$ be the space of analytic functions in $\Omega$, endowed with the topology of uniform convergence on compact subsets of $\Omega$. 
and let $(K_{k})_{k\geq 0}$ be an exhausting sequence of compact subsets of $\Omega$. We endow the space $H(\Omega)$ with the topology of uniform convergence on the compact sets $K_{k}$, $k\geq 0$. This is a complete metric space, associated with the distance
$$d_{\infty}(f,g)=\sum_{k=0}^{\infty}\frac{1}{2^{k}}\frac{p_{k}(f-g)}{1+p_{k}(f-g)},$$
where $p_{k}$ denotes the semi-norm $p_{k}(f)=\sup_{z\in K_{k}}|f(z)|$. The topology of $H(\Omega)$ is stronger than that of the ambient space $(\F,d)$. 
\begin{proposition}\label{D-N}
The sets $\DD_{m,n}$ and $\NN_{m,n}$ are open and dense subsets of $\F$ and $H(\Omega)$, endowed with their respective topology.
\end{proposition}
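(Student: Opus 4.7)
The plan is to reduce both openness and density to the observation that the zero set of a non-trivial polynomial in the Taylor coefficients is a closed nowhere dense subset of a finite-dimensional affine space.

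\textbf{Openness.} The Hankel determinant $C_{m,n}$ defined in (\ref{Hankel}) is a polynomial in the finitely many coefficients $a_{m-n+1},\ldots,a_{m+n-1}$ (with the convention $a_k=0$ for $k<0$). Each functional $S\mapsto a_k(S)$ is continuous on $(\F,d)$ (it is a defining semi-norm) and on $(H(\Omega),d_{\infty})$ (uniform convergence on a small closed disk about the origin yields convergence of all Taylor coefficients, by Cauchy's formula). Hence $\DD_{m,n}=\{S:C_{m,n}(S)\neq 0\}$ is open in both spaces. To handle $\NN_{m,n}$, I would invoke Proposition~\ref{equiv-pade} together with the determinantal formula (\ref{deter}): on $\DD_{m,n}$, one has $Q_n=\hat{Q}_n/\hat{Q}_n(0)$, with $\hat{Q}_n(0)=C_{m,n}$ by cofactor expansion of (\ref{deter}) along the last row at $z=0$. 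Similarly, the coefficient of $z^n$ in $\hat{Q}_n(z)$ equals, up to sign, the $n\times n$ Hankel determinant $C_{m+1,n}$ obtained by deleting the first column. Therefore $\deg Q_n=n$ iff $C_{m+1,n}\neq 0$, and $\NN_{m,n}=\DD_{m,n}\cap\{C_{m+1,n}\neq 0\}$ is an intersection of two open sets.

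\textbf{Density.} In $\F$, a nonempty basic open set prescribes only finitely many Taylor coefficients within small open disks. Since $C_{m,n}$ is a non-trivial polynomial in the variables $a_j$, its zero locus is a proper algebraic subvariety of the corresponding affine space, hence nowhere dense; one can therefore adjust the relevant $a_j$'s by arbitrarily small amounts to make $C_{m,n}\neq 0$ while staying in the basic open set. In $H(\Omega)$, given $f$ and a neighborhood of $f$ in the topology of uniform convergence on a compact $K\subset\Omega$, I would perturb by a polynomial $\delta P(z)=\delta\sum_k c_k z^k$ with $k$ in the relevant index range. For small $\delta>0$ the perturbation is uniformly small on $K$, while $(c_k)$ can be chosen so that $\delta\mapsto C_{m,n}(f+\delta P)$ is a non-trivial polynomial in $\delta$, hence nonzero for arbitrarily small $\delta>0$. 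The same argument applies verbatim to $C_{m+1,n}$, and both non-vanishing conditions can be secured simultaneously by a single polynomial perturbation; since intersections of open dense sets remain dense, $\NN_{m,n}$ is open and dense as well.

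The main (and rather minor) obstacle is the identification $\NN_{m,n}=\DD_{m,n}\cap\{C_{m+1,n}\neq 0\}$, which hinges on recognizing the leading coefficient of $\hat{Q}_n(z)$ as another Hankel determinant through the cofactor expansion of (\ref{deter}). Once this reduction is in place, everything reduces to the elementary fact that the zero set of a non-trivial polynomial in finitely many complex variables is nowhere dense in $\C^N$.
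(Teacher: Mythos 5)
Your proposal is correct and takes essentially the same route as the paper: both characterize $\DD_{m,n}$ and $\NN_{m,n}$ by the non-vanishing of $C_{m,n}$ (resp.\ $C_{m,n}$ and $C_{m+1,n}$), obtain openness from the continuity of these determinants in finitely many Taylor coefficients (via Cauchy estimates in $H(\Omega)$), and obtain density by small perturbations of the relevant coefficients, using that these determinants are non-trivial polynomials. The only added content is your explicit cofactor identification of $\hat{Q}_{n}(0)=C_{m,n}$ and of the leading coefficient of $\hat{Q}_{n}$ with $\pm C_{m+1,n}$, a detail the paper states without proof.
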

\begin{proof}
The set $\DD_{m,n}$ (resp. $\NN_{m,n}$) is characterized by the fact that the Hankel determinant $C_{m,n}\neq 0$ (resp. $C_{m,n}\neq0$ and $C_{m+1,n}\neq0$).
These Hankel determinants are continuous functions of a finite number of coefficients of $f$ so $\DD_{m,n}$ and $\NN_{m,n}$ are open subsets of $\F$ and $H(\Omega)$. Actually, they are polynomial functions of the coefficients, so, by an analyticity argument, $\DD_{m,n}$ and $\NN_{m,n}$ are also dense in $\F$ and $H(\Omega)$.
The sets $\DD_{m,n}$ and $\NN_{m,n}$, as subsets of $H(\Omega)$, are open in $H(\Omega)$ since all functions in a given neighborhood of a $f\in H(\Omega)$ have, by Cauchy's estimates, Taylor coefficients close to those of $f$. The sets $\DD_{m,n}$ and $\NN_{m,n}$ are also dense in $H(\Omega)$ because if 
$f\in H(\Omega)$ does not belong to $\DD_{m,n}$ or $\NN_{m,n}$, any neighborhood of $f$ contains a set of the form
$$\{f(z)+\sum_{i=m_{k}-n_{k}+1,~i\geq 0}^{m_{k}+n_{k}-1}r_{i}z^{i},~|r_{i}|\leq\epsilon\},$$
for $\epsilon$ small enough, which itself contains an element in $\DD_{m,n}$ or $\NN_{m,n}$.
\end{proof}
In the next section, the following proposition, whose proof is immediate from the definition of Pad\'e approximants and Proposition \ref{equiv-pade}, will be useful.
\begin{proposition}\label{inv-Pade}
Let $S(z)=\sum_{k\geq 0}a_{k}z^{k}$ be a power series with $a_{0}\neq 0$. If the Pad\'e approximant of type $(m,n)$ for $S$ exists, then the Pad\'e approximant of type $(n,m)$ for the reciprocal $1/S$ of $S$ also exists and 
$$[1/S;n/m]=1/[S;m/n].$$ 
Moreover, $S\in\DD_{m,n}$ if and only if $1/S\in\DD_{n,m}$.
\end{proposition}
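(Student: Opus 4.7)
The plan is to transfer the defining relation of $[S;m/n]$ into a defining relation for $[1/S;n/m]$ by direct inversion, and then to invoke the uniqueness of Pad\'e approximants. Let $[S;m/n]=P_m/Q_n$, normalized by $Q_n(0)=1$, with $(P_m,Q_n)$ coprime, $\deg P_m\le m$ and $\deg Q_n\le n$. The linearized relation then reads
$$Q_n(z)S(z)-P_m(z)=\OO(z^{m+n+1}).$$
Evaluating at $z=0$ gives $P_m(0)=a_0\ne 0$, which is precisely where the hypothesis $a_0\ne 0$ enters; in particular, $Q_n/P_m$ is a rational function well defined at the origin, and it is irreducible because $(P_m,Q_n)$ is.

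Next, I would divide the linearized relation by $S(z)P_m(z)$. Since the product $SP_m$ has constant term $a_0^2\ne 0$, its reciprocal is again a power series, and therefore
$$\frac{1}{S(z)}-\frac{Q_n(z)}{P_m(z)}=\frac{P_m(z)-Q_n(z)S(z)}{S(z)P_m(z)}=\OO(z^{n+m+1}).$$
Setting $\tilde P_n:=Q_n/a_0$ and $\tilde Q_m:=P_m/a_0$, one has $\tilde Q_m(0)=1$, $\deg\tilde P_n\le n$, $\deg\tilde Q_m\le m$, and $(\tilde P_n,\tilde Q_m)$ is coprime. The uniqueness of Pad\'e approximants, noted right after the definition in Section \ref{Prelim-Pade}, then yields $[1/S;n/m]=\tilde P_n/\tilde Q_m=Q_n/P_m=1/[S;m/n]$, which is the first part of the statement.

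For the last assertion, I would invoke Proposition \ref{equiv-pade}: $S\in\DD_{m,n}$ is equivalent to the existence of $[S;m/n]$ together with the condition $\deg P_m=m$ or $\deg Q_n=n$. By the previous step, $[1/S;n/m]$ is then the irreducible fraction $Q_n/P_m$, whose numerator has degree $\deg Q_n$ and whose denominator has degree $\deg P_m$, so the very same condition characterizes $1/S\in\DD_{n,m}$. The argument is symmetric in $S$ and $1/S$ (with $1/S$ having nonzero constant term $1/a_0$), so the equivalence follows. I do not anticipate any real obstacle here; the only mild point is the verification that $P_m(0)\ne 0$, which uses the hypothesis $a_0\ne 0$ and legitimizes the inversion step.
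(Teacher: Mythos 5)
Your argument is correct and is precisely the ``immediate'' proof the paper has in mind: the paper gives no written proof, stating only that the result follows from the definition (with its uniqueness) and Proposition \ref{equiv-pade}, which is exactly what you carry out by inverting the linearized relation (using $P_m(0)=a_0\neq 0$) and then transferring the degree condition of Proposition \ref{equiv-pade}(ii) between $P_m/Q_n$ and $Q_n/P_m$. No gaps.
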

\section{Behavior of poles of Pad\'e approximants}\label{beh-poles}
Our first result states that, generically, the poles (or the zeros) of the Pad\'e approximants of a given series form a dense subset of $\C$. One may interpret this property as a pathological behavior of Pad\'e approximation.
\begin{theorem}\label{dense-poles}
Let $\MS=(m_{k},n_{k})_{k\geq 0}$ be a sequence of pairs of positive integers such that $(m_{k}+n_{k})_{k\geq 0}$ is unbounded. Let $\F^{\PP}_{\MS}$ (resp. 
$\F^{\ZZ}_{\MS}$) denote the subset of power series $f$ of $\F$ such that 
$f\in\NN_{m_{k},n_{k}}$ for all $k\geq 0$ and the set consisting of the poles (resp. zeroes) of all Pad\'e approximants $[f;m_{k}/n_{k}]$, $k\geq 0$, is dense in $\C$. 
Then,\\
(i) The set $\F^{\PP}_{\MS}$ is a 
dense $G_{\delta}$ (hence residual)
subset of $(\F,d)$.\\
(ii) The set $\F^{\ZZ}_{\MS}$ is also a 
dense $G_{\delta}$
subset of $(\F,d)$.
\end{theorem}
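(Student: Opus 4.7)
The plan is to express $\F^{\PP}_{\MS}$ as a countable intersection of dense open subsets of the complete metric space $(\F,d)$ and then invoke the Baire category theorem. Fix a countable family $(B_i)_{i\geq 1}$ of open disks forming a basis for the topology of $\C$, for instance $B_i = D(w_{j(i)},1/l(i))$ with $\{w_j\}$ an enumeration of $\Q+i\Q$. Set $\NN := \bigcap_{k\geq 0}\NN_{m_k,n_k}$, which is a dense $G_\delta$ subset of $\F$ by Proposition~\ref{D-N}. For each $i\geq 1$, let
\[
V_i := \bigcup_{k\geq 0}\bigl\{f\in\NN_{m_k,n_k} : Q_{m_k,n_k}(f)\text{ has a root in }B_i\bigr\}.
\]
Then $\F^{\PP}_{\MS}=\NN\cap\bigcap_{i\geq 1}V_i$, so it suffices to show each $V_i$ is open and dense in $\F$. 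Openness is immediate: on the open set $\NN_{m_k,n_k}$ the coefficients of $Q_{m_k,n_k}(f)$ are rational functions of finitely many Taylor coefficients of $f$ with nonvanishing denominator $C_{m_k,n_k}$, and $\deg Q_{m_k,n_k}(f)=n_k$ throughout, so the roots vary continuously with $f$ and having at least one root in the open disk $B_i$ is an open condition.

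The main step is the density of $V_i$. Given $f\in\F$ and $\epsilon>0$, first choose $N$ so that any $g\in\F$ whose first $N$ Taylor coefficients agree with those of $f$ satisfies $d(f,g)<\epsilon$. Since $(m_k+n_k)_k$ is unbounded, I select $k$ with $m_k+n_k\geq N$ and a point $w\in B_i\setminus\{0\}$. The aim is to construct an irreducible rational function $R=P^*/Q^*$ of type $(m_k,n_k)$ satisfying $\deg Q^*=n_k$, $Q^*(0)=1$, $Q^*(w)=0$ and $R-f=\OO(z^N)$. Setting $f':=R$ then yields $f'\in\NN_{m_k,n_k}$, $[f';m_k/n_k]=R$ has a pole at $w\in B_i$, and $d(f,f')<\epsilon$, whence $f'\in V_i$. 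Writing $Q^*(z)=1+q_1z+\cdots+q_{n_k}z^{n_k}$, the condition $Q^*(w)=0$ together with the vanishing of the $z^j$-coefficient of $Q^*(z)f(z)$ for $j=m_k+1,\dots,N-1$ forms a linear system of $N-m_k$ equations in the $n_k$ unknowns $q_1,\dots,q_{n_k}$; once solved, one sets $P^*$ to be the degree-$m_k$ truncation of $Q^*f$, whereupon $R-f=\OO(z^N)$ follows from the fact that $Q^*$ is a unit at $0$. The inequality $n_k\geq N-m_k$ guarantees solvability generically in $f$, and in the degenerate cases one first perturbs $f$ by arbitrarily small amounts in finitely many Taylor coefficients of index $<N$ to escape the exceptional Zariski-closed locus without affecting $d$-proximity. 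I expect the main technical point to be ensuring additionally that a solution can be chosen with $\deg Q^*=n_k$ exactly and $\gcd(P^*,Q^*)=1$, i.e.\ that $R\in\NN_{m_k,n_k}$; both conditions are generic within the solution set and are preserved under infinitesimal adjustment of the free parameters.

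Combining this with Baire's theorem yields part~(i). For part~(ii), I would repeat the argument verbatim with the roles of numerator and denominator interchanged: construct rationals $R=P^*/Q^*$ of type $(m_k,n_k)$ whose numerator $P^*$ vanishes at some $w\in B_i$ instead of the denominator, imposing the condition $P^*(w)=0$ in the linear system rather than $Q^*(w)=0$. The dimension count and linear-algebra analysis are fully symmetric, and the same Baire argument shows that $\F^{\ZZ}_{\MS}$ is likewise a dense $G_\delta$ subset of $(\F,d)$.
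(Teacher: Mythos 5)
Your global architecture --- writing $\F^{\PP}_{\MS}$ as $\bigl(\bigcap_{k}\NN_{m_k,n_k}\bigr)\cap\bigcap_{i}V_i$ and invoking Baire, with openness of $V_i$ coming from the continuity of the Pad\'e denominator on $\NN_{m_k,n_k}$ --- is exactly the paper's, and that part of your proposal is correct. The genuine gap is in the density of $V_i$, which is where all the work lies. You reduce it to an affine linear system in $(q_1,\dots,q_{n_k})$ (the interpolation conditions of orders $m_k+1,\dots,N-1$ together with $Q^*(w)=0$) and then assert (a) generic solvability, and (b) that $\deg Q^*=n_k$ and $\gcd(P^*,Q^*)=1$ are ``generic within the solution set and preserved under infinitesimal adjustment of the free parameters.'' Claim (b) is not established and cannot be salvaged in this form: if $n_k=N-m_k$ (which your choice of $k$ permits, since you only impose $m_k+n_k\geq N$) the solution set is generically a single point, so there are no free parameters to adjust; and even when $n_k>N-m_k$, the affine solution set could a priori be entirely contained in $\{q_{n_k}=0\}$ or in the resultant locus for data near the given $f$, so genericity requires proving that these exceptional loci are \emph{proper}, i.e.\ exhibiting, for every admissible $(m_k,n_k,N,w)$, data compatible with the first $N$ coefficients of $f$ (up to small perturbation) for which a solution with exact denominator degree and coprime numerator exists. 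The same remark applies to (a): perturbing ``to escape the exceptional Zariski-closed locus'' presupposes that the full-rank locus is nonempty, which you do not verify (it is true, but needs a witness). Producing such witnesses is precisely the content of the paper's proof: it takes the truncation of $f$ of degree $m_k-1$ and adds two small coefficients at positions $m_k-1+n_k$ and $m_k+n_k$, for which the Hankel determinant equals $f_{m_k-1+n_k}a_{m_k-1}^{n_k-1}\neq 0$ and the Pad\'e denominator is the explicit expression \eqref{Pade-den}, whose root can be placed at any prescribed point of $V_j$; coprimality and the pole interpretation then come for free from Proposition \ref{equiv-pade}. As written, your density step defers this essential construction to an unproved genericity claim.

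Two further remarks. Your formulation has the potential advantage of treating the hypothesis ``$(m_k+n_k)_k$ unbounded'' symmetrically in one stroke, whereas the paper splits into the cases $(m_k)_k$ unbounded and $(m_k)_k$ bounded, handling the latter through the reciprocal map $f\mapsto 1/f$ and Proposition \ref{inv-Pade} on the open dense set $\F_0$ of series with nonzero constant term; if you supplied the witnesses above uniformly in $(m_k,n_k)$ you would avoid that case distinction (note the paper's explicit witness needs a polynomial of exact degree $m_k-1$, hence $m_k$ unbounded, which is exactly why the second case is treated separately there). Finally, a smaller point for part (ii): besides $\deg Q^*=n_k$ and coprimality, you also need $Q^*(w)\neq 0$ so that the zero at $w$ survives reduction; this is the same kind of nondegeneracy condition that your sketch leaves unjustified.
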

\begin{remark}
For the row sequence $(m,1)_{m\geq 0}$, an explicit series, with a positive radius of convergence and which satisfies assertion (i), is given in \cite[\S 45]{Per}. For the diagonal sequence $(n,n)_{n\geq 0}$, an example is also given in \cite{Wal}.
\end{remark}
\begin{proof}
%We directly prove assertion (ii) and first assume that 
We consider two cases:\\
{\bf First case: $(m_{k})_{k\geq 0}$ is unbounded.}
%Let $\FF_{k}\subset\F$, $k\geq 0$, be the subset of series $f$ such that $f\in\DD_{m_{k},n_{k}}$. 
We know from Proposition \ref{D-N} that $\NN_{k}$ is an open and dense subset of $\F$.
Next, let $(V_{j})_{j\geq 0}$ be a denumerable basis of open sets of $\C$ and set
\begin{equation}\label{def-Fj}
F_{\MS,j}=\{f\in\F,~\exists k\geq 0,~f\in\DD_{k}\text{ and }[f;m_{k}/n_{k}]\text{ has a pole in }V_{j}\}.
\end{equation}
Then 
$$\F^{\PP}_{\MS}=(\bigcap_{k\geq 0}\NN_{k})\cap(\bigcap_{j\geq 0}F_{\MS,j}),$$ 
and, by Baire's theorem, it suffices to show that each $F_{\MS,j}$ is open and dense. The fact that $F_{\MS,j}$ is open is clear since the Hankel determinant (\ref{Hankel}) and the denominator of the Pad\'e approximants in (\ref{deter}) are continuous functions of a finite number of coefficients of the power series. Let $\P$ be the set of polynomials and
$$\P_{\MS}=\{P\in\P,~\exists k\geq 0,~\deg P=m_{k}-1\}.$$
From the assumption that the sequence $(m_{k})_{k\geq 0}$ is unbounded, we know that $\P_{\MS}$ is dense in $(\F,d)$. To prove that $F_{\MS,j}$ is dense in $\F$, it is thus sufficient to show that, for a given $P\in\P_{\MS}$, there is a $f\in F_{\MS,j}$ close to $P$. Assume 
$$P(z)=a_{0}+\ldots+a_{m_{k}-1}z^{m_{k}-1},\qquad a_{m_{k}-1}\neq 0,$$
and consider the series (actually the polynomial)
\begin{equation}\label{def-f}
f(z)=P(z)+f_{m_{k}-1+n_{k}}z^{m_{k}-1+n_{k}}+f_{m_{k}+n_{k}}z^{m_{k}+n_{k}}\in\F,\end{equation}
where $f_{m_{k}-1+n_{k}}$ and $f_{m_{k}+n_{k}}$ are small and non vanishing coefficients.
We show that $f_{m_{k}-1+n_{k}}$ and $f_{m_{k}+n_{k}}$ can be chosen so that $f\in F_{\MS,j}$. Indeed, $f\in\DD_{k}$ because the determinant
$$C_{m_{k},n_{k}}=\begin{vmatrix}a_{m_{k}-n_{k}+1} & \ldots & a_{m_{k}-1} & 0\\
\vdots & \iddots & \iddots & \vdots\\
a_{m_{k}-1} & \iddots & 0 & 0\\
0 & \ldots & 0 & f_{m_{k}-1+n_{k}}\end{vmatrix}=f_{m_{k}-1+n_{k}}a_{m_{k}-1}^{n_{k}-1}$$
is non vanishing. Moreover, according to the second formula in (\ref{deter}), the denominator of the Pad\'e approximant $[f;m_{k}/n_{k}]$ equals
$$\begin{vmatrix}a_{m_{k}-n_{k}+1} & \ldots & a_{m_{k}-1} & 0 & 0\\
\vdots & \iddots & \iddots & \vdots & \vdots\\
a_{m_{k}-1} & \iddots & 0 & 0 & f_{m_{k}-1+n_{k}}\\
0 & \ldots & 0 & f_{m_{k}-1+n_{k}} & f_{m_{k}+n_{k}}\\
z^{n_{k}} & \ldots & \ldots & z & 1
\end{vmatrix}.$$
Expanding this determinant along its last column, we readily obtain the expression
\begin{equation}\label{Pade-den}
-f_{m_{k}-1+n_{k}}^{2}P_{n_{k}-2}(z)z^{2}
-f_{m_{k}+n_{k}}a_{m_{k}-1}^{n_{k}-1}z
+f_{m_{k}-1+n_{k}}a_{m_{k}-1}^{n_{k}-1},
\end{equation}
where $P_{n_{k}-2}(z)$ is some polynomial of degree $n_{k}-2$ which depends only on the coefficients of $P(z)$. Note that $P_{n_{k}-2}(z)=0$ if $n_{k}=1$. It is clear that the nonzero coefficients $f_{m_{k}-1+n_{k}}$ and $f_{m_{k}+n_{k}}$ can be chosen as small as desired and such that (\ref{Pade-den}) vanishes at a given point (distinct from the origin) of $V_{j}$. This shows that $F_{\MS,j}$ is dense in $\F$ and finishes the proof of assertion (i) when $(m_{k})_{k\geq 0}$ is unbounded.

The above reasoning can be repeated for the set of zeroes of the Pad\'e approximants. Indeed, using the first formula in 
(\ref{deter}), which gives the numerator of the Pad\'e approximant, one just has to replace (\ref{Pade-den}) with
\begin{equation}\label{Pade-num}
-f_{m_{k}-1+n_{k}}^{2}R_{m_{k}-2}(z)z^{2}
-f_{m_{k}+n_{k}}a_{m_{k}-1}^{n_{k}-1}P(z)
+f_{m_{k}-1+n_{k}}a_{m_{k}-1}^{n_{k}-1}\tilde P(z),
\end{equation}
where $R_{m_{k}-2}(z)$ is some polynomial of degree $m_{k}-2$ which depends only on the coefficients of $P(z)$, and $\tilde P(z)=P(z)$ if $n_{k}>1$ and $\tilde P(z)=P(z)+f_{m_{k}}z^{m_{k}}$ if $n_{k}=1$. One can always find a point $z_{j}$ in $V_{j}$ such that $P(z_{j})\neq 0$. Hence, two nonzero coefficients $f_{m_{k}-1+n_{k}}$ and $f_{m_{k}+n_{k}}$ can be chosen, sufficiently small and such that (\ref{Pade-num}) vanishes at $z_{j}\in V_{j}$. This shows that the set of series having a Pad\'e approximant with a zero in $V_{j}$ is dense in $\F$. 

{\bf Second case: $(m_{k})_{k\geq 0}$ is bounded.} Then $(n_{k})_{k\geq 0}$ is unbounded. We denote by $\tilde\MS$ the sequence $(n_{k},m_{k})_{k\geq 0}$ and by $\F_{0}$ the subset of $\F$ of power series with non vanishing constant coefficient $a_{0}$. It is an open and dense subset of $\F$. We also set
$$%\begin{equation}\label{def-Fj}
\tilde F_{\MS,j}=\{f\in\F,~\exists k\geq 0,~f\in\DD_{m_{k},n_{k}}\text{ and }[f;m_{k}/n_{k}]\text{ has a zero in }V_{j}\}.
%\end{equation}.
$$ 
It is an open subset of $\F$. To show it is dense, it suffices to check that $\F_{0}\cap \tilde F_{\MS,j}$ is dense in $\F_{0}$. This follows from the facts that \\
(i)
$\F_{0}\cap\tilde F_{\MS,j}=(\F_{0}\cap F_{\tilde\MS,j})^{-1}:=\{1/f,~f\in \F_{0}\cap F_{\tilde\MS,j}\}$, \\
(ii)
$\F_{0}\cap F_{\tilde\MS,j}$ is dense in $\F_{0}$, \\
(iii)
the map which takes the inverse of a function is a homeomorphism in $\F_{0}$. 
\\
The first fact is a consequence of Proposition \ref{inv-Pade}. The second fact follows from the first part of the proof, where we note, in the definition (\ref{def-f}) of $f$, that $P\in\F_{0}$ implies $f\in\F_{0}$. For the last fact, we recall that the coefficients $(b_{n})_{n\geq 0}$ of the reciprocal of a series with coefficients $(a_{n})_{n\geq 0}$ are given by the recursive formulae
$$b_{0}=a_{0}^{-1},\quad b_{n}=-a_{0}^{-1}\sum_{i=1}^{n}a_{i}b_{n-i},\quad n\geq 1.$$
Since each $\tilde F_{\MS,j}$ is open and dense, we derive that $\F^{\ZZ}_{\MS}$ is a dense $G_{\delta}$ subset of $(\F,d)$. The fact that $\F^{\PP}_{\MS}$ is a dense $G_{\delta}$ subset is proved in a similar way.
The assertions of the theorem
%for the case of the space $(\F,d)$ 
are thus obtained.
\end{proof}
The functions in $H(\Omega)$ which have a reciprocal in $H(\Omega)$ are exactly the non vanishing ones. Such functions are not dense in $H(\Omega)$. Hence, in this space, we have to assume that the sequence $\MS$ of indices is such that $(m_{k})_{k\geq 0}$ is unbounded.
\begin{theorem}\label{dense-poles-ana}
Let $\MS=(m_{k},n_{k})_{k\geq 0}$ be a sequence of pairs of positive integers such that $(m_{k})_{k\geq 0}$ is unbounded.
The assertions of Theorem \ref{dense-poles} remain true in the space $H(\Omega)$ endowed with the topology of uniform convergence on compact subsets of $\Omega$.
\end{theorem}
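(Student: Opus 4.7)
The plan is to follow the structure of the \emph{first case} of the proof of Theorem \ref{dense-poles}, adapting it to the stronger topology of $H(\Omega)$. Since $(m_k)_{k\geq 0}$ is unbounded, the dichotomy that forced the use of reciprocals in $\F$ does not arise here, so no analogue of the "Second case" is needed. Writing $\NN_k := \NN_{m_k,n_k}$ and, for a countable basis $(V_j)_{j\geq 0}$ of open sets of $\C$, keeping the definition
\[
F_{\MS,j}=\{f\in H(\Omega),~\exists k\geq 0,~f\in\DD_{k}\text{ and }[f;m_{k}/n_{k}]\text{ has a pole in }V_{j}\},
\]
one has $\F^{\PP}_{\MS}\cap H(\Omega)=(\bigcap_{k}\NN_{k})\cap(\bigcap_{j}F_{\MS,j})$, and it suffices by Baire's theorem to show that each $F_{\MS,j}$ is open and dense in $H(\Omega)$; the density of the $\NN_k$ in $H(\Omega)$ is already Proposition \ref{D-N}.

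Openness is immediate: by Cauchy's estimates, the Taylor coefficients $a_0,\ldots,a_{m_k+n_k}$ of $f$ depend continuously on $f\in H(\Omega)$, and the Hankel determinant $C_{m_k,n_k}$ together with the coefficients of the Pad\'e denominator in (\ref{deter}) are polynomial in these coefficients; hence the condition "$f\in\DD_k$ and the denominator of $[f;m_k/n_k]$ vanishes somewhere in $V_j$" defines an open subset of $H(\Omega)$.

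The core of the argument is density. Given $f\in H(\Omega)$ and a basic neighborhood $U$ of $f$ determined by closeness $<\varepsilon$ on some compact $K_N$, the simple connectedness of $\Omega$ and Runge's theorem provide a polynomial $P$ with $\sup_{K_N}|f-P|<\varepsilon/3$. Choose $k$ so large that $m_k-1\geq\deg P+1$ (possible because $(m_k)$ is unbounded); adding a suitably small multiple of $z^{m_k-1}$ if necessary, we may assume $P(z)=a_0+\cdots+a_{m_k-1}z^{m_k-1}$ with $a_{m_k-1}\neq 0$ and $\sup_{K_N}|f-P|<\varepsilon/2$. Now define, as in (\ref{def-f}),
\[
g(z)=P(z)+f_{m_k-1+n_k}z^{m_k-1+n_k}+f_{m_k+n_k}z^{m_k+n_k},
\]
which is a polynomial, hence lies in $H(\Omega)$. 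Exactly the computation of (\ref{Pade-den}) shows that, for any choice of the two nonzero coefficients, $g\in\DD_k$ and the denominator of $[g;m_k/n_k]$ equals
\[
-f_{m_k-1+n_k}^{2}P_{n_k-2}(z)z^{2}-f_{m_k+n_k}a_{m_k-1}^{n_k-1}z+f_{m_k-1+n_k}a_{m_k-1}^{n_k-1}.
\]
Pick a point $z_j\in V_j\setminus\{0\}$ and solve for $f_{m_k+n_k}$ in terms of $f_{m_k-1+n_k}$ so this polynomial vanishes at $z_j$. Then let $f_{m_k-1+n_k}\to 0$: both coefficients tend to $0$, and since $m_k,n_k$ are now fixed, so do $\sup_{K_N}|f_{m_k-1+n_k}z^{m_k-1+n_k}|$ and $\sup_{K_N}|f_{m_k+n_k}z^{m_k+n_k}|$. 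Choosing $f_{m_k-1+n_k}$ small enough yields $g\in U\cap F_{\MS,j}$, which proves density.

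The main obstacle I anticipate is exactly the last point: in $(\F,d)$ smallness of a single coefficient was essentially free, whereas in $H(\Omega)$ the monomials $z^{m_k+n_k}$ may be enormous on $K_N$; the remedy is simply that $k$ (and hence the degree of these monomials) has already been fixed before choosing the size of the coefficients, so their $K_N$-norms are finite constants that can be absorbed by shrinking $f_{m_k-1+n_k}$. For assertion (ii), the argument is identical, replacing (\ref{Pade-den}) by the numerator expression (\ref{Pade-num}) and choosing $z_j\in V_j$ with $P(z_j)\neq 0$ (possible since $P\not\equiv 0$ once $a_{m_k-1}\neq 0$). Baire's theorem then concludes the proof in $H(\Omega)$.
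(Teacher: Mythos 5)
Your proposal is correct and follows essentially the same route as the paper: the paper's proof of Theorem \ref{dense-poles-ana} simply observes that the first case of the proof of Theorem \ref{dense-poles} carries over, with $\P_{\MS}$ dense in $H(\Omega)$ by Runge's theorem and the perturbing coefficients $f_{m_k-1+n_k}$, $f_{m_k+n_k}$ chosen small enough to control $d_\infty(P,f)$, which is exactly your argument (you merely spell out the openness via Cauchy's estimates and the absorption of the fixed monomial norms on the compact set).
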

\begin{proof}
The proof of Theorem \ref{dense-poles}, except for the second part where it is assumed that $(m_{k})_{k\geq 0}$ is bounded, can be repeated.
Indeed, $\NN_{k}$ and $F_{\SS,j}$ are open in $H(\Omega)$. Moreover,
from Runge's theorem, we know that $\P$ is dense in $H(\Omega)$ and the same holds true for $\P_{\MS}$. The sequel of the proof, showing that $F_{\SS,j}$ is dense in $H(\Omega)$, remains unchanged, where we remark that $d_{\infty}(P,f)$ can be made as small as we want by choosing the coefficients $f_{m_{k}-1+n_{k}}$ and $f_{m_{k}+n_{k}}$ sufficiently small.
\end{proof}
The space $\F$ can also be endowed with the product topology, where each copy of $\C$ is given the discrete topology. This topology is equivalently defined from the distance
$$\tilde d((a_{k})_{k},(b_{k})_{k})=2^{-j}\text{ with }\left\{
\begin{array}{ll} j=\inf\{k\geq 0,~a_{k}\neq b_{k}\}\text{ if }(a_{k})_{k}\neq (b_{k})_{k},
\\[10pt]
j=\infty\text{ if }(a_{k})_{k}=(b_{k})_{k}.\end{array}\right.$$
The space $(\F,\tilde d)$ is a complete metric space, whose topology is stronger than that of the distance $d$. Note that in a given neighbourhood of a series $S(z)=\sum_{k\geq 0}a_{k}z^{k}$, all series share with $S$ a certain number of its first coefficients $a_{0},a_{1},\ldots$. As in $H(\Omega)$, the subset of series which admit a reciprocal, i.e. with a nonzero constant coefficient, is not dense. Moreover, the sets $\NN_{k}$ in the proof of Theorem \ref{dense-poles} are still open in $(\F,\tilde d)$ but no more dense.
Hence, we only have a version of Theorem \ref{dense-poles-ana} in $(\F, \tilde d)$ which holds for subsequences, namely:
\begin{theorem}
Let $\MS=(m_{k},n_{k})_{k\geq 0}$ be a sequence of pairs of positive integers such that $(m_{k})_{k\geq 0}$ is unbounded.
%There exists an infinite sequence $I\subset\N$ and a power series $f\in\F$ such that $f\in\DD_{m_{k},n_{k}}$ for all $k\in I$ and the poles of the Pad\'e approximants $[f;m_{k}/n_{k}]$, $k\in I$, are dense in $\C$. 
The subset $\hat\F_{\MS}^{\PP}$ (resp. $\hat\F_{\MS}^{\ZZ}$) of power series $f$ of $\F$ such that there exists an infinite sequence $I\subset\N$ with $f\in\DD_{k}$ for all $k\in I$ and the poles (resp. zeroes) of the Pad\'e approximants $[f;m_{k}/n_{k}]$, $k\in I$, are dense in $\C$,
is a %residual
dense $G_{\delta}$ 
subset of $(\F,\tilde d)$.
\end{theorem}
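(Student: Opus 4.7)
The plan is to rewrite $\hat\F_\MS^\PP$ as an explicit countable intersection of open sets and apply Baire's theorem in the complete space $(\F,\tilde d)$. The essential constraint imposed by this topology is that a neighbourhood of $f$ of radius $2^{-N}$ consists of series that agree with $f$ on their first $N$ Taylor coefficients, so the density argument must use perturbations that only touch coefficients of index $\geq N$. This is exactly what the assumption that $(m_k)_k$ is unbounded makes possible, because the coefficient perturbations used in the first case of the proof of Theorem~\ref{dense-poles} only affect the indices $m_k-1$, $m_k-1+n_k$ and $m_k+n_k$, which can be taken arbitrarily large.

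Fix a countable basis $(V_j)_{j\geq 0}$ of open sets of $\C$, and set
$$B_{j,n}^\PP:=\{f\in\F:\exists\, k\geq n,\ f\in\DD_k\text{ and }[f;m_k/n_k]\text{ has a pole in }V_j\}.$$
The first step is to establish the identity $\hat\F_\MS^\PP=\bigcap_{j,n\geq 0}B_{j,n}^\PP$. For $(\supseteq)$, given $f$ in the right-hand side, each set $K_j:=\{k\geq 0:f\in\DD_k\text{ and }[f;m_k/n_k]\text{ has a pole in }V_j\}$ is infinite, and a diagonal selection $k_j\in K_j$ with $k_j>k_{j-1}$ produces an infinite $I=\{k_j\}_j$ that witnesses $f\in\hat\F_\MS^\PP$. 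The second step is that each $B_{j,n}^\PP$ is open: the set $\DD_k$ is defined by the non-vanishing of the polynomial $C_{m_k,n_k}$ in (\ref{Hankel}) in finitely many Taylor coefficients of $f$, and on $\DD_k$ the denominator of $[f;m_k/n_k]$ is proportional to the polynomial $\hat Q_{n_k}$ from (\ref{deter}), whose zeros depend continuously on the finitely many coefficients of $f$ involved.

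The third step is to prove that each $B_{j,n}^\PP$ is dense in $(\F,\tilde d)$. Given $f=\sum f_k z^k$ and $N\geq 0$, I choose $k\geq n$ with $m_k>N$ (possible by the unboundedness of $(m_k)$) and define a perturbation $g$ of $f$ by setting $g_\ell=f_\ell$ for $\ell<m_k-1$; taking $g_{m_k-1}$ to be a small nonzero value (equal to $f_{m_k-1}$ if nonzero, otherwise a small modification, which is admissible since $m_k-1\geq N$); setting $g_\ell=0$ for $m_k\leq\ell<m_k-1+n_k$ and for $\ell>m_k+n_k$; and finally choosing the two nonzero parameters $g_{m_k-1+n_k}$ and $g_{m_k+n_k}$ small enough so that the explicit expression (\ref{Pade-den}) for the denominator of $[g;m_k/n_k]$ vanishes at a prescribed point of $V_j$, exactly as in the first case of the proof of Theorem~\ref{dense-poles}. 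All modifications are at indices $\geq N$, so $\tilde d(g,f)\leq 2^{-N}$, and Baire's theorem then yields that $\hat\F_\MS^\PP$ is a dense $G_\delta$ subset of $(\F,\tilde d)$. The argument for $\hat\F_\MS^\ZZ$ is identical, using the numerator formula (\ref{Pade-num}) in place of (\ref{Pade-den}) and choosing the target point $z_j\in V_j$ outside the finite zero set of the auxiliary polynomial $P(z)=g_0+\cdots+g_{m_k-1}z^{m_k-1}$.

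The main delicate point is the reverse inclusion $(\subseteq)$ in the first step. For $f\in\hat\F_\MS^\PP$ with witness $I$ and a basic open set $V_j$, the set of poles of the approximants $[f;m_k/n_k]$, $k\in I$, is dense in $\C$, so its intersection with $V_j$ is dense in $V_j$, and in particular infinite since $V_j$ is open and nonempty. Since each Pad\'e approximant has at most $n_k$ poles, a finite set of indices in $I$ could only contribute finitely many points in $V_j$; hence infinitely many $k\in I$ must contribute, and in particular there exist arbitrarily large $k\in I$ (hence with $k\geq n$) satisfying $f\in\DD_k$ and $[f;m_k/n_k]$ having a pole in $V_j$. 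This gives $f\in B_{j,n}^\PP$ for all $j,n$, completing the equality and the proof.
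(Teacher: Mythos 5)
Your proposal is correct and follows essentially the same route as the paper: write the set as a countable intersection of sets indexed by a basis $(V_j)_j$ of open sets, observe that membership in each such set only involves finitely many Taylor coefficients (hence openness for $\tilde d$), and prove density by truncating the given series and adjoining the two high-index coefficients exactly as in the first case of the proof of Theorem \ref{dense-poles}, the point being that all modified indices are large so the $\tilde d$-distance is small. Your extra index $n$ forcing $k\geq n$, the explicit verification of both inclusions in the set identity, and the adjustment of the coefficient at index $m_k-1$ when the truncation fails to have exact degree $m_k-1$ are careful elaborations of details the paper leaves implicit, not a different method.
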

\begin{proof}
Now, we have $\hat\F_{\MS}^{\PP}=\cap_{j\geq 0}F_{\SS,j}$ where $F_{\SS,j}$ is defined as in (\ref{def-Fj}).
Each set $F_{\SS,j}$ is open with respect to $\tilde d$ because the conditions for a series $f$ to belong to $F_{\SS,j}$ only involve a finite number of its coefficients. To show that $F_{\SS,j}$ is dense in $\F$, we consider a given series $f_{0}$ in $\F$ and first pick a polynomial $P\in\P_{\MS}$ sufficiently close to it, namely a truncation of $f_{0}$ of degree $m_{k}-1$ so that $\tilde d(f_{0},P)=2^{-m_{k}}$ is small enough, and then construct $f$ as in the proof of Theorem \ref{dense-poles}. It still shows the density of $F_{\SS,j}$ because $\tilde d(P,f)=2^{-(m_{k}-1+n_{k})}\leq 2^{-m_{k}}$ is also small. The assertion about $\hat\F_{\MS}^{\ZZ}$ is also proved as in Theorem \ref{dense-poles}.
\end{proof}
The above results show that the usual behavior of Pad\'e approximants is, in a way, wild.
In an opposite direction, and in connection with Baker's conjecture, a generic convergence result for the Pad\'e approximants of entire functions was obtained in \cite{BOR}. It was recently extended to arbitrary domains and also in other ways in \cite{FNes}. Here, we state a version for a simply connected domain $\Omega$ which is slightly more precise than {\cite[Theorem 3.7]{FNes}}.
\begin{theorem}\label{F-N}
Let $(m_{k},n_{k})_{k\geq 0}$ be a sequence of pairs of positive integers such that $(m_{k})_{k\geq 0}$ is unbounded. 
The subset $\hat H(\Omega)$ of functions $f$ in $H(\Omega)$ such that there exists an infinite sequence $I\subset\N$ with $f\in\NN_{k}$ for all $k\in I$ and the Pad\'e approximants $[f;m_{k}/n_{k}]$, $k\in I$, tend to $f$ in $H(\Omega)$ as $k\to\infty$,
is a %residual
dense $G_{\delta}$ 
subset of $H(\Omega)$.
\end{theorem}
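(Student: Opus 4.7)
My plan is to express $\hat H(\Omega)$ as a countable intersection of open dense subsets of $H(\Omega)$ and apply Baire's theorem. Let $(K_\ell)_{\ell \geq 0}$ be the exhausting sequence of compacts of $\Omega$ defining the topology. For integers $\ell \geq 0$, $s \geq 1$, $N \geq 0$, set
\[
U_{\ell, s, N} = \bigcup_{k \geq N} \Bigl\{ f \in \NN_k : [f; m_k/n_k] \text{ has no pole in } K_\ell \text{ and } \sup_{K_\ell}|f - [f; m_k/n_k]| < 1/s \Bigr\}.
\]
A standard diagonal extraction gives $\hat H(\Omega) = \bigcap_{\ell, s, N} U_{\ell, s, N}$. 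Each $U_{\ell, s, N}$ is open: on $\NN_k$ (open by Proposition \ref{D-N}), the determinantal formulas (\ref{deter}) together with Cauchy's estimates show that the rational function $[f; m_k/n_k]$, and in particular its zeros and poles, depend continuously on $f$, so both the pole-free condition on $K_\ell$ and the supremum bound are open conditions.

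For density, fix $f_0 \in H(\Omega)$ and a neighborhood $W$ of $f_0$. Using Runge's theorem, pick a polynomial $P_0 \in W$ close to $f_0$, and, since $(m_k)_k$ is unbounded, choose $k \geq N$ with $m_k - 1 > \deg P_0$. Mimicking the construction of Theorem \ref{dense-poles}, set
\[
f(z) = P_0(z) + \eta z^{m_k - 1} + \epsilon_1 z^{m_k + n_k - 1} + \epsilon_2 z^{m_k + n_k},
\]
with nonzero parameters $\eta, \epsilon_1, \epsilon_2$. That computation yields $C_{m_k, n_k}(f) = \pm \epsilon_1 \eta^{n_k - 1} \neq 0$, so $f \in \DD_k$, and the normalized Padé denominator, read from (\ref{Pade-den}), is
\[
Q_{n_k}(z) = 1 - \frac{\epsilon_2}{\epsilon_1} z - \frac{\epsilon_1}{\eta^{n_k - 1}} z^2 P_{n_k - 2}(z),
\]
for some polynomial $P_{n_k - 2}$ whose coefficients stay bounded as $\eta \to 0$. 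A direct computation of $C_{m_k + 1, n_k}$, by expansion along its last columns, shows it equals, up to sign, $\epsilon_2$ when $n_k = 1$ and $\eta^{n_k - 2}\epsilon_1^2$ when $n_k \geq 2$; both are nonzero, so $f \in \NN_k$.

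It remains to make everything small by a careful scaling. With $\eta = \delta$, $\epsilon_1 = \delta^{n_k + 1}$, $\epsilon_2 = \delta^{2 n_k + 2}$ for $\delta > 0$ small, the ratios $\epsilon_2/\epsilon_1 = \delta^{n_k + 1}$ and $\epsilon_1/\eta^{n_k - 1} = \delta^2$ tend to zero, so $Q_{n_k} \to 1$ uniformly on $K_\ell$ (in particular it is pole-free there) while $f \to P_0$ in $H(\Omega)$. Since the numerator $P_{m_k}$ of $[f; m_k/n_k]$ is the truncation of $Q_{n_k}(z) f(z)$ to degree at most $m_k$, the approximant itself tends to $P_0$ uniformly on $K_\ell$ as $\delta \to 0$. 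Therefore, for $\delta$ small enough, $f \in W$ and $\sup_{K_\ell}|f - [f; m_k/n_k]| < 1/s$, so $f \in W \cap U_{\ell, s, N}$, as required. The main obstacle is precisely this last uniform estimate on $K_\ell$: although the denominator degree jumps from $0$ (at the non-normal $P_0$) to $n_k$ (at the perturbed $f$), the asymmetric scaling pushes the extra \emph{ghost} zeros of $Q_{n_k}$ to infinity, so that the approximant stays close to $P_0$ on $K_\ell$.
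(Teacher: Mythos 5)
Your argument is correct, but it follows a genuinely different route from the paper's. The paper disposes of Theorem \ref{F-N} in two lines: it invokes \cite[Theorem 3.7]{FNes}, which gives the conclusion with $\DD_{k}$ in place of $\NN_{k}$, and then upgrades $\DD_k$ to $\NN_k$ by intersecting with the open dense sets $\NN_{k}$ (Proposition \ref{D-N}) and applying Baire. You instead give a self-contained proof that does not use the cited result at all: you write $\hat H(\Omega)$ explicitly as the countable intersection of the open sets $U_{\ell,s,N}$ (which, incidentally, makes the $G_{\delta}$ claim transparent, something the two-line argument leaves implicit), and you prove density by perturbing a polynomial $P_0$ with three small monomials, reusing the Hankel and denominator computations of Theorem \ref{dense-poles} (formulas (\ref{Hankel})--(\ref{Pade-den})) but with the asymmetric scaling $\eta=\delta$, $\epsilon_1=\delta^{n_k+1}$, $\epsilon_2=\delta^{2n_k+2}$ so that $C_{m_k,n_k}=\pm\epsilon_1\eta^{n_k-1}$ and $C_{m_k+1,n_k}=\pm\epsilon_1^2\eta^{n_k-2}$ (resp.\ $\pm\epsilon_2$ for $n_k=1$) are nonzero while $Q_{n_k}\to 1$ locally uniformly; since $P_{m_k}=S_{m_k}(Q_{n_k}f)$, the approximant then follows $P_0$ on $K_\ell$ and the density of each $U_{\ell,s,N}$ follows. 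This is in effect a direct reproof of the relevant part of \cite{FNes} in the stronger $\NN_k$ form, and it is closer in spirit to the paper's alternative density argument inside Theorem \ref{poles-inf} (the construction $f=P/(1-z/\mu)^{n_k}$), with the advantage that your perturbation works for every simply connected $\Omega$, including $\C$, whereas that alternative needs $\Omega\neq\C$; what the paper's citation approach buys, of course, is brevity.
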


\begin{proof}\cite[Theorem 3.7]{FNes} tells that the conclusion holds true with $\DD_{k}$ instead of $\NN_k$. Since $\NN_{k}$ is open and dense in $H(\D)$, the result follows by Baire Category Theorem.
\end{proof}

Note that the assumption that $(m_{k})_{k\geq 0}$ is unbounded cannot be discarded in the above theorem. Indeed, if the sequence $(m_{k})_{k\geq 0}$ is bounded, then, by Rouch\'e's theorem, the functions $f\in\hat H(\Omega)$, uniform limit of Pad\'e approximants $[f;m_{k}/n_{k}]$, can only have a number of zeroes bounded by $\sup_{k}m_{k}$ in $\Omega$. The subset of such functions $f$ is not dense in $H(\Omega)$.

When $\Omega=\C$, the Pad\'e approximants of degrees $(m_{k},n_{k})$, $k\in I$, of the functions in the previous theorem have poles which all go to infinity as $k\to\infty$. Hence the subset of such functions is residual in $H(\C)$. More generally, we have the following assertion.
\begin{theorem}\label{poles-inf}
Let $(m_{k},n_{k})_{k\geq 0}$ be a sequence of pairs of positive integers such that $(m_{k})_{k\geq 0}$ is unbounded. 
The subset  $\tilde H(\Omega)\subset H(\Omega)$ of functions $f$ such that there exists an infinite sequence $I\subset\N$ with $f\in\NN_{k}$ for all $k\in I$ and the poles of the Pad\'e approximants $[f;m_{k}/n_{k}]$, $k\in I$, tend to infinity as $k\to\infty$,
is a %residual
dense $G_{\delta}$ 
subset of $H(\Omega)$. The same assertion holds true in the space of power series $\F$ endowed with the topology of the distance~$d$.
\end{theorem}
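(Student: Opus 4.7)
The plan is a Baire-category argument: express $\tilde H(\Omega)$ as a countable intersection of open dense sets, with the density step based on the construction used in the proof of Theorem \ref{dense-poles}. For each integer $j\geq 1$, set
$$G_{j}=\{f\in H(\Omega)\,:\,\exists k\geq j,\ f\in\NN_{k}\text{ and every pole of }[f;m_{k}/n_{k}]\text{ has modulus}>j\}.$$
A standard diagonal extraction from witnesses $k_{j}\geq j$ of membership in each $G_{j}$ produces a strictly increasing subsequence $I$ along which the poles escape every bounded disk, giving $\tilde H(\Omega)=\bigcap_{j\geq 1}G_{j}$. Each $G_{j}$ is open: by Proposition \ref{D-N}, $\NN_{k}$ is open, and on $\NN_{k}$ the coefficients and the (nonzero) leading coefficient of the Padé denominator depend continuously on $f$, so the location of its roots varies continuously with $f$.

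The main step is density of each $G_{j}$. Given $f_{0}\in H(\Omega)$, a neighborhood of it, and $j$, I would choose $k\geq j$ (possible since $(m_{k})$ is unbounded), apply Runge's theorem to approximate $f_{0}$ in $H(\Omega)$ by a polynomial $P$ of degree exactly $m_{k}-1$ with leading coefficient $a_{m_{k}-1}\neq 0$, and set
$$f(z)=P(z)+\epsilon\,z^{m_{k}-1+n_{k}}+\epsilon^{2}\,z^{m_{k}+n_{k}},\qquad \epsilon>0,$$
exactly as in the proof of Theorem \ref{dense-poles}. That proof gives $C_{m_{k},n_{k}}(f)=\epsilon\,a_{m_{k}-1}^{n_{k}-1}\neq 0$ so $f\in\DD_{k}$, and supplies expression \eqref{Pade-den} for the unnormalized denominator $\hat Q_{n_{k}}$. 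Dividing by its nonvanishing constant term, the normalized denominator
$$Q_{n_{k}}(z)=1-\epsilon z-\frac{\epsilon}{a_{m_{k}-1}^{n_{k}-1}}P_{n_{k}-2}(z)\,z^{2}$$
converges to the constant $1$ uniformly on compact subsets of $\C$ as $\epsilon\to 0$; by Rouché's theorem applied on $\{|z|\leq j\}$, all roots of $Q_{n_{k}}$ have modulus greater than $j$ for $\epsilon$ small enough. For normality, the leading coefficient of $\hat Q_{n_{k}}$ equals (up to sign) the Hankel determinant $C_{m_{k}+1,n_{k}}(f)$, whose sparse structure parallels that of $C_{m_{k},n_{k}}(f)$ and is proportional to $\epsilon^{2}$ times a power of $a_{m_{k}-1}$, hence nonzero; so $f\in\NN_{k}\cap G_{j}$ can be taken as close to $f_{0}$ as desired.

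The variant in $(\F,d)$ runs along identical lines, with Runge replaced by the density of polynomial truncations in the product topology: truncating $f_{0}$ at index $m_{k}-1$ (and shifting the top coefficient by an arbitrarily small amount to ensure $a_{m_{k}-1}\neq 0$) yields an approximation at $d$-distance $O(2^{-m_{k}})$. The step I expect to require the most care is the verification $f\in\NN_{k}$, which boils down to the non-vanishing of $C_{m_{k}+1,n_{k}}(f)$, whose sparsity pattern differs in the regimes $m_{k}\geq n_{k}$ and $m_{k}<n_{k}$; a robust fallback is that, by Proposition \ref{D-N}, $\NN_{k}$ is open and dense in $\DD_{k}$ and the pole-escape condition is open on $\DD_k$, so any $f$ already satisfying the other requirements can be slightly perturbed within both open sets to land in $\NN_{k}\cap G_{j}$.
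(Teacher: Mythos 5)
Your proof is correct, and while the Baire decomposition $\tilde H(\Omega)=\bigcap_j\bigcup_{k}\{f\in\NN_k:\ [f;m_k/n_k]\text{ has no poles in }\bar\D_j\}$ is the same as the paper's, your density argument is genuinely different. The paper proves density of each union either by invoking Theorem \ref{F-N} (density of $\hat H(\C)$ in $H(\Omega)$, which rests on the result of \cite{FNes}), or, when $\Omega\neq\C$, by the explicit choice $f=P/(1-z/\mu)^{n_k}$ with $\mu\notin\Omega\cup\bar\D_j$, for which $[f;m_k/n_k]=f$ and normality is immediate because the denominator has exact degree $n_k$; note that this second construction is unavailable for $\Omega=\C$. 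You instead recycle the perturbation $f=P+\epsilon z^{m_k-1+n_k}+\epsilon^2 z^{m_k+n_k}$ from the proof of Theorem \ref{dense-poles}: the normalized denominator $1-\epsilon z-\epsilon a_{m_k-1}^{-(n_k-1)}P_{n_k-2}(z)z^2$ tends to $1$, so its roots leave $\bar\D_j$, and normality follows since $C_{m_k+1,n_k}$ is (up to sign) $\epsilon^2 a_{m_k-1}^{n_k-2}\neq 0$ --- a computation that in fact goes through regardless of whether $m_k\geq n_k$ or not (the unique surviving permutation in the Hankel determinant only uses the anti-diagonal entries $a_{m_k-1}$ and the two $\epsilon$-entries), and your fallback via the openness of the pole-escape condition on $\DD_k$ together with the density of $\NN_k$ (Proposition \ref{D-N}) covers any residual doubt. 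What your route buys is a self-contained argument, independent of Theorem \ref{F-N}/\cite{FNes}, that treats $\Omega=\C$, $\Omega\neq\C$ and $(\F,d)$ uniformly; what the paper's constructions buy is brevity (the rational-function trick gives normality and pole location with no determinant computation) and, in the first variant, a direct link to the stronger convergence statement of Theorem \ref{F-N}.
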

\begin{remark}
We do not know if the theorem remains true when the sequence $(m_{k})_{k\geq 0}$ is bounded (and $(n_{k})_{k\geq 0}$ is unbounded). Actually, it cannot be true if we consider functions such that the poles of the Pad\'e approximants of the {\it entire} sequence $(m_{k},n_{k})_{k\geq 0}$ tend to infinity. Indeed, by a result of Gonchar, see \cite[\S 3]{Gon}, if $m\in\N$ is a given integer, $f$ is a function analytic e.g. in a disk $\D_{r}$, and the poles of the Pad\'e approximants $[f;m/k]$, $k\geq k_{0}$, lie outside of this disk, then the Pad\'e approximants tend locally uniformly to $f$ in $\D_{r}$. This entails that $f$ has at most $m$ zeroes in $\D_{r}$ and the subset of such functions is not dense in $H(\D_{r})$. 
\end{remark}
\begin{proof}[Proof of Theorem \ref{poles-inf}]
Let $\D_{j}$ be the open disk centered at 0, of radius $j$ and let
$$H_{j,k}=\{f\in H(\Omega),~f\in\NN_{k}\text{ and }
[f;m_{k}/n_{k}]\text{ has no poles in }\bar\D_{j}\}.$$
Then
$$\tilde H(\Omega)=\bigcap_{j=1}^{\infty}\bigcup_{k=1}^{\infty}H_{j,k}.$$
Each set $H_{j,k}$ is clearly open in $H(\Omega)$. Hence, $\tilde H(\Omega)$ is a $G_{\delta}$ subset of $H(\Omega)$. By Theorem \ref{F-N}, $\hat H(\C)$ is dense in $H(\C)$ which is itself a dense subset of $H(\Omega)$ (endowed with its topology). Since the topology of $H(\C)$ is finer than the topology of $H(\Omega)$, we deduce that $\hat H(\C)$ is dense in $H(\Omega)$. Consequently  $H_{j,k}$, which contains $\hat H(\C)$ is also dense in $H(\Omega)$. By Baire's theorem, $\tilde H(\Omega)$ is dense in $H(\Omega)$ as well. 

When $\Omega\neq\C$, we display another proof of the density of $\cup_{k=1}^{\infty}H_{j,k}$, $j\geq 1$, which does not use Theorem \ref{F-N}. Let $P$ be a polynomial and $K$ a compact subset of $\Omega$. We show that there exists a function $f$ in the previous union which is close to $P$ on $K$. 
%First, we assume that the sequence $n_{k}$, $k\geq 0$, is unbounded. 
Let $m_{k}\geq\deg P$, which is possible since $m_{k}$ is unbounded.
Since $\Omega$ is simply connected, and different from $\C$, it cannot contain a neighborhood of infinity. Hence, there exists some complex point $\mu$, large enough, so that $\mu\notin \Omega\cup\bar\D_{j}$, $P(\mu)\neq 0$, and $Q_{n_{k}}(z)=(1-z/\mu)^{n_{k}}$ is uniformly close to 1 on $K$.
%Then, by applying Runge's theorem on a sequence of increasing compact sets whose union is $\C$, there exists a sequence of polynomials $Q_{n_{k}}$ which tend uniformly to the constant 1 on $K$ and whose poles tend to infinity, as $k\to \infty$. 
Now, we choose
%$$f(z)=\frac{P(z)+\lambda z^{m_{k}}}{Q_{n_{k}}(z)}\in H(\Omega),$$
%with $m_{k}\geq\deg P$, which is possible since $m_{k}$ tends to infinity, 
%%$Q_{n_{k}}$ sufficiently close to 1 on $K$ and with no roots in $\bar\D_{j}$, 
%and $\lambda$ sufficiently small. 
$$f(z)=P(z)/Q_{n_{k}}(z)\in H(\Omega).$$
Then, $f$ is close to $P$ on $K$. Moreover, 
%$\lambda$ can be chosen so that $f\in\DD_{k}$. Also 
$[f;m_{k}/n_{k}]$ exists and equals $f$, the Pad\'e denominator $Q_{m_{k},n_{k}}(f)$ equals $Q_{n_{k}}$ which is of degree $n_{k}$, and $f$ has no poles in $\bar\D_{j}$.
%Second, assume that the sequence $n_{k}$, $k\geq 0$, is bounded.
%In this case, we choose
%$$f(z)=\frac{P(z)Q_{n_{k}}(z)+\lambda z^{m_{k}}}{Q_{n_{k}}(z)},\qquad 
%Q_{n_{k}}(z)=(1-z/\mu)^{n_{k}},$$
%with $\mu\notin \Omega\cup\bar\D_{j}$, and $m_{k}\geq\deg P+n_{k}$, which is possible since the sequence $m_{k}$, $k\geq 0$, is unbounded. Then, $f\in H(\Omega)$, and for some $\lambda$ small enough, $f$ is close to $P$ on $K$ and $f\in\DD_{k}$. Moreover, $[f;m_{k}/n_{k}]=f$ has no poles in $\bar\D_{j}$.

The above proofs can be repeated in the space $(\F,d)$.
\end{proof}

\section{A particular set of Pad\'e universal series and applications}\label{part-set}
%In this section, ...

%As mentioned in the introduction our results hold for Pad\'e universal series associated with the Taylor expansion at $0$ of a holomorphic function on a simply connected domain $\Omega$ containing $0$. 
For the sake of clarity, we restrict ourselves in the sequel to the case $\Omega =\D$.
Nevertheless, our results would hold in a simply connected domain $\Omega$ containing the origin. 

\subsection{Pad\'e universal series whose Pad\'e approximants have prescribed poles}

We first exhibit Pad\'e universal series admitting a sequence of Pad\'e approximants of degrees $(\lambda_{n},1)$, $\lambda_{n}\to\infty$, whose denominators equal the polynomial
$1-z$.
\begin{theorem}\label{main-thm}
Let $\mu=\left(p_n\right)_n$ be an unbounded sequence of positive integers. There exists a function $f$%$=\sum _{n=0}^{\infty}a_nz^n$ 
in $H(\D)$ such that for every compact set $K\subset \C \setminus (\D\cup\{1\})$ with $\C \setminus K$ connected and every function $h\in A(K)$, there is a subsequence $\left(\lambda_n\right)_n$ of $\mu$ with the following properties:
\begin{enumerate}[a)]
\item $f \in \NN_{\lambda_n,1}$ for all $n=1,2,\ldots$;
\item $[f;\lambda_n/1]\rightarrow h$ in $A(K)$ as $n\rightarrow \infty$;
\item $[f;\lambda_n/1]\rightarrow f$ in $H(\D)$ as $n\rightarrow \infty$;
\item $Q_{\lambda_n,1}(f)(z)=1-z$.
\end{enumerate}
The set $\UU_{1-z}^{\mu}(\D)$ of such functions is a dense meagre subset of $H(\D)$.
\end{theorem}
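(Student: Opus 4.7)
The approach is to prove density and meagreness of $\UU_{1-z}^\mu(\D)$ separately, using the correspondence $f\leftrightarrow g:=(1-z)f$ on $H(\D)$. Writing $f=\sum a_kz^k$ and $g=\sum b_kz^k$, the Pad\'e approximant $[f;\lambda/1]$ has denominator exactly $1-z$ with $f\in\NN_{\lambda,1}$ if and only if $a_{\lambda+1}=a_\lambda\neq 0$, equivalently $b_{\lambda+1}=0$ and $S_\lambda(g)(1)\neq 0$; in that case $[f;\lambda/1]=S_\lambda(g)/(1-z)$, and since the numerator at $z=1$ equals $S_\lambda(g)(1)=a_\lambda\neq 0$, coprimality is automatic. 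For meagreness, set $E_n:=\{f\in H(\D):a_{p_n+1}=a_{p_n}\}$, a closed linear hyperplane, hence nowhere dense. Any $f\in\UU_{1-z}^\mu(\D)$ belongs to $\limsup_n E_n$, since applying the universal property to a single pair (e.g.\ $K=\{2\}$ with $h=0$) already produces infinitely many $n$ for which $a_{p_n+1}=a_{p_n}$. Because each $E_n^c$ is open and dense, $\bigcap_{n\geq N}E_n^c$ is a dense $G_\delta$ for every $N$, so $\bigcup_N\bigcap_{n\geq N}E_n^c$ is residual and its complement $\limsup_n E_n\supset\UU_{1-z}^\mu(\D)$ is meagre.

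For density, I first observe that $\UU_{1-z}^\mu(\D)$ is stable under addition of any polynomial. Given $f'\in\UU_{1-z}^\mu(\D)$, $P\in\P$ and an admissible pair $(K,h)$, apply the universal property of $f'$ to $(K,h-P)\in A(K)$ to obtain a subsequence $(\lambda_n)\subset\mu$, and discard finitely many terms so that $\lambda_n>\deg P$; then the Taylor coefficients $a_{\lambda_n},a_{\lambda_n+1}$ of $f'+P$ coincide with those of $f'$, so conditions (a) and (d) persist, and
\[
[f'+P;\lambda_n/1]=\frac{S_{\lambda_n}((1-z)(f'+P))}{1-z}=P+[f';\lambda_n/1]\longrightarrow P+(h-P)=h\text{ on }K,
\]
while the $H(\D)$-convergence to $f'+P$ carries over. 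Since $\P$ is dense in $H(\D)$, the density of $\UU_{1-z}^\mu(\D)$ reduces to exhibiting a single element $f'=g/(1-z)$. To this end, I enumerate $(K_j,h_j)$ dense in admissible pairs with $h_j\in\P$ (Mergelyan) and fix a compact exhaustion $(L_j)$ of $\D$. The function $g=\sum_j T_j\in H(\D)$ is built inductively: given $g_{j-1}:=T_1+\cdots+T_{j-1}$ a polynomial of degree $\leq\lambda_{j-1}\in\mu$, I choose $\lambda_j\in\mu$ large and a polynomial $T_j$ with $\text{val}(T_j)>\lambda_{j-1}+1$, $\deg T_j\leq\lambda_j$, such that $T_j$ approximates $(1-z)h_j-g_{j-1}$ to within $2^{-j}$ on $K_j$, with $\|T_j\|_{L_j}<2^{-j}$, and so that $g_j(1)=g_{j-1}(1)+T_j(1)\neq 0$. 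Such a $T_j$ is produced by dividing $(1-z)h_j-g_{j-1}$ by $z^{\text{val}(T_j)}$ (valid since $0\notin K_j$), applying Mergelyan on $K_j$, multiplying back by $z^{\text{val}(T_j)}$, using the largeness of $\text{val}(T_j)$ to make $T_j$ small on $L_j\subset\D$, and finally performing a codimension-one adjustment of one free coefficient to enforce $g_j(1)\neq 0$. The resulting $f'=g/(1-z)$ then satisfies $b_{\lambda_j+1}=0$ (from the valuation/degree constraints) and $a_{\lambda_j}(f')=g_j(1)\neq 0$, so $[f';\lambda_j/1]=g_j/(1-z)$ has denominator $1-z$ with normality, approximates $h_j$ on $K_j$, and converges to $f'$ in $H(\D)$; density of $(K_j,h_j)$ then yields (a)--(d) for every admissible $(K,h)$.

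The main obstacle is the simultaneous satisfaction of valuation, degree, Mergelyan-approximation, $L_j$-smallness, and normalization constraints on each $T_j$. The valuation requirement is the most delicate and is handled by multiplying by $z^{\text{val}(T_j)}$ before applying Mergelyan, which is possible because admissible compacts $K_j\subset\C\setminus(\D\cup\{1\})$ stay uniformly away from $0$; the $L_j$-smallness is then inherited from the rapid decay of $z^{\text{val}(T_j)}$ on $L_j$ as $\text{val}(T_j)\to\infty$. The normalization $g_j(1)\neq 0$ is a generic, codimension-one condition, easily enforced by an arbitrarily small perturbation of $T_j$ in a direction that affects neither the Mergelyan approximation on $K_j$ nor the smallness on $L_j$.
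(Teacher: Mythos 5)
Your reduction of the Pad\'e conditions to coefficient identities (namely that $Q_{\lambda,1}(f)=1-z$ together with $f\in\NN_{\lambda,1}$ amounts to $a_{\lambda+1}=a_{\lambda}\neq 0$, in which case $[f;\lambda/1]=S_{\lambda}\bigl((1-z)f\bigr)/(1-z)$) is correct and is exactly the mechanism the paper exploits; your meagreness argument via the hyperplanes $E_n$ and your observation that $\UU_{1-z}^{\mu}(\D)$ is stable under addition of polynomials (so that density follows from exhibiting a single element) are also correct, and the latter is a nice simplification of the paper's proof, which instead runs the inductive construction once for each polynomial $T$, starting from $f_0=(1-z)T$.

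There is, however, a genuine gap at the heart of your inductive construction of $g$, in the step producing $T_j$. You apply Mergelyan to $z^{-v}\bigl[(1-z)h_j-g_{j-1}\bigr]$ on $K_j$ \emph{alone}, obtaining a polynomial $R$, set $T_j=z^{v}R$ with $v=\text{val}(T_j)$, and claim that $\left\Vert T_j\right\Vert_{L_j}\leq\bigl(\max_{L_j}|z|\bigr)^{v}\left\Vert R\right\Vert_{L_j}$ is small ``by the rapid decay of $z^{v}$''. But Mergelyan's theorem on $K_j$ gives no control whatsoever on $\left\Vert R\right\Vert_{L_j}$: the compact $L_j$ is disjoint from $K_j$, and since $K_j$ does not surround the unit disk ($\C\setminus K_j$ is connected), a polynomial bounded on $K_j$ can be arbitrarily large on $L_j$. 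Moreover $R$ depends on $v$ (the function being approximated changes with $v$), so you cannot fix $R$ first and then let $v\to\infty$; a Bernstein--Walsh type estimate does not rescue the argument either, because the degree of $R$ is equally uncontrolled. The simultaneous requirements (closeness to the target on $K_j$, smallness on $L_j$, valuation at least $v$) are precisely the content of the paper's Lemma~\ref{mergelyan}, and its proof uses the one ingredient your argument omits: one approximates on the \emph{union} $K_j\cup\bar\D$ --- which has connected complement --- the function equal to $z^{-v}\bigl[(1-z)h_j-g_{j-1}\bigr]$ on $K_j$ and to $0$ on $\bar\D$, and only then multiplies by $z^{v}$, so that smallness on $L_j\subset\D$ comes from the approximation itself and not from the factor $z^{v}$. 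With this correction (keeping your small perturbation enforcing $g_j(1)\neq 0$, which is fine), the remainder of your construction --- the coefficient matching giving $[f';\lambda_j/1]=g_j/(1-z)$, normality from $g_j(1)\neq 0$, and the bookkeeping with the enumeration of pairs $(K_j,h_j)$, which should be arranged so that each pair is revisited infinitely often, as the paper does with the maps $\phi,\psi$ --- goes through and yields the theorem.
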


The proof is based on two lemmas.
\begin{lemma}\label{lemma-geo} 
Let $a\in\C\setminus\D$. There exists a sequence $\left(K_n\right)_n$ of compact subsets of $\C\setminus (\D\cup\{a\})$, with $\C \setminus K_n$ connected for all $n$, such that every compact set $K\subset \C\setminus (\D\cup\{a\})$ with $\C \setminus K$ connected is included in a set $K_n$ for some $n$.
\end{lemma}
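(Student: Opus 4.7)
I will construct $(K_n)_n$ explicitly as an enumeration of a countable family $(K_{m,p})$. Let $\PP$ denote the countable set of polygonal paths $p : [0, +\infty) \to \C$ such that $p(0) = 0$, $p$ passes through $a$, every vertex of $p$ other than $a$ has rational real and imaginary parts, and $p$ ends with a ray from its last finite vertex through a further rational point out to infinity. For each $(m, p) \in (\N \setminus \{0\}) \times \PP$, set
\[
V_{m,p} := \{z \in \C : d(z, p([0, +\infty))) < 1/m\}, \qquad K_{m,p} := \{z \in \C : 1 \leq |z| \leq m\} \setminus V_{m,p}.
\]

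The elementary checks are that $K_{m,p}$ is closed and bounded, hence compact; that it is contained in $\{|z| \geq 1\} \subset \C \setminus \D$; and that $a \in V_{m,p}$ (since $a$ lies on $p$), whence $a \notin K_{m,p}$. For the connectedness of the complement, I rewrite
\[
\C \setminus K_{m,p} = \{|z| < 1\} \cup \{|z| > m\} \cup V_{m,p}.
\]
The tube $V_{m,p}$, being an open tubular neighborhood of the connected set $p([0, +\infty))$, is itself connected; it meets $\{|z| < 1\}$ at $0 \in p$, and it meets $\{|z| > m\}$ at $p(t)$ for $t$ large enough (since $|p(t)| \to +\infty$). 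Thus the three sets in the union are glued together through $V_{m,p}$.

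The substantive part is the covering property. Given a compact $K \subset \C \setminus (\D \cup \{a\})$ with $\C \setminus K$ connected, choose $M > 0$ with $K \subset D(0, M)$. Since $\C \setminus K$ is open and connected in $\C$, it is polygonally path-connected, so I can join $0$ to $a$ and $a$ to some $w$ with $|w| > M + 1$ by polygonal arcs in $\C \setminus K$, and then append the outward radial ray from $w$ to infinity, which stays in $\{|z| > M\} \subset \C \setminus K$. This produces a polygonal path $q : [0, +\infty) \to \C \setminus K$ through $a$, going to infinity; its bounded part is compact, so $\delta := d(K, q([0, +\infty))) > 0$. I then perturb each vertex of $q$ other than $a$ by less than $\delta/3$ to a nearby rational point, obtaining some $p \in \PP$ whose image lies within Euclidean distance $\delta/3$ of that of $q$. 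Choosing $m > \max(M, 3/\delta)$ ensures that $V_{m,p}$ stays within distance $1/m + \delta/3 < 2\delta/3$ of $q$, hence is disjoint from $K$; combined with $K \subset \{1 \leq |z| \leq m\}$, this yields $K \subset K_{m,p}$.

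The main obstacle I expect is producing the polygonal path $q$ at positive distance from $K$ and approximating it by an element of $\PP$ while preserving the prescribed passage through $a$ and the terminal ray to infinity. Both follow from classical facts: open connected subsets of $\C$ are polygonally path-connected, and $\Q^2$ is dense in $\C$; splitting $q$ into its portions before and after $a$ lets one approximate each separately without moving $a$.
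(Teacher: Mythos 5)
Your construction is essentially the paper's own: the paper enumerates simple polygonal lines $\Gamma_j^k$ from $0$ to $a$, passing through the integer point $k$ and having rational intermediate vertices, and takes for $(K_n)_n$ an enumeration of the sets $\{z:\ 1\le|z|\le k,\ \mathrm{dist}(z,\Gamma_j^k)\ge l^{-1}\}$; you likewise remove an open tube around a polygonal path through $0$ and $a$ from an annulus, and your verifications of compactness, of $a\notin K_{m,p}$, and of the connectedness of the complement (the tube glues $\{|z|<1\}$ to $\{|z|>m\}$) are the same as the paper's.

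The one step that does not hold as written is the approximation of $q$ by $p$ along the unbounded ray. Unless the terminal ray of $p$ (from the rational point $w'$ through a further rational point) is exactly parallel to the radial ray of $q$ — which it generically cannot be, since its direction is rational while $w/|w|$ need not be — the distance between the two rays grows linearly along them, so the claim that the image of $p$ lies within $\delta/3$ of the image of $q$ is false, and the estimate ``$V_{m,p}$ stays within $1/m+\delta/3<2\delta/3$ of $q$, hence is disjoint from $K$'' is unjustified for the far portion of the tube. The conclusion survives with a small repair: cap the vertex perturbations by $\min(\delta/3,1/2)$ and choose the second rational point near a far point of the original radial ray, so that the modulus is increasing along the perturbed ray and stays $\ge M+1/2$; then the $1/m$-tube around the ray part misses $K\subset D(0,M)$, while the bounded part of $p$ is handled by your $2\delta/3$ argument. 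Alternatively, the paper's device avoids the issue altogether: instead of appending an infinite ray, make the polygonal line pass through the point $k$ (or any point on the circle bounding the annulus), so that only a compact path has to be approximated and its tube still connects the inner and outer components of the complement.
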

\begin{proof}The proof is a slight modification of that of \cite[Lemma 2.1]{Nes}. For an integer $k\geq 1$, let $\left(\Gamma _j^k\right)_{j}$ be an enumeration of all simple polygonal lines from $0$ to $a$, passing through $k$, having a finite number of vertices, all of them with rational coordinates. Let $\left(K_n\right)_{n}$ be an enumeration of the family
$$\left(\left\{z\in \C,\,1\leq |z|\leq k,\,\text{dist}\left(z,\Gamma _j^k\right)\geq l^{-1}\right\}\right)_{j,k,l\geq 1}.$$
Then $\left(K_n\right)_n$ satisfies the assertion of the lemma. Indeed, for any compact set $K$ in the complement of $\D\cup\{a\}$, with connected complement, there exists a simple polygonal line with finitely many vertices of rational coordinates connecting $0$ to $a$, through $k$, $k$ large enough, whose distance to $K$ is strictly positive. Hence $K$ is included in $K_n$ for some $n$.
\end{proof}
\begin{remark}\rm{Note that the sequence $\left(K_n\right)_n$ in Lemma \ref{lemma-geo} is not an increasing sequence.}
\end{remark}
The next lemma follows easily from Mergelyan's theorem.
% (see \cite[Lemma 5]{Bay}).
\begin{lemma}\label{mergelyan}Let $a\in\C\setminus\D$. Let $K$ be a compact subset of $\C \setminus (\D\cup\{a\})$ with connected complement, $L$ a compact subset of $\D$ and $h\in A(K)$. Then for every $\varepsilon >0$, every $\alpha\in \C$ and every integer $p\geq 1$, there exists a polynomial $P=\sum _{k=p}^qa_kz^k$ such that
\begin{enumerate}[a)]
\item $P(a)\neq 0$ and $P(a)\neq \alpha$.
\item $\left\Vert P-h \right\Vert _K \leq \varepsilon$;
\item $\left\Vert P\right\Vert _L \leq \varepsilon$.
\end{enumerate}
\end{lemma}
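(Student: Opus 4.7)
The plan is to deduce the lemma from an application of Mergelyan's theorem to the union of $K$ with a closed disk containing $L$, after first dividing out a factor $z^{p}$ in order to produce the required valuation, and then to handle condition a) by a small perturbation.

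Fix $\rho\in(0,1)$ with $L\subset\bar\D_{\rho}$ and set $E=K\cup\bar\D_{\rho}$. The crucial geometric step is to check that $\C\setminus E$ is connected. Since $K\subset\C\setminus\D$ and $\bar\D_{\rho}\subset\D$, the open annulus $A=\{\rho<|z|<1\}$ is contained in $\C\setminus E$ and is itself connected, and any point of $\C\setminus E$ lying in $\D$ already belongs to $A$. For $z_{0}\in\C\setminus E$ with $|z_{0}|\geq 1$, pick some $\rho'\in(\rho,1)$ and a target $w\in A$ with $|w|<\rho'$; by the hypothesis on $K$ there is a path from $z_{0}$ to $w$ in $\C\setminus K$, and truncating it at its first hitting of the circle $\{|z|=\rho'\}$ produces a path from $z_{0}$ to a point of $A$ that stays in $\{|z|\geq\rho'\}\setminus K\subset\C\setminus E$.

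Next, since $K\subset\C\setminus\D$ one has $0\notin K$, so $h/z^{p}\in A(K)$, and the function $g$ defined by $g=h/z^{p}$ on $K$ and $g=0$ on $\bar\D_{\rho}$ belongs to $A(E)$ (the two pieces being disjoint compact sets). Mergelyan's theorem then produces, for any prescribed $\delta>0$, a polynomial $R$ with $\|R-g\|_{E}<\delta$. Setting $P_{0}(z)=z^{p}R(z)$ yields a polynomial of valuation at least $p$ satisfying
$$\|P_{0}-h\|_{K}\leq\delta\max_{z\in K}|z|^{p}\quad\text{and}\quad\|P_{0}\|_{L}\leq\rho^{p}\delta,$$
both of which are smaller than $\varepsilon/2$ once $\delta$ is chosen small enough.

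Finally, to enforce condition a) consider $P(z)=P_{0}(z)+\eta z^{p}$. Since $a\neq 0$, the map $\eta\mapsto P_{0}(a)+\eta a^{p}$ is an affine bijection of $\C$, so for every $\eta$ outside two specific values one has $P(a)\notin\{0,\alpha\}$; taking $|\eta|$ sufficiently small preserves the previous estimates within $\varepsilon$ on $K$ and $L$, and $P$ still has valuation at least $p$. I expect the main obstacle to be the connectivity verification in the first step, because Mergelyan's theorem must be applied to the union $K\cup\bar\D_{\rho}$ rather than to $K$ alone; once this is in hand, the Mergelyan approximation and the perturbation argument are routine.
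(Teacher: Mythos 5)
Your proof is correct and follows essentially the same route as the paper's: approximate $z^{-p}h$ on $K$ and $0$ on a disk containing $L$ via Mergelyan, multiply by $z^{p}$, and secure condition a) by adding a small multiple of $z^{p}$ (the paper adds a small monomial of degree $>p$). The only difference is one of care: the paper simply asserts that $K\cup\D$ has connected complement, whereas you apply Mergelyan on $K\cup\bar\D_{\rho}$ with $L\subset\bar\D_{\rho}$, $\rho<1$, and verify the connectivity explicitly — a sensible precaution, since $K$ may touch $\partial\D$.
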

\begin{proof}%\cite[Lemma 5]{Bay} gives a polynomial $P$ satisfying b) and c). 
Since $K\cup\D$ has connected complement, one may find a polynomial $Q$ such that
$$\|Q\|_{\D}\leq\epsilon,\quad\|Q-z^{-p}h\|_{K}\leq\epsilon.$$
The polynomial $P=z^{p}Q$ satisfies b) and c).
If a) is not satisfied, it suffices to add to $P$ a monomial of degree greater than $p$ with a coefficient small enough.
\end{proof}
\begin{proof}[Proof of Theorem \ref{main-thm}]We directly prove that $\UU_{1-z}^{\mu}(\D)$ is a dense meagre subset of $H(\D)$.  
%To prove that this set is meagre we first claim the following. 
It is easy to see that
%\begin{claim}\rm{
a power series $\sum _k a_kz^k$ whose Pad\'e approximant of degrees $(p,1)$ has (non reducible) denominator $1-z$ must satisfy $a_{p+1}=a_p$.
%}
%\end{claim}
%\begin{proof}[proof of the claim]Since $P=\sum _ kb_kz^k$ is a polynomial of degree at most $p$, the $p$-th and $(p+1)$-th coefficients in the Taylor expansion of $P/(1-z)$ coincide (they are both given by $\sum _{k=0}^{p}b_k$). Then $a_p$ and $a_{p+1}$ have to be equal.
%
%We could also make use of the second formula of \ref{deter}.
%\end{proof}
Hence, the complement of $\UU_{1-z}^{\mu}(\D)$ contains the subset $E$ of $H(\D)$ consisting of those power series $\sum _k a_kz^k$ with radius of convergence at least $1$ such that for any $p\in \N$, $a_p\neq a_{p+1}$. The set $E$ can be written as $E=\bigcap_{n\geq 0}E_n$ where
$$E_n=\left\{\sum _ka_kz^k \in H(\D),\,a_k\neq a_{k+1}\text{ for every }k=0,\ldots ,n\right\}.$$
Each $E_n$, $n\in \N$, is clearly open and dense in $H(\D)$, then by the Baire Category Theorem, $E$ is a dense $G_{\delta}$ subset of the Baire space $H(\D)$. Thus the complement of $\UU_{1-z}^{\mu}(\D)$ is residual and $\UU_{1-z}^{\mu}(\D)$ is meagre.

We now turn to proof of the density of $\UU_{1-z}^{\mu}(\D)$. We fix a polynomial $T$ in $H(\D)$, a compact set $L\subset \D$ and $0<\varepsilon _0<1$. Let \\
-- $\left(L_n\right)_n$ be an exaustion of compact subsets of $\D$ such that for all $n\geq 0$, $L\subset L_n$, \\
-- $\left(K_n\right)_n$, a sequence of compact sets given by Lemma \ref{lemma-geo} with $a=1$,\\
-- $\left(Q_n\right)_n$, an enumeration of polynomials with coefficients in $\Q+i\Q$,\\
-- $\phi,\,\psi:\N\rightarrow \N$, two functions such that, for every pair $(m,l)$ of positive integers, there exist infinitely many integers $n$ such that $(\phi(n),\psi(n))=(m,l)$.

We build by induction a power series $f$ in $\UU^{\mu}(\D)$ which is close to $T$ in $H(\D)$, namely %up to $\varepsilon _0$ in the sense that
$$\left\Vert f-T\right\Vert _L \leq \varepsilon _0,$$
where $\varepsilon _0>0$.
We set $f_0=(1-z)T$. Then we assume that the polynomial $f_j$ has been built for some $j\geq 1$. We set $f_{j+1}=f_j+P$ where $P$ is given by Lemma \ref{mergelyan} with
$$a=1,\qquad K=K_{\psi(j+1)},\qquad L=L_{j+1},\qquad h=(1-z)Q_{\phi(j+1)}-f_j,$$
and
$$\varepsilon =%\frac{
\varepsilon _0 2^{-j-1}\min(1,d(1,K_{\psi(j+1)}),d(1,L)),\quad
\alpha=-f_{j}(1),\quad p=\min_{n}\left\{p_n;\,p_n\geq \text{deg}(f_{j})\right\}+2.
$$
%\begin{itemize}\item $\varepsilon =\frac{\varepsilon _0}{2^{j+1}}\min(1,d(1,K_{\psi(j+1)}),d(1,L))$,
%\item $a=-f_{j}(1)$,
%\item $p=\min\left\{p_n;\,p_n\geq \text{deg}(f_{j})\right\}+2$,
%\item $K=K_{\psi(j+1)}$, $L=L_{j+1}$
%\item $h=(1-z)Q_{\phi(j+1)}-f_j$.
%\end{itemize}
By construction we have, for every $j\geq 1$,
\begin{enumerate}[(i)]
\item $\displaystyle{\left\Vert (1-z)Q_{\phi(j)}-f_j\right\Vert _{K_{\psi(j)}}\leq \varepsilon \leq 2^{-j}\min(1,d(1,K_{\psi(j)}))}$;
\item $\displaystyle\left\Vert f_{j+1}-f_j\right\Vert _{L_{j+1}} \leq \varepsilon 
\leq 2^{-j-1}d(1,L)\varepsilon _0$.
\end{enumerate}
We now define 
\begin{equation}\label{def-tildef}
\tilde{f}=\sum _{j\geq 0}\left(f_{j+1}-f_j\right)+f_0.
\end{equation}
By (ii) above, we deduce that $\tilde{f}\in H(\D)$ and that $f_N$ tends to $\tilde{f}$ in $H(\D)$, as $N$ tends to $\infty$. Moreover
$$\tilde{f}-(1-z)T=\sum _{j\geq 0}(f_{j+1}-f_j),$$
so that (ii) also implies
$$\|\tilde{f}-(1-z)T\|_L \leq d(1,L)\varepsilon _0,$$
since $L\subset L_j$ for every $j\geq 1$. Finally we define $f=\tilde{f}/(1-z)$ and show that $f$ gives the desired power series. From the above observations, it is clear that $f\in H(\D)$ and that
$$\left\Vert f-T\right\Vert _L \leq \varepsilon _0.$$
It remains to show that $f\in \UU _{1-z}^{\mu}(\D)$. Notice that, by construction, for every $j\geq 1$, if we denote by $p_{n_j}$ the smallest element of $\mu$ such that $p_{n_j}\geq \text{deg}\left(f_j\right)$, then
\begin{equation}\label{eq-thm-pole-1}f\in \NN_{p_{n_j},1}\quad\text{and}\quad[f;p_{n_j}/1]=\frac{f_j}{1-z}.
\end{equation}
Indeed, since for every $j\geq 1$, $f_{j+1}=f_{j}+P$ with $\text{val}(P)\geq p_{n_j}+2$, the $p_{n_j}+1$ first coefficients of the Taylor expansions of $f$ and $f_j/(1-z)$ coincide. In addition $f_j$ does not vanish at $1$ since $f_j=f_{j-1}+P$ with $P(1)\neq -f_{j-1}(1)$. As $\text{deg}\left(f_j\right) \leq p_{n_j}$ (\ref{eq-thm-pole-1}) follows.
%and PROPOSITION2.2.

We now fix $\varepsilon >0$, a compact set $K\subset \C\setminus (\D\cup \{1\})$ with connected complement and a function $h\in A(K)$. Let $(l,m)$ be two integers such that $K\subset K_m$ and such that $\left\Vert Q_l-h\right\Vert _{K_m}\leq \varepsilon /2$. Let then $\left(v_j\right)_j$ be an infinite sequence such that $(l,m)=(\phi(v_j),\psi(v_j))$ for every $j\geq 0$, and consider the subsequence $\left(p_{n_j}\right)_j$ of $\mu$ where $p_{n_j}$ is the smallest element $p_n$ of $\mu$ with $p_{n}\geq \text{deg}\left(f_{v_j}\right)$. By (i) and the above, there exists $j$ large enough such that
$$\left\Vert [f;p_{n_j}/1]-h\right\Vert _K\leq  \bigg\|\frac{f_{v_j}}{1-z}-h\bigg\|_{K_m}\leq  \left\Vert \frac{f_{v_j}}{1-z}-Q_l\right\Vert _{K_m}+ \left\Vert Q_l-h\right\Vert _{K_m}\leq  \frac{\varepsilon }{2}+\frac{\varepsilon }{2}.
$$
Property c) in the theorem follows from (\ref{eq-thm-pole-1}) and the fact that $f_{v_j}/(1-z)$ tends to $f$ in $H(\D)$.
\end{proof}

Theorem \ref{main-thm} holds true with $1-z$ replaced by $1-z/w$, for any point $w\in \C \setminus \D$. Actually, one can replace the polynomial $1-z$ by any polynomial $Q$ of some degree $q\geq 1$,
\begin{equation}\label{def-Q}
Q(z)=\prod _{i=1}^q \left(1-\frac{z}{w_i}\right)\text{ with }|w_i|\geq 1\text{ for every }i,
\end{equation}
and the assertion $f\in \NN_{\lambda _n,1}$ by the assertion $f\in \NN_{\lambda _n,q}$. This is the content of the following theorem, which is the main result of this section.

\begin{theorem}\label{main-thm-Q}
Let $W=(w_1,\ldots,w_q)$ be a family of $q$ points in $\C \setminus \D$, not necessarily distinct. Let $\mu=\left(p_n\right)_n$ be an unbounded sequence of integers and $Q$ a polynomial of degree $q$ as in (\ref{def-Q}).
There exists a function $f$%$=\sum _{n=0}^{\infty}a_nz^n$ 
in $H(\D)$ such that for every compact set $K\subset \C \setminus (\D\cup W)$ with $\C \setminus K$ connected, and every function $h\in A(K)$, there is a subsequence $\left(\lambda_n\right)_n$ of $\mu$ with the following properties:
\begin{enumerate}[a)]
\item $f \in \NN_{\lambda_n,q}$ for all $n=1,2,\ldots$;
\item $[f;\lambda_n/q]\rightarrow h$ in $A(K)$ as $n\rightarrow \infty$;
\item $[f;\lambda_n/q]\rightarrow f$ in $H(\D)$ as $n\rightarrow \infty$;
\item $Q_{\lambda_n,q}(f)=Q$.
\end{enumerate}
The set $\UU_{Q}^{\mu}(\D)$ of such functions is a dense meagre subset of $H(\D)$.
\end{theorem}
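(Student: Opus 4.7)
The strategy is to mirror the proof of Theorem \ref{main-thm}, replacing the polynomial $1-z$ by $Q$ and the single point $\{1\}$ by the finite set $W=\{w_{1},\ldots,w_{q}\}$ (with repeated $w_{i}$'s collapsed to the set of distinct roots of $Q$ where relevant). Write $Q(z)=1+b_{1}z+\cdots+b_{q}z^{q}$.

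Meagreness of $\UU_{Q}^{\mu}(\D)$ is almost immediate: if $Q_{p,q}(f)=Q$ for some $f=\sum_{k}a_{k}z^{k}$, then the linearized Pad\'e relation \eqref{pade} forces the coefficient of $z^{p+1}$ in $Q(z)f(z)$ to vanish, i.e.\ $\sum_{i=0}^{q}b_{i}a_{p+1-i}=0$ (with the convention $a_{k}=0$ for $k<0$). Setting
\[
E_{n}=\Bigl\{f=\sum_{k}a_{k}z^{k}\in H(\D)\,:\,\sum_{i=0}^{q}b_{i}a_{p+1-i}\neq 0\text{ for every }0\leq p\leq n\Bigr\},
\]
each $E_{n}$ is open and dense in $H(\D)$ (a finite intersection of complements of hyperplanes), so by Baire's theorem $\bigcap_{n}E_{n}$ is a dense $G_{\delta}$ set contained in $H(\D)\setminus\UU_{Q}^{\mu}(\D)$, proving meagreness.

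For density, I would first extend Lemmas \ref{lemma-geo} and \ref{mergelyan} to the finite excluded set $W$: the polygonal-line construction still produces an exhausting family $(K_{n})$ of compact subsets of $\C\setminus(\D\cup W)$ with connected complement (use polygonal lines based at $0$, visiting each distinct $w_{i}$ and some large point $k$, with rational vertices), while in the Mergelyan step one imposes, for each distinct $w_{i}$, the additional non-equalities $P(w_{i})\neq 0$ and $P(w_{i})\neq\alpha_{i}$, which are secured by adding small high-degree monomials. The inductive construction from the proof of Theorem \ref{main-thm} is then repeated with $f_{0}=Q\cdot T$ and $f_{j+1}=f_{j}+P$, where $P$ is supplied by the generalized Mergelyan lemma with $h=Q\cdot Q_{\phi(j+1)}-f_{j}$, targets $\alpha_{i}=-f_{j}(w_{i})$ (so that $f_{j+1}(w_{i})\neq 0$ at every distinct root $w_{i}$), and the same valuation/degree constraints tied to some $p_{n_{j}}\in\mu$ with $p_{n_{j}}\geq\deg f_{j}$. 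The limit $\tilde{f}=\lim_{j}f_{j}$ lies in $H(\D)$ and, since $Q$ has no zeros in $\D$, so does $f:=\tilde{f}/Q$. By construction $Q(z)f(z)-f_{j}(z)=\OO(z^{p_{n_{j}}+q+1})$, and because $f_{j}$ and $Q$ share no common root they are coprime, so Proposition \ref{equiv-pade} yields $[f;p_{n_{j}}/q]=f_{j}/Q$ in irreducible form with denominator $Q$ of degree $q$; this delivers (a), (d), and normality. Properties (b) and (c) follow from the approximation estimates and the exhaustion property of $(K_{n})$ exactly as in Theorem \ref{main-thm}.

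The main technical subtlety I expect is the case in which $Q$ has multiple roots (some $w_{i}$'s coinciding): one must verify that requiring $f_{j}(w_{i})\neq 0$ only at each \emph{distinct} root $w_{i}$ is still enough to force $\gcd(f_{j},Q)=1$, so that the irreducible Pad\'e denominator is the full polynomial $Q$ rather than one of its proper factors. This reduces to the elementary observation that any nontrivial common divisor of $f_{j}$ and $Q$ would have to share a root with $Q$, which the Mergelyan conditions rule out.
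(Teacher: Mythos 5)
Your proposal is correct and follows essentially the same route as the paper: the authors likewise reduce to the proof of Theorem \ref{main-thm}, replacing Lemmas \ref{lemma-geo} and \ref{mergelyan} by their $W$-versions (Lemmas \ref{lemma-geo-Q} and \ref{mergelyan-Q}), starting the induction at $f_0=Q\cdot T$ with $h=Q\cdot Q_{\phi(j+1)}-f_j$, $a_i=-f_j(w_i)$ and the shifted valuation, and obtaining meagreness from the linear constraint that $Q_{p,q}(f)=Q$ imposes on the Taylor coefficients. Your explicit coefficient identity $\sum_{i=0}^{q}b_i a_{p+1-i}=0$ and the coprimality remark for multiple roots of $Q$ are exactly the points the paper leaves implicit, and they are handled correctly.
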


The proof is an easy modification of that of Theorem \ref{main-thm}, based on the two following lemmas whose proofs are similar to those of Lemmas \ref{lemma-geo} and \ref{mergelyan}, respectively, and are omitted.
\begin{lemma}\label{lemma-geo-Q}Let $W=(w_1,\ldots,w_q)$ be a family of $q$ points in $\C \setminus \D$, not necessarily distinct. There exists a sequence $\left(K_n\right)_n$ of compact subsets of $\C\setminus (\D\cup W)$, with $\C \setminus K_n$ connected for all $n$, such that every compact set $K\subset \C\setminus (\D\cup W)$ with $\C \setminus K$ connected is included in a set $K_n$ for some $n$.
\end{lemma}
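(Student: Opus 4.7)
The plan is to adapt the construction of Lemma \ref{lemma-geo} to accommodate the $q$ excluded points $w_1,\ldots,w_q$ simultaneously. For each integer $k\geq 1$, I would enumerate all $q$-tuples $(\Gamma_1,\ldots,\Gamma_q)$ of simple polygonal lines such that each $\Gamma_i$ joins $0$ to $w_i$, passes through $k$, and has finitely many vertices with coordinates in $\Q+i\Q$. I then let $(K_n)_n$ be an enumeration of the family
\[
\Bigl\{z\in\C:\,1\leq|z|\leq k,~\text{dist}\bigl(z,\Gamma_1\cup\cdots\cup\Gamma_q\bigr)\geq l^{-1}\Bigr\},
\]
as $k$, $l$, and the tuples range over their respective sets.

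The next step is to verify that each $K_n$ is a compact subset of $\C\setminus(\D\cup W)$ with $\C\setminus K_n$ connected. The condition $|z|\geq 1$ excludes $\D$, and the distance condition excludes each $w_i$, which belongs to $\Gamma_i$. The complement $\C\setminus K_n$ is the union of the three open sets $\D$, $\{|z|>k\}$, and the open $l^{-1}$-neighbourhood $N$ of the connected compact set $\Gamma_1\cup\cdots\cup\Gamma_q$. The set $N$ is itself path-connected, meets $\D$ close to $0$, and meets $\{|z|>k\}$ close to $k$ (since $k$ lies on the circle $|z|=k$), so the three pieces glue together and $\C\setminus K_n$ is connected.

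The main point is to show that every admissible $K$ fits into some $K_n$. Given such $K$, I pick $k$ so large that $\max_{z\in K}|z|<k$ and $\max_i|w_i|<k$. The open set $\C\setminus K$ is connected, hence path-connected, and contains $0$, $k$, and each $w_i$. Thus for each $i$ one can choose a continuous path from $0$ to $k$ and from $k$ to $w_i$ inside $\C\setminus K$; after approximating by polygonal arcs with finitely many vertices in $\Q+i\Q$ (small perturbations remain in the open set $\C\setminus K$) and deleting self-loops, I obtain a simple polygonal line $\Gamma_i$ from $0$ to $w_i$ through $k$, still contained in $\C\setminus K$. The compact union $\Gamma_1\cup\cdots\cup\Gamma_q$ is disjoint from the compact set $K$, so $\delta:=\text{dist}(K,\cup_i\Gamma_i)>0$; choosing $l$ with $l^{-1}<\delta$ then yields $K\subset K_n$ for the $K_n$ corresponding to these data.

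The only point slightly more delicate than in the single-point case $q=1$ is the simultaneous routing of $q$ polygonal arcs sharing the two common vertices $0$ and $k$; this causes no real obstacle, since $\C\setminus K$ is open and path-connected, so the arcs may be chosen independently, simplicity of each $\Gamma_i$ is achieved individually by loop removal, and no interaction between distinct $\Gamma_i$'s is needed to ensure that the union remains connected or that the thickened complement retains the required connectivity.
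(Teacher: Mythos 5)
Your construction is exactly the natural adaptation of Lemma \ref{lemma-geo} (itself a modification of \cite[Lemma 2.1]{Nes}) that the paper has in mind when it omits the proof: enumerate rational-vertex polygonal arcs joining $0$ to the excluded points through a large integer $k$, and take the sets at distance at least $l^{-1}$ from their union, inside the annulus $1\leq|z|\leq k$. Both the verification that $\C\setminus K_n$ is connected and the covering argument via paths in the open connected set $\C\setminus K$ are sound, so the proposal is correct and follows essentially the same route as the paper.
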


\begin{lemma}\label{mergelyan-Q}Let $W=(w_1,\ldots,w_q)$ be a family of $q$ points in $\C \setminus \D$, not necessarily distinct. Let $K$ be a compact subset of $\C \setminus (\D\cup W)$ with connected complement and $L$ a compact subset of $\D$. Let also $h\in A(K)$. Then for every $\varepsilon >0$, every family $(a_1,\ldots ,a_q)\in \C$ and every integer $p\geq 1$, there exists a polynomial $P=\sum _{k=p}^rb_kz^k$ such that
\begin{enumerate}[a)]
\item $P(w_i)\neq 0$ and $P(w_i)\neq a_i$ for every $1\leq i \leq q$;
\item $\left\Vert P-h \right\Vert _K \leq \varepsilon$;
\item $\left\Vert P\right\Vert _L \leq \varepsilon$.
\end{enumerate}
\end{lemma}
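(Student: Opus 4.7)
My plan is to mimic the proof of Lemma \ref{mergelyan}: first produce a polynomial satisfying the approximation conditions (b) and (c) together with the valuation constraint, then correct the finitely many point evaluations at the $w_i$'s by a small high-degree perturbation. The only new ingredient compared to Lemma \ref{mergelyan} is a linear-algebra step to handle the multiplicity of point constraints.

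For the first step, I would disregard condition (a) and run the argument of Lemma \ref{mergelyan} essentially verbatim. Since $0\notin K$ (because $K\subset \C\setminus\D$), the function equal to $z^{-p}h(z)$ on $K$ and to $0$ on $\bar\D$ is continuous on $K\cup\bar\D$ and holomorphic in its interior, and $K\cup\bar\D$ has connected complement. Mergelyan's theorem yields a polynomial $\tilde Q$ with $\|\tilde Q\|_{\bar\D}$ and $\|\tilde Q-z^{-p}h\|_K$ arbitrarily small, so that $P_0(z):=z^p\tilde Q(z)=\sum_{k=p}^r b_k z^k$ has valuation at least $p$ and satisfies $\|P_0\|_L\leq\varepsilon/2$ and $\|P_0-h\|_K\leq\varepsilon/2$.

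For the second step, let $\tilde w_1,\ldots,\tilde w_s$ be the pairwise distinct points among $w_1,\ldots,w_q$, fix an integer $M>\max(r,p)$, and look for $P$ in the form $P:=P_0+R$ with $R(z):=\sum_{k=1}^s c_k z^{M+k-1}$. Since the $\tilde w_j$ are distinct and nonzero (as $|\tilde w_j|\geq 1$), the $s\times s$ matrix $(\tilde w_j^{M+k-1})_{j,k}$ factors as $\operatorname{diag}(\tilde w_j^M)$ times a Vandermonde matrix in the $\tilde w_j$, and is therefore invertible. Hence $(c_1,\ldots,c_s)\mapsto (P(\tilde w_j))_{j=1}^s$ is an affine bijection of $\C^s$ onto itself, and the set of tuples for which $P(\tilde w_j)\in\{0\}\cup\{a_i:w_i=\tilde w_j\}$ for some $j$ is a finite union of complex affine hyperplanes in $\C^s$, of empty interior. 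One picks $(c_1,\ldots,c_s)$ avoiding this union and with each $|c_k|$ so small that $\|R\|_{K\cup L}\leq\varepsilon/2$; the resulting $P$ has valuation at least $p$ and satisfies (a), (b), and (c).

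The only slightly delicate point is the simultaneous adjustment of the $s$ point values, which is exactly what the Vandermonde factorization above provides; apart from this, the argument is parallel to the proof of Lemma \ref{mergelyan}.
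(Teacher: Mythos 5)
Your proposal follows essentially the same route the paper intends for this lemma (it is stated there as a straightforward modification of Lemma \ref{mergelyan}): approximate the function equal to $z^{-p}h$ on $K$ and to $0$ on a disk via Mergelyan, multiply by nothing (the factor $z^p$ is already built in), and then kill the finitely many forbidden values at the $w_i$ by a small perturbation of high valuation. Two remarks. First, your Vandermonde system is more machinery than is needed: since each condition $P(w_i)\notin\{0,a_i\}$ excludes only finitely many values of the coefficient of a single added monomial $cz^M$ (because $w_i\neq 0$), one small monomial of degree $M>\max(r,p)$ already does the job, which is exactly the paper's device in Lemma \ref{mergelyan}; your affine-bijection argument is correct, just heavier. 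Second, a genuine (though easily repaired) imprecision: $K$ is only assumed disjoint from the \emph{open} disk $\D$, so it may meet the unit circle; in that case the function defined as $z^{-p}h$ on $K$ and $0$ on $\bar\D$ need not be well defined or continuous on $K\cup\bar\D$, and $K\cup\bar\D$ need not have connected complement (e.g.\ a ``horseshoe'' arc in $\{|z|\geq 1\}$ attached to $\partial\D$ at two points has connected complement, but adding $\bar\D$ traps a bounded component). The fix is to replace $\bar\D$ by a closed disk $\bar\D_\rho$ with $L\subset\D_\rho$ and $\rho<1$: then $\bar\D_\rho$ and $K$ are disjoint compacta, each with connected complement, so their union has connected complement and Mergelyan applies, yielding $\|P_0\|_L\leq\|P_0\|_{\bar\D_\rho}\leq\varepsilon/2$ as required. (The paper's own one-line proof of Lemma \ref{mergelyan} is loose on the same point.) With these adjustments your argument is complete.
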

\begin{proof}[Proof of Theorem \ref{main-thm-Q}] It suffices to repeat the proof of Theorem \ref{main-thm}. The set $\UU_{Q}^{\mu}(\D)$ being meager is still a consequence of the fact that the denominator of the $\left(\lambda_n,q\right)$ Pad\'e approximants of any $f\in\UU_{Q}^{\mu}(\D)$ equals the prescribed polynomial $Q$. The density follows by applying Lemma \ref{lemma-geo-Q} and defining the recurrence with $f_0=Q(z)T$ and then, at the $(j+1)$-th step, by using Lemma \ref{mergelyan-Q} with
$$K=K_{\psi(j+1)}, \quad L=L_{j+1},\quad h=Q\cdot Q_{\phi(j+1)}-f_j,$$
$$\varepsilon =\varepsilon _0 2^{-j-1}\min(1,d(W,K_{\psi(j+1)}),d(W,L)),
\quad a_i=-f_{j}(w_i)\text{ for }1\leq i \leq q,$$
and
$$
p=\min_{n}\left\{p_n;\,p_n\geq \text{deg}(f_{j})\right\}+q+1,$$
where the notations in the proof of Theorem \ref{main-thm}) are used.
\end{proof}

%\begin{remark}A similar result for a polynomial $Q$ with roots in $\D\setminus \{0\}$ can be obtained if we restrict ourselves to the existence of a universal power series with radius of convergence at most $\min\{|w_1|,\ldots,|w_q|\}$.
%\end{remark}

We end this paragraph with a result which will be useful in the next section. 
%Let us introduce the following definition.

\begin{definition}\label{defi-new}Let $W=(w_1,\ldots,w_q)$ be a family of $q$ points in $\C \setminus \D$, not necessarily distinct. Let $\mu$ be an unbounded sequence of positive integers and $Q$ be a polynomial 
as in (\ref{def-Q}).
%given by
%$$Q(z)=\prod _{i=1}^q \left(1-\frac{z}{w_i}\right)\text{ with }|w_i|\geq 1\text{ for every }i.$$
Let also $K\subset \C \setminus (\D\cup W)$ be a compact subset with $\C \setminus K$ connected. We denote by $\UU_{Q}^{\mu}(\D,K)$ the set of functions $f \in H(\D)$ satisfying the same properties as those in $\UU _Q^{\mu}(\D)$, except that
%(see Theorem \ref{main-thm-Q}) 
Property b) is verified only on the compact set $K$.
\end{definition}

%Writing $MS=\left(p_n,q\right)_n$ with $\left(p_n\right)_n=\mu$ and if $Q$ is any polynomial of degree $q$ with zeros outside $K\cup \D$, it is clear that the following inclusion holds:
%$$\UU_{Q}^{\mu}(\D,K) \cup \UU ^{S}(\D) \subset \UU^{[\mu,q]}(\D,K)\text{ (See Theorem \ref{thm-main-intro} for the definition of }\UU ^{S}(\D)\text{)}.$$
%Following the proof of Theorem \ref{main-thm-Q} (see the beginning of the proof of Theorem \ref{main-thm}) (resp. using Theorem \ref{DFN-main}), we easily get:

The next corollary is a slight refinement of Theorem \ref{main-thm-Q}.

\begin{corollary}\label{last-coro-gene}Let $W=(w_1,\ldots,w_q)$ be a family of $q$ points in $\C \setminus \D$, not necessarily distinct. Let $\mu^i=\left(p^i_n\right)_n$, $i\in \N$, be a countable family of unbounded sequences of positive integers and $Q$ be a polynomial 
as in (\ref{def-Q}).
% and $Q$ a polynomial given by
%$$Q(z)=\prod _{i=1}^q \left(1-\frac{z}{w_i}\right)\text{ with }|w_i|\geq 1\text{ for every }i.$$
Let also $\left(K_i\right)_i$, $i\in \N$, be a countable family of compact sets of $\C \setminus (\D\cup W)$ with $\C \setminus K_i$ connected for every $i\in \N$. Then the set
$$\bigcap _{i\in \N}\UU_{Q}^{\mu^i}(\D,K_i)$$
is a dense meagre subset of $H(\D)$.
\end{corollary}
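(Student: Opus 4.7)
The strategy is to reduce both claims to the arguments in the proof of Theorem \ref{main-thm-Q}, handling the countable intersection by a standard diagonalization. For meagreness, each factor $\UU_Q^{\mu^i}(\D, K_i)$ is already meagre in $H(\D)$, by the same reasoning as at the start of the proof of Theorem \ref{main-thm-Q}: membership forces property d), i.e.\ $Q_{\lambda_n,q}(f)=Q$ along some infinite subsequence of $\mu^i$, which imposes algebraic relations on finitely many Taylor coefficients of $f$ and fails on a dense $G_\delta$ set. The intersection over $i$ then inherits meagreness.

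The main content is density. Fix $T\in H(\D)$, a compact $L\subset\D$, and $\varepsilon_0>0$. I would mimic the recursive construction of Theorem \ref{main-thm-Q}, with an enumeration handling simultaneously the pair $(\mu^i,K_i)$ and the target polynomial. Concretely, choose an enumeration $(Q_l)_l$ of polynomials with coefficients in $\Q+i\Q$ and maps $\phi,\rho:\N\to\N$ such that every pair $(l,i)\in\N^2$ is attained infinitely often by $(\phi(j),\rho(j))$. Then build $f_j$ inductively starting from $f_0=QT$: at step $j+1$ apply Lemma \ref{mergelyan-Q} with $K=K_{\rho(j+1)}$, $L=L_{j+1}$ (from an exhaustion of $\D$), $h=Q\cdot Q_{\phi(j+1)}-f_j$, $a_\ell=-f_j(w_\ell)$, and
$$p=\min_n\{p_n^{\rho(j+1)}:p_n^{\rho(j+1)}\geq \deg(f_j)\}+q+1,$$
the tolerance $\varepsilon$ being shrunk by a factor $2^{-j-1}$ as in Theorem \ref{main-thm-Q}. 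Setting $\tilde f=f_0+\sum_{j\geq 0}(f_{j+1}-f_j)$ and $f=\tilde f/Q$, the telescoping estimates ensure $f\in H(\D)$ with $\|f-T\|_L\leq\varepsilon_0$.

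For any fixed $i$, the subsequence $J_i=\{j\geq 1:\rho(j)=i\}$ is infinite, and for $j\in J_i$ the same identities as in Theorem \ref{main-thm-Q} yield $f\in\NN_{p^i_{n_j},q}$, $[f;p^i_{n_j}/q]=f_j/Q$, and $Q_{p^i_{n_j},q}(f)=Q$, where $p^i_{n_j}$ is the smallest element of $\mu^i$ with $p^i_{n_j}\geq\deg(f_j)$; property a) of Lemma \ref{mergelyan-Q} prevents any cancellation at the $w_\ell$'s in the Pad\'e denominator. Approximating any $h\in A(K_i)$ by some $Q_l$ and using that $(l,i)$ is hit infinitely often by $(\phi,\rho)$ then yields a subsequence of $\mu^i$ along which $[f;\,\cdot\,/q]\to h$ on $K_i$, while property c) follows from the usual tail summability in $H(\D)$. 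The main subtlety, and the point where the single-sequence proof needs the most care, is that $\rho$ varies with $j$, so the Pad\'e identity at index $p^i_{n_j}$, $j\in J_i$, must survive the subsequent perturbations $P_k$, $k>j$, drawn from potentially different sequences $\mu^{\rho(k)}$. This works because the construction forces $\text{val}(P_k)\geq\deg(f_k)+q+1\geq p^i_{n_j}+q+1$ independently of $\rho(k)$, so $\tilde f-f_j$ carries the high valuation required for the Pad\'e identity to hold at index $p^i_{n_j}$.
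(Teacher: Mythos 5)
Your overall strategy is the same as the paper's (the paper itself only gives a scheme): meagreness of each factor $\UU_{Q}^{\mu^i}(\D,K_i)$ plus the inductive Mergelyan construction of Theorem \ref{main-thm-Q}, with the valuation parameter drawn from the sequence attached to the compact handled at each step. The meagreness half is fine. The genuine gap is in your verification of the Pad\'e identity for $j\in J_i$. You claim $[f;p^i_{n_j}/q]=f_j/Q$ with $p^i_{n_j}=\min\{p\in\mu^i:\,p\geq \deg f_j\}$ and $i=\rho(j)$, and justify it by ``$\text{val}(P_k)\geq \deg(f_k)+q+1\geq p^i_{n_j}+q+1$''. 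The second inequality is unjustified, and it fails exactly where it matters: the first perturbation after $f_j$ is produced at step $j+1$ and, in your construction, has valuation at least $\min_n\{p_n^{\rho(j+1)}:\,p_n^{\rho(j+1)}\geq \deg f_j\}+q+1$, a threshold calibrated to $\mu^{\rho(j+1)}$, not to $\mu^{\rho(j)}=\mu^i$. If $\rho(j+1)\neq i$ and $\mu^{\rho(j+1)}$ has an element just above $\deg f_j$ while the next element of $\mu^i$ is much larger, then $\tilde f$ and $f_j$ may already differ in a coefficient of index $\leq p^i_{n_j}+q$, so the identity $[f;p^i_{n_j}/q]=f_j/Q$ need not hold, and there need not even exist any element of $\mu^i$ in the admissible range $[\deg f_j,\ \text{val}(\tilde f-f_j)-q-1]$. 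Likewise $\deg f_k$ for $k>j$ is governed by the other sequences $\mu^{\rho(k)}$ and need not exceed $p^i_{n_j}$, so the inequality you invoke cannot rescue the first (or the subsequent) terms.

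The fix is a re-indexing, not a new idea: at step $j+1$ take $p=\min_n\{p_n^{\rho(j)}:\,p_n^{\rho(j)}\geq \deg f_j\}+q+1$, i.e. calibrate the valuation to the sequence paired with the compact $K_{\rho(j)}$ on which $f_j$ has just been made close to $Q\cdot Q_{\phi(j)}$. Then for $j\in J_i$ the first perturbation has valuation $\geq p^i_{n_j}+q+1$, every later perturbation automatically has strictly larger valuation (degrees increase past $p^i_{n_j}+q$ after one step), and with property a) of Lemma \ref{mergelyan-Q} one gets $f\in\NN_{p^i_{n_j},q}$, $Q_{p^i_{n_j},q}(f)=Q$ and $[f;p^i_{n_j}/q]=f_j/Q$ with $p^i_{n_j}\in\mu^i$, after which your approximation and convergence arguments go through. (Alternatively, keep the threshold from $\mu^{\rho(j+1)}$ but choose $\rho$ so that each value is taken on two consecutive indices infinitely often, pairing the approximation achieved at step $j$ with the index supplied by step $j+1$.) Since the paper leaves precisely this verification to the reader, this indexing point was the one substantive thing to supply, and it is where your argument as written breaks.
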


\begin{proof}We only give a scheme of the proof. The set is meagre in $H(\D)$ as a subset of meagre sets. For the density we let the reader check that it follows as in the proof of Theorem \ref{main-thm-Q} by building a universal Pad\'e series by induction, starting with $f_0=Q(z)T$ and using Lemma \ref{mergelyan-Q} for the $(j+1)$-th step with
$$K=K_{\psi(j+1)}, \quad L=L_{j+1},\quad h=Q\cdot Q_{\phi(j+1)}-f_j,$$
$$\varepsilon =\varepsilon _0 2^{-j-1}\min(1,d(W,K_{\psi(j+1)}),d(W,L)),
\quad a_i=-f_{j}(w_i)\text{ for }1\leq i \leq q,$$
and
$$
p=\min_{n}\left\{p_n^{\psi(j+1)};\,p_n^{\psi(j+1)}\geq \text{deg}(f_{j})\right\}+q+1.$$
%\begin{itemize}
%\item $\varepsilon =\frac{\varepsilon _0}{2^{j+1}}\min(1,d(\{w_1,\ldots,w_q\},K_{\psi(j+1)}),d(\{w_1,\ldots,w_q\},L))$,
%\item $a_i=-f_{j}(w_i)$ for $1\leq i \leq q$,
%\item $K=K_{\psi(j+1)}$, $L=L_{j+1}$,
%\item $h=Q\cdot Q_{\phi(j+1)}-f_j$,
%\item $p=\min\left\{p^{\psi(j+1)}_n;\,p^{\psi(j+1)}_n\geq \text{deg}(f_{j})\right\}+q+1$,
%\end{itemize}
%where we use the same notations as in the proof of Theorem \ref{main-thm}). The end of the proof is standard.
\end{proof}

%Theorem \ref{main-thm-Q} will have several corollaries that we present in the next subsection.

\subsection{Algebraic genericity of a set of Pad\'e universal series}

Pad\'e approximation is not a linear operation and thus algebraic genericity for Pad\'e universal series is not a straightforward property. However, using Theorem \ref{main-thm-Q} with a particular polynomial $Q$, one can show that a set of Pad\'e universal series, related to $\UU_{Q}^{\mu}(\D)$, contains a linear subspace (except for the zero function) dense in $H(\D)$.
%The goal of this section is the following theorem.
\begin{theorem}\label{alg-gene}Let $\mu$ be an unbounded sequence of positive integers, $a\in \C \setminus \D$, and $q\geq 1$ an integer. The set 
$$\tilde\UU^{\mu}_{Q_{q}}(\D):=\bigcup_{p=0}^{q}\UU_{Q_{p}}^{\mu}(\D),
\qquad Q_{p}(z)=(1-z/a)^{p},$$
is algebraically generic. More precisely, there exists a sequence of functions $f_{k}\in
\UU_{Q_{q}}^{\mu}(\D)$, $k\geq 0$, such that $F=\text{span}(f_{k},k\geq 0)$ is dense in $H(\D)$ and $F\setminus\{0\}$ is contained in $\tilde\UU^{\mu}_{Q_{q}}(\D)$.
% i.e. it
%contains, apart from the zero function, a dense linear subspace of $H(\D)$.
\end{theorem}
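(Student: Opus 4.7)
My plan is to construct the sequence $(f_k)_{k\ge 0}$ inductively, using Corollary \ref{last-coro-gene} at each stage to enforce enough joint Pad\'e universality that arbitrary nonzero linear combinations of the $f_l$'s are themselves Pad\'e universal, possibly with a reduced denominator $Q_{p}$ for some $p\in\{0,\ldots,q\}$.

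The central observation I would first record is a linearization principle. Suppose $f_0,\ldots,f_N\in\UU_{Q_q}^{\mu}(\D)$ share a common subsequence $(\lambda_n)\subset\mu$ and targets $h_l\in A(K)$ for which, for each $l$, the irreducible Pad\'e approximant $[f_l;\lambda_n/q]=P_n^{(l)}/Q_q$ converges to $h_l$ on $K$ and to $f_l$ in $H(\D)$. Then, for any scalars $(c_l)$, the unreduced fraction $R_n/Q_q$ with $R_n:=\sum_l c_l P_n^{(l)}$ satisfies $g-R_n/Q_q=\OO(z^{\lambda_n+q+1})$ for $g:=\sum_l c_l f_l$. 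Cancelling the greatest common factor $(1-z/a)^{t_n}$ of $R_n$ and $Q_q$, where $t_n\in\{0,\ldots,q\}$ is the multiplicity of $1-z/a$ in $R_n$, yields the true Pad\'e approximant $[g;\lambda_n/(q-t_n)]$. After extracting a sub-subsequence on which $t_n\equiv t^{*}$ is constant, these reduced approximants converge to $\sum_l c_l h_l$ on $K$ and to $g$ in $H(\D)$, with denominator $Q_{q-t^{*}}$ throughout.

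For the construction, I would fix a dense sequence $(g_k)$ of polynomials in $H(\D)$ (to force density of $F$), an exhaustion $(L_k)$ of $\D$, the sequence $(K_m)$ of compact sets from Lemma \ref{lemma-geo-Q} applied with $W=\{a\}$, and a decomposition of $\mu$ into countably many disjoint infinite subsequences $\{\mu^{(i)}\}_i$. At step $k$, I enumerate finitely many triples $((c_0,\ldots,c_{k-1})\in(\Q+i\Q)^k,\,K_m,\,i)$ indexed by some cutoff depending on $k$, and then apply Corollary \ref{last-coro-gene} to pick $f_k\in H(\D)$ lying in the corresponding dense meagre intersection $\bigcap\UU_{Q_q}^{\mu^{(i)}}(\D,K_m)$, subject to the additional constraint $\|f_k-g_k\|_{L_k}\le 2^{-k}$. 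The enumeration is arranged so that in the final sequence, for every finite rational combination $g=\sum_l c_l f_l$ and every compact $K_m$, some reserved $\mu^{(i)}$ is available along which the $f_l$'s admit joint Pad\'e convergence $[f_l;\lambda_n/q]\to h_l$ on $K_m$ and to $f_l$ in $H(\D)$, with prescribed targets $h_l$ (the canonical choice being $h_l=0$ for $l<N$ and $h_N=h/c_N$, where $N$ is the largest index with $c_N\ne 0$).

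Granting the construction, the estimate $\|f_k-g_k\|_{L_k}\le 2^{-k}$ combined with density of $(g_k)$ yields density of $F=\mathrm{span}(f_k,\,k\ge 0)$ in $H(\D)$. For any nonzero $g=\sum_{l\le N}c_l f_l\in F$ (with $c_N\ne 0$ after reordering), the integer $t_n$ takes some value $t^{*}=t^{*}(g)$ infinitely often in $\mu$; restricting to a reserved $\mu^{(i)}$ that meets this set in an infinite subsequence, $g$ is normal of type $(\lambda_n,q-t^{*})$ along a subsequence on which joint universality of $f_0,\ldots,f_N$ is available. For any $K\subset K_m$ with connected complement in $\C\setminus(\D\cup\{a\})$ and any $h\in A(K)$, joint universality followed by the linearization observation produces $[g;\lambda_n/(q-t^{*})]\to h$ on $K$ and to $g$ in $H(\D)$, with denominator $Q_{q-t^{*}}$. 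Hence $g\in\UU_{Q_{q-t^{*}}}^{\mu}(\D)\subset\tilde\UU_{Q_q}^{\mu}(\D)$, as required. The main difficulty, and the delicate step of the proof, is the inductive bookkeeping that ties the countable enumeration of $(\mu^{(i)})$ and rational coefficient tuples to the combinations $g$ which later appear in $F$, so that a reserved $\mu^{(i)}$ can indeed serve as a common subsequence for joint convergence against every $K_m$; this is precisely what the density clause of Corollary \ref{last-coro-gene} provides at each inductive step.
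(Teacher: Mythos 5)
Your linearization principle is correct and is essentially the paper's closing argument (all the approximants $[f_l;\lambda_n/q]$ have denominator exactly $Q_q$, so the weighted sum of their numerators over $Q_q$ matches $g=\sum_l c_l f_l$ to order $z^{\lambda_n+q+1}$, and after cancelling a power of $1-z/a$ one gets the true Pad\'e approximant of $g$ with denominator $Q_{p}$ for some $0\le p\le q$); likewise, choosing each $f_k$ densely via Corollary \ref{last-coro-gene} is how the paper proceeds. The gap is in how you propose to secure a \emph{common} subsequence along which simultaneously $[f_l;\lambda_n/q]\to 0$ in $A(K)$ for $l<N$, $[f_N;\lambda_n/q]\to h/c_N$ in $A(K)$, and $[f_l;\lambda_n/q]\to f_l$ in $H(\D)$ with denominator exactly $Q_q$ for all $l\le N$. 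You reserve, \emph{in advance}, disjoint subsequences $\mu^{(i)}$ of $\mu$ and claim that the density clause of Corollary \ref{last-coro-gene} supplies the required bookkeeping. It does not: $f_l\in\UU_{Q_q}^{\mu^{(i)}}(\D,K_m)$ only says that for each target there exists \emph{some} subsequence of $\mu^{(i)}$ doing the job, and these subsequences can be pairwise almost disjoint for different $l$. Since the $\mu^{(i)}$ are fixed before any $f_k$ is chosen, nothing forces the earlier functions $f_0,\dots,f_{N-1}$ to have approximants tending to $0$ (with denominator $Q_q$) along the particular subsequence of the reserved $\mu^{(i)}$ on which the later function $f_N$ approximates $h/c_N$; so the joint convergence your linearization step needs is not established.

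The missing ingredient is a nested refinement of the index sequences, adapted to the functions already constructed. Keep, for each $i$, a current sequence $\mu^i_k$; after choosing $f_k\in\bigcap_i\UU_{Q_q}^{\mu^i_k}(\D,K_i)$ (close to $g_k$, to get density of $F$), extract $\mu^i_{k+1}\subset\mu^i_k$ along which $[f_k;\cdot/q]\to 0$ in $A(K_i)$, $[f_k;\cdot/q]\to f_k$ in $H(\D)$, and the Pad\'e denominator equals $Q_q$; then choose $f_{k+1}$ universal with respect to the \emph{refined} sequences $\mu^i_{k+1}$ (Corollary \ref{last-coro-gene} applies to arbitrary countable families of unbounded sequences, so this is legitimate even though the $\mu^i_k$ depend on the construction). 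With this nesting, for $g=\sum_{k\le l}\alpha_k f_k$ with $\alpha_l\neq 0$ and $K\subset K_{i_0}$, any subsequence of $\mu^{i_0}_l$ along which $[f_l;\cdot/q]\to h/\alpha_l$ automatically carries $[f_k;\cdot/q]\to 0$ and $\to f_k$ for every $k<l$, and your linearization observation then concludes exactly as in the paper. In particular, no enumeration of rational coefficient tuples is needed: the cancellation argument works for arbitrary scalars once the nested subsequences are in place, which is precisely the bookkeeping your pre-reserved partition of $\mu$ cannot provide.
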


\begin{proof} The scheme of proof is standard, see e.g.  \cite[Corollary 19]{bgnp}. Let $\left(O_j\right)_{j\geq 0}$ be a countable basis of neighboorhoods of the separable space $H(\D)$ and let $\left(K_i\right)_i$ be a sequence of compact subsets given by Lemma \ref{lemma-geo}.
%with $w_1=\ldots=w_q=a$. 
%Let $Q=(1-z/a)^q$. Observe that the zero of $Q$ is outside any $K_n$. 
We set $\mu ^i_0=\mu = \left(p_n\right)_n$ for any $i\in\N$. By Corollary \ref{last-coro-gene} there exists $f_0 \in \bigcap _{i\in \N}\UU_{Q_{q}}^{\mu_0^i}(\D,K_{i})\cap O_0$. For any $i\in \N$, let $\mu _1^i :=\left(p_n^{1,i}\right)_n$ be a subsequence of $\mu^i_0$ such that
\begin{enumerate}
\item $[f_0;p_n^{1,i}/q]\rightarrow 0$ as $n\rightarrow \infty$ in $A(K_{i})$;% uniformly on $K_i$;
\item $[f_0;p_n^{1,i}/q]\rightarrow f_0$ as $n\rightarrow \infty$ in $H(\D)$;
\item $Q_{q}$ is the denominator of $[f_0;p_{n}^{1,i}/q]$ for every $n\geq 0$.
\end{enumerate}
We then define $f_1\in \bigcap _{i\in \N}\UU_{Q_{q}}^{\mu^i _1}(\D,K_i)\cap O_1$ and construct by induction sequences $\left(f_k,\mu ^i_k\right)_k$ with $\mu _k^i=\left(p_n^{k,i}\right)_n$, $i \in \N$, such that for any $i$,
\begin{enumerate}
\item [4.]$f_k\in \bigcap _{i\in \N}\UU_{Q_{q}}^{\mu^i _k}(\D,K_i)\cap O_k$;
\item [5.] $\mu _{k+1}^i$ is a subsequence of $\mu _k^i$, for any $i$;
\item [6.] $[f_k; p_n^{j,i}/q] \rightarrow 0$ as $n\rightarrow \infty$ in $A(K_i)$ for any $j> k$;
\item [7.] $[f_k; p_n^{j,i}/q] \rightarrow f_k$ as $n\rightarrow \infty$ in $H(\D)$ for any $j> k$;
\item [8.] $Q_{q}$ is the denominator of $[f_k; p_n^{j,i}/q]$ for any $n\geq 0$ and any $j> k$.
\end{enumerate}
We assert that $F=\text{span}\left(f_k,\,k\geq 0\right)$ satisfies the properties of the theorem. It is clearly dense in $H(\D)$ since the family $\left(f_k\right)_k$ is dense in $H(\D)$. Let now
$$g=\alpha_0 f_0+\ldots \alpha _l f_l,\qquad \alpha _l\neq 0,$$
be any nonzero function of $F$.
We have to show that $g\in \tilde\UU^{\mu,}_{Q_{q}}(\D)$. Let $K\subset \C \setminus (\D\cup \{a\})$ and $h\in A(K)$. Choose $i_0$ such that $K\subset K_{i_0}$. Since $f_l \in \bigcap _{i\in \N}\UU_{Q_{q}}^{\mu ^i_l}(\D,K_i)$ there exists a subsequence $\left(\lambda _n\right)_n$ of $\mu ^{i_0}_l$ such that $Q$ is the denominator of $[f_k;p_{\lambda _n}^{l,i_0}/q]$ for any $0\leq k\leq l$ and such that
$$[f_l;p_{\lambda _n}^{l,i_0}/q]\to\frac{h}{\alpha _l}\text{ in }A(K)\quad\text{and}\quad[f_l;p_{\lambda _n}^{l,i_0}/q]\rightarrow f_l\text{ in }H(\D),\text{ as }n\rightarrow \infty,$$
while, for every $0\leq k<l$, 
$$[f_k; p_{\lambda _n}^{l,i_0}/q] \rightarrow 0 \text{ in } A(K)\quad\text{and}\quad[f_k; p_{\lambda _n}^{l,i_0}/q] \rightarrow f_k\text{ in }H(\D),\text{ as }n\rightarrow \infty.$$
Moreover from items 3. and 8., the Pad\'e approximant $[g,p_{\lambda _n}^{l,i_0}/q]$ exists and, for every $n\geq 0$
\begin{equation}\label{pade-g}
[g;p_{\lambda _n}^{l,i_0}/q]=\sum _{k=0}^l\alpha_{k}[f_k;p_{\lambda _n}^{l,i_0}/q].
\end{equation}
Thus,
$$ [g;p_{\lambda _n}^{l,i_0}/q]\to h\text{ in }A(K)\quad\text{and}\quad[g;p_{\lambda _n}^{l,i_0}/q]\rightarrow g\text{ in }H(\D),\text{ as }n\rightarrow \infty.$$
There exists at least a degree $0\leq p_{0}\leq q$ such that the denominator of (\ref{pade-g}), written in irreducible form, equals $Q_{p_{0}}$ for infinitely many $n$. Consequently, $g\in\UU_{Q_{p_{0}}}^{\mu}(\D)\subset \tilde\UU_{Q_{q}}^{\mu}(\D)$.
\end{proof}
\subsection{Spaceability of a set of Pad\'e universal series}
In this section, we prove that the set $\tilde\UU_{Q_{q}}^{\mu}(\D)$ is also spaceable.

\begin{theorem}\label{spaceability}The set $\tilde\UU_{Q_{q}}^{\mu}(\D)$ is spaceable, namely, it contains, except for the zero function, an infinite dimensional closed linear subspace of $H(\D)$. More precisely, with the notations of Theorem \ref{alg-gene}, there exists a sequence of functions $f_{k}\in
\UU_{Q_{q}}^{\mu}(\D)$, $k\geq 0$, such that $F=\text{span}(f_{k},k\geq 0)$ is a closed infinite dimensional subspace of $H(\D)$ and $F\setminus\{0\}$ is contained in $\tilde\UU^{\mu}_{Q_{q}}(\D)$.
\end{theorem}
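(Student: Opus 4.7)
The plan is to adapt the construction of Theorem \ref{alg-gene} by enforcing decay and valuation conditions that turn the sequence $(f_k)$ into a block basic sequence in $H(\D)$, and then to show via a diagonal extraction that every nonzero element of the resulting closed span is a Pad\'e universal series.

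\textbf{Construction.} I would run the same nested induction as in the proof of Theorem \ref{alg-gene}, producing $f_k \in \bigcap_{i\in\N} \UU_{Q_q}^{\mu_k^i}(\D, K_i)$ and subsequences $\mu_{k+1}^i \subset \mu_k^i$ satisfying items (4)--(8) there. At step $k$ I would add two conditions: (a) $\|f_k\|_{L_k} \leq 2^{-k}$, where $(L_k)$ is an exhaustion of $\D$ by compact sets; and (b) $\text{val}(f_k)\geq N_k$ with $(N_k)$ strictly increasing, and the $N_k$-th Taylor coefficient of $f_k$ nonzero. These extra requirements are compatible with the density part of Corollary \ref{last-coro-gene}, since multiplying a Pad\'e universal series by $z^{N_k}$ and adding a small correction preserves membership in $\bigcap_i\UU_{Q_q}^{\mu_k^i}(\D,K_i)$ while shrinking the $H(\D)$-seminorms. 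Condition (b) makes $(f_k)$ a block basic sequence in $H(\D)$, so $F := \overline{\text{span}(f_k,\,k\geq 0)}$ is a closed infinite-dimensional subspace, and every $g\in F$ admits a unique expansion $g=\sum_{k\geq 0}\alpha_k f_k$ convergent in $H(\D)$, with continuous coordinate functionals $g\mapsto\alpha_k$.

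\textbf{Universality of nonzero elements.} Let $g\in F\setminus\{0\}$ and $l=\min\{k:\alpha_k\neq 0\}$. Fix $K\subset\C\setminus(\D\cup\{a\})$ with connected complement and $h\in A(K)$, and pick $i_0$ with $K\subset K_{i_0}$. I would use the universality of $f_l$ with respect to $\mu_l^{i_0}$ to extract a subsequence $(\lambda_n)\subset\mu_l^{i_0}$ along which $[f_l;\lambda_n/q]\to h/\alpha_l$ in $A(K)$ and $\to f_l$ in $H(\D)$, with denominator $Q_q$. Since Pad\'e approximants sharing a common denominator $Q_q$ add linearly, one would then have
$$
[g;\lambda_n/q] \;=\; \sum_{j\geq 0}\alpha_j\,[f_j;\lambda_n/q],
$$
provided the tail $\sum_{j>l}\alpha_j[f_j;\lambda_n/q]$ makes sense and can be controlled.

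\textbf{Tail control and conclusion.} The tail control is the heart of the argument, because $(\lambda_n)$ was chosen before $f_j$ ($j>l$) was constructed. I would handle it by a diagonal extraction: refine $(\lambda_n)$, still denoted the same, so that $\lambda_n\in\mu_n^{i_0}$ for every $n$, which is possible since the $\mu_n^{i_0}$ are nested unbounded subsequences of $\mu$. Along this refined sequence, for each fixed $j$ the indices $n\geq j$ lie in $\mu_j^{i_0}$, so by the inductive conditions $[f_j;\lambda_n/q]$ has denominator $Q_q$, converges to $0$ in $A(K_{i_0})$, and to $f_j$ in $H(\D)$. To reconcile this diagonalization with the universality of $f_l$, I would strengthen the inductive step by enumerating within the construction a countable family of target pairs $(K,h)$, in the spirit of the proof of Theorem \ref{main-thm-Q}, so that a single diagonal subsequence simultaneously realizes $[f_l;\cdot]\to h/\alpha_l$ on $K$ and $[f_j;\cdot]\to 0$ on $K_{i_0}$ for every $j>l$. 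Combined with the decay $\|f_j\|_{L_k}\leq 2^{-j}$ and the boundedness of $1/Q_q$ on $K$ (valid because $a\notin K$), this forces the tail to be uniformly small on $K$ and convergent to $\sum_{j>l}\alpha_j f_j$ in $H(\D)$. Hence $[g;\lambda_n/q]\to h$ in $A(K)$ and $\to g$ in $H(\D)$, and the irreducible denominator equals some $Q_p$ with $0\leq p\leq q$ for infinitely many $n$, so $g\in\UU_{Q_p}^{\mu}(\D)\subset\tilde\UU_{Q_q}^{\mu}(\D)$. The chief difficulty is the coherent interleaving of universality and diagonalization; once this is set up, the rest is bookkeeping.
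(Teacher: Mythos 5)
There is a genuine gap, and it sits exactly where you locate "the heart of the argument": the identification $[g;\lambda_n/q]=\sum_{j}\alpha_j[f_j;\lambda_n/q]$ for an \emph{infinite} expansion. This identity only holds if, for each $n$, the Taylor coefficients of the tail $\sum_{j>J}\alpha_jf_j$ vanish up to order $\lambda_n+q$ for some finite $J$ whose terms are all under control at degree $\lambda_n$; i.e.\ you need the valuations $N_j$ of the later blocks to exceed $\lambda_n+q$. But the $N_j$ are fixed at construction time, while $(\lambda_n)$ is chosen \emph{a posteriori}: it is whatever subsequence of $\mu_l^{i_0}$ happens to realize $[f_l;\cdot]\to h/\alpha_l$ (and $h$, $\alpha_l$ depend on $g$, which is only known after the whole induction). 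Nothing prevents $\lambda_n$ from being astronomically larger than $N_{n+1}$, so blocks $f_j$ with $j>n$ pollute the Taylor window of $g$ around degree $\lambda_n$, and $[g;\lambda_n/q]$ need not exist with denominator dividing $Q_q$, let alone converge to $h$. Your diagonal refinement $\lambda_n\in\mu_n^{i_0}$ controls $[f_j;\lambda_n/q]$ only for $j\le n$ and simultaneously destroys the guarantee $[f_l;\lambda_n/q]\to h/\alpha_l$, since the universality subsequence for the target $h/\alpha_l$ has no reason to meet the later $\mu_j^{i_0}$ in an infinite set. The sentence "strengthen the inductive step by enumerating target pairs so that a single diagonal subsequence simultaneously realizes\ldots" is precisely the missing proof: carried out honestly, it forces you to fix the Padé degrees and the approximation targets \emph{inside} the construction, coherently for all $k$ at once, which is a different construction from the one you describe. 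Two further quantitative points are missing: you need bounds on $[f_j;\lambda_n/q]$ over the exterior compact $K_{i_0}$ that are summable in $j$ (convergence to $0$ for each fixed $j$ is not enough to kill the tail $\sum_{j>l}\alpha_j[f_j;\lambda_n/q]$), and you need a uniform bound on the coefficients $\alpha_j$ (your normalization $\|f_k\|_{L_k}\le 2^{-k}$ does not give it; it even makes large $\alpha_k$ easier).

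For comparison, the paper resolves exactly these issues by a double-indexed scheme \`a la Bayart--Charpentier--Menet: it builds polynomials $(u_k)$, $(g_{n,k})$, $(f_{n,k})$ in lexicographic order, so that at each stage \emph{all} the partial versions $f_{n,k}$ are updated with blocks whose valuations exceed a common element of $\mu$ beyond the maximum of all current degrees plus $q+1$. Consequently the Pad\'e degrees used for universality are hard-wired (the smallest $p_n\ge\deg g_{j,k_0}$), at these degrees the truncation of $f=\sum a_kf_k$ is \emph{exactly} a finite rational combination with denominator $Q_q$, the targets are the enumerated polynomials $P_{\phi(v_j)}$ on the enumerated compacts $K_{\psi(v_j)}$ (universality for arbitrary $h$ then follows by density, not by an a posteriori subsequence), the tail on the exterior compacts is controlled by the uniform bounds $\|f_{n,k}\|_{K_{\psi(n+1)}}\le\eta_n d(a,K_{\psi(n+1)})^q$ with $\eta_n\downarrow 0$ fast, and the coefficient bound $|a_k|\le 2B\|\sum a_ku_k\|_{L_0}$ together with closedness of the span comes from the basic-sequence perturbation lemmas (Lemmas \ref{basic-sequence} and \ref{lem-basic}). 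Your single-index induction plus diagonal extraction does not reproduce this simultaneous control, so as written the argument does not go through.
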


The first construction of an infinite dimensional closed subspace of universal Taylor series was given in \cite{Bay}. Bayart's proof has been extended to an abstract setting in Banach spaces in \cite{CH} and then in Fr\'echet spaces in \cite{Men}. Our proof will consist in a slight adaptation of the method presented in these papers.
It is now standard and uses different ingredients, the first one being
the notion of a basic sequence, see e.g. \cite{Men,Pet}. A sequence $\left(u_n\right)_n$ in a Fr\'echet space $X$ (over a field $\mathbb{K}$) is a \emph{basic sequence} if it is a Schauder basis of $\text{clos}(\text{span}(\{u_n,\,n\geq 0\}))$, i.e. if any element $x\in \text{clos}(\text{span}(\{u_n,\,n\geq 0\}))$ can be uniquely written in $X$ as a series $\sum_n a_nu_n$ for some $a_n\in \mathbb{K}$, $n\in \N$. If $X$ is a Fr\'echet space with a continuous norm and if $\left(p_n\right)_n$ is a sequence of increasing continuous norms defining its topology (the existence of which is equivalent to the existence of a continuous norm in $X$), we shall denote by $X_n$ the normed space $X$ endowed with the topology of the norm $p_n$.

The next result gives a criterion to check that a sequence of elements in a Fr\'echet space with a continuous norm is basic.

\begin{lemma}[{\cite[Lemme 2.2]{Men}}]%\label{crit-basic-sequence}
Let $X$ be a Fr\'echet space (over the field $\mathbb{K}=\R \text{ or } \C$) with a continuous norm, $\left(p_n\right)_n$ an increasing sequence of continuous norms defining its topology and $\left(\varepsilon _n\right)_n$ a sequence of positive real numbers such that $B=\prod _n (1+\varepsilon _n)<\infty$. If $\left(u_n\right)_n$ is a sequence of elements in $X$ satisfying for every $n\in \N$, every $0\leq j\leq n$, and every $a_1,\ldots , a_{n+1} \in \mathbb{K}$
$$p_j\left(\sum_{k=0}^{n}a_ku_k\right)\leq (1+\varepsilon _n)p_j\left(\sum_{k=0}^{n+1}a_ku_k\right)$$
then $\left(u_n\right)_n$ is a basic sequence in each $X_n$, $n\in \N$, and in $X$.
\end{lemma}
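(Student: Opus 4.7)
The plan is to follow the standard scheme for basic-sequence criteria: iterate the hypothesis to obtain uniform boundedness of the formal partial-sum projections, extend them by continuity, and then deduce convergence of expansions via a $3\varepsilon$-type argument. First I would iterate the inequality: for any $m \geq n \geq j$ and scalars $a_0,\dots,a_m$, applying the hypothesis $m-n$ times gives
$$p_j\Bigl(\sum_{k=0}^{n}a_k u_k\Bigr) \leq \prod_{\ell=n}^{m-1}(1+\varepsilon_\ell)\, p_j\Bigl(\sum_{k=0}^{m}a_k u_k\Bigr) \leq B\, p_j\Bigl(\sum_{k=0}^{m}a_k u_k\Bigr).$$
Setting $E=\mathrm{span}\{u_k\}_{k\geq 0}$, this shows that the algebraic partial-sum map $P_n(\sum a_k u_k):=\sum_{k\leq n} a_k u_k$ is well defined on $E$: if $\sum a_k u_k = 0$, applied iteratively the bound forces $p_j(\sum_{k=0}^{j}a_k u_k)=0$ for every $j$, and since $p_j$ is a norm, a descending induction yields $a_k=0$ for all $k$ (provided $u_k \neq 0$, which we may assume without loss since otherwise the sequence cannot be basic). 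Moreover $P_n$ is $p_j$-continuous with norm $\leq B$ whenever $j \leq n$.

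Next I would extend the projections to the appropriate closures. Let $X_j$ denote $X$ equipped with $p_j$, let $F_j$ be the $X_j$-closure of $E$, and $F=\overline{E}^{X}$. Since the Fr\'echet topology of $X$ is finer than each $p_j$-topology, $F \subset F_j$. For $n \geq j$, the $p_j$-continuous extension of $P_n$ yields a bounded operator $P_n^{(j)}:F_j \to F_j$ of norm $\leq B$, with range contained in the finite-dimensional space $V_n:=\mathrm{span}(u_0,\dots,u_n)$. I would then check that for any $j,j'\leq n$ these extensions agree on $F$: given $x\in F$ and $x_k\to x$ in $X$ with $x_k\in E$, the sequence $P_n x_k$ lies in the finite-dimensional space $V_n$ on which all Hausdorff vector topologies coincide, so the $p_j$- and $p_{j'}$-limits of $P_n x_k$ are equal. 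This produces well-defined maps $P_n:F\to V_n$ satisfying $p_j(P_n x)\leq B\, p_j(x)$ for all $j\leq n$, and, combined with norm-equivalence on $V_n$, one sees that each $P_n$ is even continuous as an endomorphism of $X$ (and of each $X_j$).

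The convergence $P_n x\to x$ in $X$ is then a density argument: fixing $j$ and $\delta>0$, pick $y=\sum_{k=0}^{N} b_k u_k\in E$ with $p_j(x-y)<\delta$. For $n\geq \max(N,j)$, $P_n y=y$, hence
$$p_j(P_n x - x)\leq p_j(P_n(x-y))+p_j(y-x)\leq (B+1)\delta.$$
This gives $P_n x\to x$ in every $X_j$, hence in $X$. Defining $a_n$ as the unique scalar with $(P_n-P_{n-1})x=a_n u_n$, we conclude $x=\sum_{k\geq 0} a_k u_k$ in $X$ (and in each $X_j$). Uniqueness of the expansion follows from the continuity of each $P_n$ on $X$: if $\sum a_k u_k=\sum a'_k u_k$, applying $P_n-P_{n-1}$ to both sides yields $a_n u_n=a'_n u_n$, hence $a_n=a'_n$.

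The only genuinely delicate step is the consistent extension of the projections in the second paragraph: a priori one obtains an extension in each $X_j$, and one must verify that these all restrict to the same map on $F$. This is exactly where the finite-dimensionality of $V_n$ (so that all norms are equivalent there) enters in an essential way; once that compatibility is established, the rest is a straightforward reduction to the Banach-space basic-sequence argument applied uniformly in $j$.
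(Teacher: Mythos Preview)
The paper does not give a proof of this lemma: it is simply quoted from \cite[Lemme 2.2]{Men} and used as a black box in the spaceability argument. There is therefore no proof in the paper to compare your attempt against.

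That said, your argument is correct and is the standard one (and presumably close to what appears in \cite{Men}). The key steps --- iterating the hypothesis to get the uniform bound $p_j(P_n x)\leq B\,p_j(x)$ for $j\leq n$ on $E$, using that $p_j$ is a genuine norm to deduce linear independence of the $u_k$, extending the $P_n$ by continuity, and the $(B+1)\delta$ density argument for $P_n x\to x$ --- are all handled properly. Your identification of the one delicate point is accurate: in a Fr\'echet space the a priori extensions $P_n^{(j)}$ live in different completions, and it is precisely the finite-dimensionality of $V_n=\mathrm{span}(u_0,\dots,u_n)$ (on which all the $p_j$ are equivalent) that forces these extensions to agree on $F$ and makes each $P_n$ continuous for the full Fr\'echet topology, not just for $p_j$ with $j\leq n$.
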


The infimum of the constant $B=\prod _n (1+\varepsilon _n)$ satisfying the above is called the constant of basicity of $\left(u_n\right)_n$. A useful tool to construct convenient basic sequences in Fr\'echet spaces with continuous norm is as follows.

\begin{lemma}[{\cite[Lemme 2.3]{Men}}]\label{basic-sequence}Let $X$ be a Fr\'echet space (over the field $\mathbb{K}=\R \text{ or } \C$) with a continuous norm, $\left(p_n\right)_n$ an increasing sequence of continuous norms defining its topology and $M$ an infinite dimensional subspace of $X$. Then for every $\varepsilon >0$, every $u_0,\ldots,u_n \in X$, there exists $u_{n+1}\in M$ such that $p_1\left(u_{n+1}\right)=1$ and such that for every $0\leq j\leq n$, for every $a_0,\ldots,a_{n+1} \in \mathbb{K}$ we have
$$p_j\left(\sum_{k=0}^{n}a_ku_k\right)\leq (1+\varepsilon)p_j\left(\sum_{k=0}^{n+1}a_ku_k\right).$$
\end{lemma}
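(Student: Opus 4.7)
The plan is to carry out a standard Mazur-type construction: use a Hahn-Banach separation combined with a finite net on the (compact) unit sphere of the finite-dimensional span $E := \text{span}(u_0,\ldots,u_n)$, and then exploit the infinite dimensionality of $M$ to pick $u_{n+1}$ inside the intersection of finitely many kernels.

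First, I would reduce the desired inequality to a uniform bound. Writing $y = \sum_{k=0}^{n}a_k u_k \in E$ and $a = a_{n+1}$, the inequality reads $p_j(y) \leq (1+\varepsilon)\, p_j(y + a\, u_{n+1})$. The cases $a = 0$ and $y = 0$ are trivial; for $y \neq 0$ and $a \neq 0$, dividing by $p_j(y)$ and setting $z := y/p_j(y) \in S_j := \{w \in E : p_j(w) = 1\}$ and $b := a/p_j(y)$, the inequality becomes
\[
p_j\bigl(z + b\, u_{n+1}\bigr) \geq \frac{1}{1+\varepsilon}, \qquad \forall\, j \in \{0,\ldots,n\},\ \forall\, z \in S_j,\ \forall\, b \in \mathbb{K}.
\]
Thus it is enough to produce one nonzero $u_{n+1} \in M$ for which this holds uniformly in $j$.

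Next, set $\delta := \varepsilon/(1+\varepsilon)$. For each $j \in \{0,\ldots,n\}$, the restriction of $p_j$ to the finite-dimensional space $E$ is a norm, so the sphere $S_j$ is compact and admits a finite $\delta$-net $\{z_{j,1},\ldots,z_{j,N_j}\} \subset S_j$ with respect to $p_j$. Since $p_j$ is a continuous norm on $X$, the Hahn-Banach theorem furnishes, for every pair $(j,i)$, a continuous linear form $\phi_{j,i} \in X^{*}$ with $\phi_{j,i}(z_{j,i}) = 1$ and $|\phi_{j,i}(x)| \leq p_j(x)$ for all $x \in X$ (extend the form $\lambda z_{j,i} \mapsto \lambda$ defined on $\text{span}(z_{j,i})$). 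The closed subspace $H := \bigcap_{j,i} \ker \phi_{j,i}$ has finite codimension in $X$; hence $M \cap H$ has finite codimension in $M$, and since $\dim M = \infty$ it is nonzero. Pick any nonzero element of $M \cap H$ and rescale it (using that $p_1$ is a norm) to get $u_{n+1} \in M \cap H$ with $p_1(u_{n+1}) = 1$.

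Finally, to verify the reduced inequality, fix $j$, $b \in \mathbb{K}$ and $z \in S_j$, and choose $i$ with $p_j(z - z_{j,i}) \leq \delta$. Since $\phi_{j,i}(u_{n+1}) = 0$ and $|\phi_{j,i}(\cdot)| \leq p_j(\cdot)$,
\[
p_j(z + b\, u_{n+1}) \geq |\phi_{j,i}(z + b\, u_{n+1})| = |\phi_{j,i}(z)| \geq |\phi_{j,i}(z_{j,i})| - |\phi_{j,i}(z - z_{j,i})| \geq 1 - \delta = \frac{1}{1+\varepsilon},
\]
which is exactly what was needed. The only delicate point is to handle all indices $j \in \{0,\ldots,n\}$ simultaneously, which is why all finitely many functionals $\phi_{j,i}$ are assembled into a single finite-codimension subspace $H$ before extracting $u_{n+1}$ from $M$; beyond this bookkeeping I do not foresee any real obstacle, since the existence of a continuous norm on $X$ provides enough continuous linear forms for the Hahn-Banach step and the rest is finite-dimensional compactness.
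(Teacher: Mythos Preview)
Your argument is correct: this is exactly the classical Mazur construction --- finite $\delta$-nets on the unit spheres of the finite-dimensional span $E$ with respect to each $p_j$, Hahn--Banach functionals dominated by $p_j$ supporting each net point, and then the choice of $u_{n+1}$ in the intersection of the finitely many kernels with $M$. The verification $p_j(z+bu_{n+1})\geq 1-\delta=1/(1+\varepsilon)$ is clean, and the normalization $p_1(u_{n+1})=1$ is legitimate because $p_1$ is a genuine norm.

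As for comparison with the paper: there is nothing to compare. The paper does not prove this lemma; it simply quotes it from \cite[Lemme~2.3]{Men} and uses it as a black box in the proof of Theorem~\ref{spaceability}. Your write-up supplies the standard proof that Menet's paper (and the Banach-space literature before it) gives, so it is fully consistent with what the authors rely on.
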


We shall also need the notion of equivalent basic sequences.

\begin{definition}Two basic sequences $\left(u_n\right)_n$ and $\left(f_n\right)_n$ of a Fr\'echet space $X$ are \emph{equivalent} if, for any $a_1,a_2,\ldots \in \mathbb{K}$, $\sum _n a_nu_n$ converges in $X$ if and only if $\sum _n a_nf_n$ converges in $X$.
\end{definition}

The following lemma is also useful.

\begin{lemma}[{\cite[Lemme 2.5]{Men}}]\label{lem-basic}
Let $X$ be a Fr\'echet space (over the field $\mathbb{K}=\R \text{ or } \C$) with a continuous norm, $\left(p_n\right)_n$ an increasing sequence of continuous norms defining its topology and let $K\geq 1$. If $\left(u_k\right)_k$ is a basic sequence in $X$ such that for every $k$, $p_1(u_k)=1$, and for every $n\in \N$, the sequence $\left(u_k\right)_{k\geq n}$ is basic in $X_n$ with constant of basicity less than $B$, then every sequence $\left(f_k\right)_k$ in $X$ satisfying
$$\sum _n 2Bp_n\left(u_n-f_n\right)<1$$
is basic in each $X_n$, $n\in \N$, and in $X$.
Moreover $\left(u_k\right)_k$ and $\left(f_k\right)_k$ are equivalent in the completion of each $X_n$, $n\in \N$, and in $X$.
\end{lemma}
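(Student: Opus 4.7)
The strategy is to adapt Bayart's spaceability construction \cite{Bay}, formalized in \cite{CH,Men}, by coupling the basic-sequence machinery of Lemmas \ref{basic-sequence} and \ref{lem-basic} with the density statement of Corollary \ref{last-coro-gene}. Fix an exhaustion $(L_n)_n$ of $\D$ by non-empty compact sets and set $\pi_n(f)=\|f\|_{L_n}$, so that $(\pi_n)_n$ is an increasing sequence of continuous seminorms defining the topology of $H(\D)$, with $\pi_1$ a continuous norm. Let $(K_i)_i$ be a family of compact subsets of $\C\setminus(\D\cup\{a\})$, with connected complements, as in Lemma \ref{lemma-geo-Q}.

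The construction proceeds inductively. We build a sequence $(f_k)_{k\geq 0}\subset\UU_{Q_q}^{\mu}(\D)$ and, for each $i\in\N$, nested unbounded subsequences $\mu_k^i\subset\mu_{k-1}^i\subset\mu$ (starting from $\mu_0^i=\mu$), subject to two simultaneous requirements. First, the basicity inequality
$$\pi_j\Big(\sum_{k=0}^{m}\alpha_k f_k\Big)\leq (1+\varepsilon_m)\pi_j\Big(\sum_{k=0}^{m+1}\alpha_k f_k\Big),\qquad 0\leq j\leq m,$$
with $\prod_m(1+\varepsilon_m)<\infty$. Second, a compatibility condition modelled on items (4)--(8) in the proof of Theorem \ref{alg-gene}: for every $l\geq 0$, every $i\in\N$ and every $k\leq l$, the Padé approximant $[f_k;\lambda/q]$ for $\lambda\in\mu_l^i$ exists with unreduced denominator $Q_q$, converges to $f_k$ in $H(\D)$, and converges to $0$ in $A(K_i)$ when $k<l$. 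At step $k$, Lemma \ref{basic-sequence} provides a target $u_k\in H(\D)$ satisfying the basicity inequality, while Corollary \ref{last-coro-gene}, applied to the family $(\mu_{k-1}^i)_i$, yields $f_k\in\bigcap_i\UU_{Q_q}^{\mu_{k-1}^i}(\D,K_i)$ arbitrarily close to $u_k$ in $\pi_1,\dots,\pi_k$. Choosing the perturbation small enough to invoke Lemma \ref{lem-basic} makes $(f_k)_k$ basic in $H(\D)$ and equivalent to $(u_k)_k$; the subsequences $\mu_k^i$ are then extracted from $\mu_{k-1}^i$ so that the compatibility requirement holds for every $k'\leq k$, which is possible since each $f_{k'}$ is Padé universal along any subsequence of $\mu_{k-1}^i$.

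Set $F=\overline{\text{span}}(f_k,k\geq 0)$, an infinite dimensional closed subspace of $H(\D)$ in which every element has a unique expansion $g=\sum_k a_k f_k$ converging in $H(\D)$. For $g\neq 0$, let $l$ be the smallest index with $a_l\neq 0$. Given $K\subset\C\setminus(\D\cup\{a\})$ with connected complement, $h\in A(K)$, and $i_0$ with $K\subset K_{i_0}$, use $f_l\in\UU_{Q_q}^{\mu_{l-1}^{i_0}}(\D,K_{i_0})$ to extract $(\lambda_n)_n\subset\mu_{l-1}^{i_0}$ with $[f_l;\lambda_n/q]\to h/a_l$ in $A(K)$ and $\to f_l$ in $H(\D)$, all with unreduced denominator $Q_q$; by compatibility, $[f_k;\lambda_n/q]\to 0$ in $A(K)$ and $\to f_k$ in $H(\D)$ for every $k<l$. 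Since all these approximants share the denominator $Q_q$ of degree $q$, linearity yields, for every truncation $g_N=\sum_{k\leq N}a_kf_k$ with $N\geq l$,
$$[g_N;\lambda_n/q]=\sum_{k=0}^{N}a_k[f_k;\lambda_n/q].$$
The main obstacle is passing from these truncations to $g$ itself, since the closed span can contain genuinely infinite expansions. The point is that $g_N\to g$ in $H(\D)$ forces termwise convergence of Taylor coefficients, and the Jacobi determinantal formulas \eqref{deter} then show that $[g;\lambda_n/q]$ exists with unreduced denominator $Q_q$ for each $n$, coinciding with the termwise limit of the right-hand side above, provided a diagonal extraction in $(\lambda_n)$ is made so that the tail contribution (controlled by the basicity constant and the $\pi_n$-smallness of $f_k$ built in at step 2) is negligible. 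One concludes $[g;\lambda_n/q]\to h$ in $A(K)$ and $\to g$ in $H(\D)$; the irreducible denominator of $[g;\lambda_n/q]$ stabilizes along an infinite subsequence at some $Q_p$ with $0\leq p\leq q$, whence $g\in\UU_{Q_p}^{\mu}(\D)\subset\tilde\UU_{Q_q}^{\mu}(\D)$.
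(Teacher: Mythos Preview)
Your proposal does not address the stated lemma at all. Lemma~\ref{lem-basic} is a purely functional-analytic statement about basic sequences in Fr\'echet spaces: if $(u_k)_k$ is a basic sequence with the stated normalization and basicity bounds, then any sequence $(f_k)_k$ satisfying $\sum_n 2Bp_n(u_n-f_n)<1$ is also basic and equivalent to it. The paper does not prove this lemma; it is quoted from \cite[Lemme~2.5]{Men} and used as a black box in the proof of Theorem~\ref{spaceability}.

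What you have written is instead a proof sketch of Theorem~\ref{spaceability} (spaceability of $\tilde\UU_{Q_q}^{\mu}(\D)$), in which Lemma~\ref{lem-basic} is merely one ingredient. Your text explicitly \emph{invokes} the lemma (``Choosing the perturbation small enough to invoke Lemma~\ref{lem-basic} makes $(f_k)_k$ basic\ldots'') rather than establishing it. A proof of Lemma~\ref{lem-basic} would have to show, from the smallness hypothesis alone, that the linear map $u_k\mapsto f_k$ extends to a topological isomorphism of $\overline{\mathrm{span}}(u_k)$ onto $\overline{\mathrm{span}}(f_k)$ in each $X_n$ and in $X$ --- the standard perturbation-of-basis argument via $I-(I-T)$ with $\|I-T\|<1$. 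None of this appears in your text; Pad\'e approximants, the sets $\UU_{Q_q}^{\mu}(\D)$, and Corollary~\ref{last-coro-gene} are entirely irrelevant to the lemma as stated.
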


Theorem \ref{spaceability} is stated in the space $H(\D)$, where $(z^{n})_{n\geq 0}$ can be taken as an explicit basic sequence. Hence the above lemmas on basic sequences are not crucial in this case. However, in the more general space $H(\Omega)$, $\Omega$ a simply connected domain in $\C$, the existence of a basic sequence is no more obvious, and these lemmas would be mandatory there.
\begin{proof}[Proof of Theorem \ref{spaceability}]Let \\
-- $\left(L_n\right)_n$ be an exhaustion of compact subsets of $\D$,\\
-- $\left(K_n\right)_n \subset \C \setminus (\D \cup \{a\})$, a family of compact sets given by Lemma \ref{lemma-geo},\\
-- $\left(P_n\right)_n$ an enumeration of polynomials with coefficients in $\Q+i\Q$,\\
-- $\phi,\psi$, maps from $\N$ to $\N$ such that for any pair $(l,r)\in \N\times \N$, there exist infinitely many integers $n$ with $(\phi(n),\psi(n))=(l,r)$. \\
Moreover, we denote by $\prec$ the (non strict) lexicographical order on $\N\times \N$.

Three sequences $\left(u_{k}\right)_{k\geq 0}$, $\left(g_{n,k}\right)_{n\geq k\geq 0}$ and $\left(f_{n,k}\right)_{n\geq k\geq 0}$ of polynomials in $H(\D)$ can be built inductively, from which the desired infinite dimensional closed linear subspace of $H(\D)$ is derived. The construction of these sequences is described in all details in \cite{Men}, see also \cite{CH}. Figure \ref{diagr} illustrates the ordering of the construction, which follows the lexicographical order of the indices. The construction 
would make use of Lemma \ref{basic-sequence} and Lemma \ref{mergelyan}, as indicated in the figure.
%follows the lexicographical order as in \cite[Theorem ?]{Men}, see Figure \ref{diagr}. in which paper all the details are given (see also \cite{CH}). It is illustrated by the following diagram in which the numbers inside boxes corresponds to the steps.
\begin{figure}[htb]
\centering
\begin{tikzpicture}[->,thick,>=stealth']

  \tikzstyle{state} = [draw, very thick, fill=white, rectangle, minimum height=3em, minimum width=5em]
  \tikzstyle{stateEdgePortion} = [black,thick];

% double arrow style
\tikzstyle{doublearrow}=[draw, color=black!75, draw=black!50, thick, double distance=2pt, ->, >=stealth]

  %
  % Position States
  %
  %1ere ligne
  \node[state, name=A11, text width=2cm] 
  { $\hphantom{abcd}u_{0}$ $\,\boxed{1}$\\$\hphantom{a}\swarrow$ \\$g_{0,0}\to f_{0,0}$};
  %2ieme ligne
\node[state, name=A21, below of=A11, node distance=2.5cm,text width=1.9cm] 
  {$\hphantom{abcdefg}\boxed{2}$\\$g_{1,0}\to f_{1,0}$};
\node[state, name=A22, right of=A21, node distance=4cm, text width=1.9cm] 
{$\hphantom{abcd}u_{1}$ $\,\boxed{3}$\\$\hphantom{a}\swarrow$\\$g_{1,1}\to f_{1,1}$};
%3ieme ligne
\node[state, name=A31, below of=A21, node distance=2.5cm,text width=1.9cm] 
  {$\hphantom{abcdefg}\boxed{4}$\\$g_{2,0}\to f_{2,0}$};
\node[state, name=A32, below of=A22, node distance=2.5cm,text width=1.9cm] 
  {$\hphantom{abcdefg}\boxed{5}$\\$g_{2,1}\to f_{2,1}$};  
\node[state, name=A33, right of=A32, node distance=4cm, text width=1.9cm] 
{$\hphantom{abcd}u_{2}$ $\,\boxed{6}$\\$\hphantom{a}\swarrow$\\$g_{2,2}\to f_{2,2}$};
%4ieme ligne
\node[state, name=A41, below of=A31, node distance=2.5cm,text width=1.9cm] 
  {$\hphantom{abcdefg}\boxed{7}$\\$g_{3,0}\to f_{3,0}$};
\node[state, name=A42, below of=A32, node distance=2.5cm,text width=1.9cm] 
  {$\hphantom{abcdefg}\boxed{8}$\\$g_{3,1}\to f_{3,1}$};  
\node[state, name=A43, below of=A33, node distance=2.5cm,text width=1.9cm] 
  {$\hphantom{abcdefg}\boxed{9}$\\$g_{3,2}\to f_{3,2}$};
\node[state, name=A44, right of=A43, node distance=4cm, text width=2.05cm] 
{$\hphantom{abcd}u_{3}$ $\,\boxed{10}$\\$\hphantom{a}\swarrow$\\$g_{3,3}\to f_{3,3}$};
%
% Path
%
\path (A11.25) edge[thick, out=0, in=90] node[auto]{Lemma \ref{basic-sequence}} (A22.80);
\path (A11) edge[doublearrow] node[auto]{Lemma \ref{mergelyan}}  (A21);
%\path (A21.east) edge[thick, bend left=70, out=70] node[pos=0.6, above]{Lemma 3.14} (A22.north);
\path (A21) edge[doublearrow] node[auto]{Lemma \ref{mergelyan}}  (A31);
%\path (A22) edge[thick] (A31.60);
\path (A22.25) edge[thick, out=0, in=90] node[auto]{Lemma \ref{basic-sequence}} (A33.80);
\path (A22) edge[doublearrow] node[auto]{Lemma \ref{mergelyan}}  (A32);
%\path (A31.east) edge[thick, bend left=70, out=70] node[pos=0.5, below]{Lemma 3.14} (A32);
%\path (A32.east) edge[thick, bend left=70, out=70] node[pos=0.6, above]{Lemma 3.14} (A33.north);
\path (A33.25) edge[thick, out=0, in=90] node[auto]{Lemma \ref{basic-sequence}} (A44.80);
\path (A31) edge[doublearrow] node[auto]{Lemma \ref{mergelyan}}  (A41);
\path (A32) edge[doublearrow] node[auto]{Lemma \ref{mergelyan}}  (A42);
\path (A33) edge[doublearrow] node[auto]{Lemma \ref{mergelyan}}  (A43);
  \end{tikzpicture}
  \caption{The first steps in the construction of the sequences $(u_{k})_{k\geq 0}$, $(g_{n,k})_{n\geq k\geq 0}$ and $(f_{n,k})_{n\geq k\geq 0}$. Numbers in boxes correspond to the numbering of these steps.}
\label{diagr}
\end{figure}
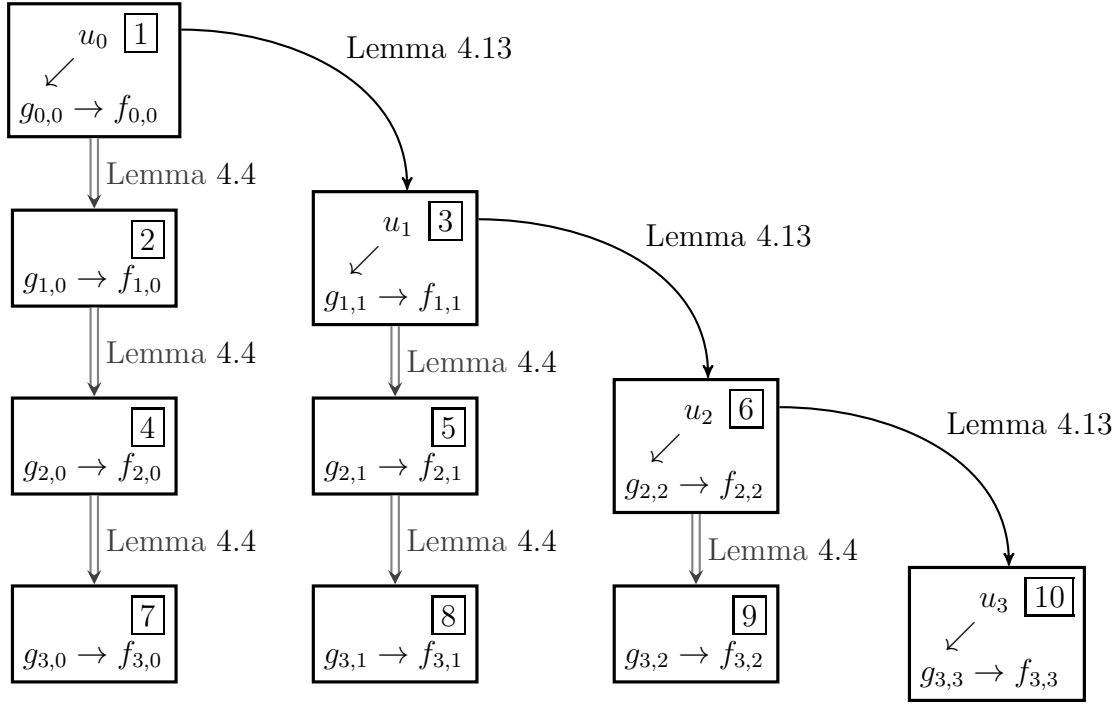

Here, we only state the properties of the 
% similarly as in the proof of Theorem \ref{main-thm-Q}, it makes that the 
resulting sequences. For $\left(\eta _n\right)_n\subset \R_+$, a well-chosen sequence decreasing to $0$ fast enough and for $\left(u_k\right)_k$, a basic sequence of $H(\D)$, obtained from Lemma \ref{basic-sequence} (in the disk $\D$, $(u_{k})_{k}$ could be a subsequence of $(z^{n})_{n}$), the sequences $\left(u_{k}\right)_{k\geq 0}$, $\left(g_{n,k}\right)_{n\geq k\geq 0}$ and $\left(f_{n,k}\right)_{n\geq k\geq 0}$ satisfy, for every $n\geq k\geq 0$:
%\smallskip{}
\begin{enumerate}
\item $\displaystyle{\left\Vert Q_{q} P_{\phi(n)}-g_{n,k}\right\Vert _{K_{\psi(n)}}\leq \eta_n\min(1,d(a,K_{\psi(n)})^q)}$;
\item $\displaystyle{\left\Vert f_{n,k}\right\Vert _{K_{\psi(n+1)}} \leq \eta_n d(a,K_{\psi(n+1)})^q}$;
\item $\displaystyle{\left\Vert f_{n+1,k}-f_{n,k}\right\Vert _{L_{n+1}}\leq \eta_n d(a,L_{n+1})^q}$;
\item $\displaystyle{\left\Vert f_{n,k}-g_{n,k}\right\Vert _{L_{n}} \leq \eta_n d(a,L_{n})^q}$;
\item $\displaystyle{\left\Vert f_{k,k}-Q_{q} u_k\right\Vert _{L_{k}} \leq \eta_k d(a,L_{k})^q}$;
\item For every $n\geq k$, $\displaystyle{g_{n+1,k}=f_{n,k}+P}$ (resp. $\displaystyle{g_{k,k}=u_k+P}$) where $P$ is a polynomial with
\begin{eqnarray*}\text{val}(P) & \geq & \min \left\{p_n;\,p_n\geq \max _{(n',k')\prec(n,k)}\text{deg}\left(f_{n',k'}\right)+q+1\right\}\\
( \text{resp.} & \geq & \min \left\{p_n;\,p_n\geq \text{deg}\left(u_k\right)+q+1\right\});
\end{eqnarray*}
\item For every $n\geq k$, $\displaystyle{f_{n,k}=g_{n,k}+R}$ where $R$ is a polynomial with
$$\text{val}(R)\geq \min \left\{p_n;\,p_n\geq \max _{(n',k')\prec(n,k)}\text{deg}\left(g_{n',k'}\right)+q+1\right\};$$
%\item $\displaystyle{\text{val}(g_{n,k})\geq \min \left\{p_n;\,p_n\geq \max _{(n',k')\prec(n,k)}\text{deg}\left(f_{n',k'}\right)+q+1\right\}$;
%\item $\displaystyle{\text{val}(f_{n,k})\geq \min \left\{p_n;\,p_n\geq \max _{(n',k')\prec(n,k)}\text{deg}\left(g_{n',k'}\right)+q+1\right\}$;
\item $g_{n,k}$ does not vanish at $a$;
\item $\displaystyle{\left\Vert u_k\right\Vert _{L_{0}} =1}$.
\end{enumerate}
%\smallskip{}
For every $k\geq 0$, we set 
$$\tilde{f_k}=\sum _{n\geq k}\left(f_{n+1,k}-f_{n,k}\right)+f_{k,k}\quad\text{and}\quad f_k=\tilde{f_k}/Q_{q}.$$
By item 3., we deduce that $f_k\in H(\D)$ and that $f_k=\lim _{n\rightarrow \infty} f_{n,k}/Q_{q}$. Moreover by item 5. and Lemma \ref{lem-basic}, $\left(f_k\right)_k$ is a basic sequence of $H(\D)$ equivalent to $\left(u_k\right)_k$. Thus we define the closed subspace
$$F:=\text{clos}(\text{span}(f_k,\,k\geq 0))=\Biggl\{\sum _{k=0}^{\infty}a_kf_k
%,\,\sum _{k=0}^{\infty}a_kf_k 
\text{ which converges in } H(\D)\Biggr\}.$$
Since, by items 6. and 7., the $f_k$'s are linearly independent, $F$ is infinite dimensional. It remains to check that every nonzero element $f$ of $F$ is in $\tilde\UU^{\mu}_{Q_{q}}(\D)$. Fix any $j> k\geq 0$. First, arguing as in the proof of Theorem \ref{main-thm}, observe that, in view of items 6., 7. and 8., if we denote by $p_{n_j}(k)$ the smallest element of $\mu$ such that $p_{n_j}(k)\geq \text{deg}\left(g_{j,k}\right)$, we have
$$f_k\in \DD_{p_{n_j}(k),q} \text{ and } \displaystyle{[f_k;p_{n_j}(k)/q]=\frac{g_{j,k}}{Q_{q}}}.$$
Let $f=\sum _{k=0}^{\infty}a_kf_k \in F\setminus\{0\}$. From the construction, in particular the choices of valuations, we obtain that the $p_{n_j}(k)+q$ Taylor coefficients of $f$ coincide with those of
$$a_k\frac{g_{j,k}}{Q_{q}}+\sum _{k'=0}^{k-1}a_{k'}\frac{f_{j,k'}}{Q_{q}}+
\sum_{k'=k+1}^{j-1}a_{k'}\frac{f_{j-1,k'}}{Q_{q}},$$
which is a rational function whose numerator and denominator degrees are less than or equal to $p_{n_j}(k)$ and $q$ respectively. Thus the Pad\'e approximant of $f$ of type $\left(p_{n_j}(k),q\right)$ exists and equals
$$[f;p_{n_j}(k)/q]=a_k\frac{g_{j,k}}{Q_{q}}+\sum _{k'=0}^{k-1}a_{k'}\frac{f_{j,k'}}{Q_{q}}+\sum _{k'=k+1}^{j-1}a_{k'}\frac{f_{j-1,k'}}{Q_{q}}.$$
Note that the denominator of the rational fraction in the right-hand side, written in irreducible form, equals $Q_{p}$ for some $0\leq p\leq q$.

To finish we have to show that $f$ possesses universal approximation properties. 
%in the sense of Definition \ref{defi-new-class}. 
We first observe that the sequence $\left(a_k\right)_k$ is bounded by some constant $M$. Indeed since $\left(f_k\right)_k$ and $\left(u_k\right)_k$ are equivalent, the series $\sum _{k\geq 0}a_ku_k$ converges and by item 9., we have
$$\left\vert a_k\right\vert= \left\Vert a_ku_k\right\Vert _{L_0}\leq 2B\bigg\| \sum _{k\geq 0}a_ku_k\bigg\|_{L_0},$$
where $B$ stands for the constant of basicity of the basic sequence $\left(u_k\right)_k$. We set $M=2B$. Let $k_0$ be the smallest integer $k$ such that $a_k\neq 0$. Now let $K\subset \C \setminus (\D \cup \{a\})$ be a compact subset and let $r\in \N$ be such that $K\subset K_r$. We fix also a polynomial $P_l$ in the countable dense family of polynomials. By definition of $\phi$ and $\psi$ there exists an increasing sequence $\left(v_j\right)_j\subset \N$ such that $\left(\phi(v_j),\psi(v_j)\right)=(l,r)$ and $v_j > k_0$ for any $j\geq 0$. Let us denote by $p_{n_j}$ the smallest element of $\mu$ such that $p_{n_j}\geq \text{deg}\left(g_{v_j,k_0}\right)$. By the above and items 1. and 2., we have
\begin{eqnarray*}\left\Vert [f;p_{n_j}/q]-P_l\right\Vert _{K_r}& = & \left\Vert a_{k_0}\frac{g_{v_j,k_0}}{Q_{q}}+\sum _{k'=k_0+1}^{v_j-1}a_{k'}\frac{f_{v_j-1,k'}}{Q_{q}}-P_{\phi(v_j)}\right\Vert _{K_{\psi(v_j)}}\\
& \leq & \left\Vert a_{k_0}\frac{g_{v_j,k_0}}{Q_{q}}-P_{\phi(v_j)}\right\Vert _{K_{\psi(v_j)}} + \left\Vert\sum _{k'=k_0+1}^{v_j-1}a_{k'}\frac{f_{v_j-1,k'}}{Q_{q}}\right\Vert _{K_{\psi(v_j)}} \\
& \leq & M\eta_{v_j} + M \sum _{k'=k_0+1}^{v_j-1}\eta_{v_j}.
\end{eqnarray*}
Choosing $\left(\eta _n\right)_n$ decreasing to $0$ fast enough, we get that $[f;p_{n_j}/q]$ tends to $P_l$ in $A(K_{r})$ as $j$ tends to $\infty$.

It remains to show that, given $m\in \N$, $[f;p_{n_j}/q]$ tends to $f$ uniformly on $L_m$. For $j$ large enough, using items 3. and 4. (see also Figure \ref{diagr}), one can check that
\begin{eqnarray*}\left\| [f;p_{n_j}/q]-f\right\|_{L_n}& \leq & \Biggl\| \sum _{k'\geq v_j+1}a_{k'}f_{k'}\Biggr\|_{L_n} 
+ \Biggl\|\sum _{k'=k_0}^{v_j}a_{k'}\Bigl(\sum _{n\geq v_j+1}\frac{f_{n+1,k'}}{Q_{q}}-\frac{f_{n,k'}}{Q_{q}}\Bigr)\Biggr\|_{L_n} \\
&  & + \left\Vert a_{v_j}\left(f_{v_j,v_j}-g_{v_j,v_j}\right)\right\Vert _{L_n}\\
& \leq & \Biggl\|\sum _{k'\geq v_j+1}a_{k'}f_{k'}\Biggr\|_{L_n} + \sum _{k'=k_0}^{v_j}|a_{k'}|\Bigl(\sum _{n\geq v_j+1}\eta _n\Bigr) + \left\vert a_{v_j}\right\vert \eta _{v_j}.
\end{eqnarray*}
Again, choosing $\left(\eta _n\right)_n$ to decrease to $0$ fast enough gives our contention since $\sum _{k\geq 0}a_{k}f_{k}$ is convergent in $H(\D)$.
\end{proof}

%\begin{remark}When reading the previous proof, the introduction of the material about basic sequences may seem a bit superfluous since the sequence $\left(z^n\right)_n$ is a trivial basic sequence in $H(\D)$. Yet as announced at the beginning of Section 3, our results hold for $\D$ replaced with any simply connected domain $\Omega$, and to construct basic sequences in $H(\Omega)$ is not trivial any more. So this justifies this introduction.
%\end{remark}

%We immediately deduce from Theorems \ref{alg-gene} and \ref{spaceability} the following corollary.

%\begin{corollary}Let $\mu$ be a strictly increasing sequence of integers, $q$ an integer and $K\subset \C \setminus \D$ a compact set. The set $\UU^{[\mu,q]}(\D,K)$ is residual, algebraically generic and spaceable in $H(\D)$.
%\end{corollary}

\subsection{Genericity of Pad\'e universal series whose Pad\'e approximants have asymptotically prescribed poles}

We have seen in Theorem \ref{main-thm-Q} that the set $\UU_{Q}^{\mu}(\D)$ is meagre in $H(\D)$. However, if we only demand the Pad\'e approximants to have poles that go \emph{asymptotically} to prescribed points in $\C$, then the corresponding set of Pad\'e universal series becomes residual. 
In the sequel, we adopt the following convention.
%For $f\in \DD_{p,q}$, we denote by $(a_{p,q}^{(1)}(f),\ldots,a_{p,q}^{(q)}(f))$ the poles of $[f;p/q]$ 
When we write a tuple $(w^{(1)},\ldots ,w^{(q)})$ of complex points, we assume it is
ordered with respect to some lexicographical order $\prec _{\alpha}$ on $\R_+\times [\alpha,\alpha+2\pi[$, where each point is identified with its polar coordinates $(r,\vartheta)\in \R_+\times [\alpha,\alpha+2\pi[$. The argument $\alpha\in[0,2\pi[$ is chosen to be different from each of the arguments of the $w^{(i)}$, $1\leq i\leq q$.
%Here we adopt the following convention: 
%When $\text{deg}(Q_{p,q}(f))=r<q$, we agree to write $a_{p,q}^{(s)}=\infty$ for every $r<s\leq q$.

\begin{theorem}\label{2nd-thm-q}Let $\mu$ be an unbounded sequence of integers and $q\geq 1$ an integer. There exists a function $f$ in $H(\D)$ such that for every tuple $W=(w^{(1)},\ldots ,w^{(q)})$ of points in $\C\setminus \D$ ordered as above, where the argument $\alpha$ is different from the argument of $w^{(i)}$, $1\leq i\leq q$, for every compact set $K\subset \C \setminus (\D\cup W)$ with $\C \setminus K$ connected, and every function $h\in A(K)$, there is a subsequence $\left(\lambda_n\right)_n$ of $\mu$ with the following properties:
\begin{enumerate}[a)]
\item $f \in \NN_{\lambda_n,q}$,
%and 
%every $a_{\lambda_n,q}^{(i)}(f)$, $1\leq i\leq q$, is finite;
%$\deg Q_{\lambda_{n},q}(f)=q$, 
for all $n\geq 1$;
\item $[f;\lambda_n/q]\rightarrow h$ in $A(K)$ as $n\rightarrow \infty$;
\item $[f;\lambda_n/q] \rightarrow f$ in $H(\D)$ as $n\rightarrow \infty$;
\item $a_{\lambda_{n}}^{(i)}\rightarrow w^{(i)}$ as $n\rightarrow \infty$, $1\leq i \leq q$, where $(a_{\lambda_{n}}^{(1)},\ldots,a_{\lambda_{n}}^{(q)})$ are the roots of $Q_{\lambda_{n},q}(f)$, ordered with respect to $\prec_{\alpha}$.
% where $\alpha\in [0,2\pi[$ is different from $\arg{w^{(i)}}$ for every $1\leq i\leq q$.
\end{enumerate}
The set $\tilde{\UU}^{\mu,q}(\D)$ of such functions is a dense $G_{\delta}$-subset of $H(\D)$.
\end{theorem}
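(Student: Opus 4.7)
The plan is to realize $\tilde{\UU}^{\mu,q}(\D)$ as a countable intersection of open dense subsets of $H(\D)$, with the density inherited from Theorem~\ref{main-thm-Q}, and then invoke the Baire Category Theorem. First, I would set up the countable indexing data: let $(W_m)_{m\geq 1}$ enumerate the $q$-tuples with coordinates in $(\Q+i\Q)\cap(\C\setminus\bar\D)$, and associate to each $W_m$ the polynomial $Q_{W_m}(z)=\prod_{i=1}^q(1-z/w_m^{(i)})$ and, via Lemma~\ref{lemma-geo-Q}, a sequence $(K_{m,j})_{j\geq 1}$ of compact subsets of $\C\setminus(\D\cup W_m)$ with connected complement. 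In addition, let $(P_l)_{l\geq 1}$ enumerate the polynomials with coefficients in $\Q+i\Q$, and let $(L_s)_{s\geq 1}$ be a compact exhaustion of $\D$. Fixing $R_m>\max_i|w_m^{(i)}|$, for positive integers $m,j,l,s$ I would then define
$$
E(m,j,l,s)=\bigcup_{k\in\mu,\,k>s}A_k(m,j,l,s),
$$
where $A_k(m,j,l,s)$ is the set of $f\in\NN_{k,q}$ satisfying $\|[f;k/q]-P_l\|_{K_{m,j}}<1/s$, $\|Q_{k,q}(f)-Q_{W_m}\|_{\bar\D_{R_m}}<1/s$, and $\|[f;k/q]-f\|_{L_s}<1/s$.

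Openness of each $A_k(m,j,l,s)$ in $H(\D)$, hence of $E(m,j,l,s)$, follows from the continuous dependence of the Hankel determinant $C_{k,q}$ and of the coefficients of $[f;k/q]$ and $Q_{k,q}(f)$ on finitely many Taylor coefficients of $f$. For density, I would apply Theorem~\ref{main-thm-Q} to the tuple $W_m$ and the polynomial $Q_{W_m}$: the resulting set $\UU^\mu_{Q_{W_m}}(\D)$ is dense in $H(\D)$, and any $f$ in it admits a subsequence $(\lambda_n)_n$ of $\mu$ along which $[f;\lambda_n/q]\to P_l$ in $A(K_{m,j})$, $[f;\lambda_n/q]\to f$ in $H(\D)$, and $Q_{\lambda_n,q}(f)=Q_{W_m}$ identically; taking any sufficiently large $n$ with $\lambda_n>s$ places $f$ in $A_{\lambda_n}(m,j,l,s)\subset E(m,j,l,s)$. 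Baire's theorem then makes $\tilde{\UU}^{\sharp}:=\bigcap_{m,j,l,s}E(m,j,l,s)$ a dense $G_\delta$ subset of $H(\D)$.

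The content of the theorem reduces to checking $\tilde{\UU}^{\sharp}\subset\tilde{\UU}^{\mu,q}(\D)$. Given $f\in\tilde{\UU}^{\sharp}$, a tuple $W=(w^{(1)},\ldots,w^{(q)})$ ordered by $\prec_\alpha$, a compact $K\subset\C\setminus(\D\cup W)$ with connected complement, and $h\in A(K)$, I would choose, for every $s\geq 1$, an index $m_s$ with $W_{m_s}$ at componentwise distance less than $\min(1/s,d(K,W)/2)$ from $W$, so that $K\subset\C\setminus(\D\cup W_{m_s})$; then $j_s$ with $K\subset K_{m_s,j_s}$ from Lemma~\ref{lemma-geo-Q}; and $l_s$ with $\|P_{l_s}-h\|_K<1/s$ via Mergelyan's theorem. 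Membership $f\in E(m_s,j_s,l_s,s)$ produces $k_s\in\mu$ with $k_s>s$ verifying the four defining estimates, and letting $s\to\infty$ combined with the triangle inequality and the convergence $Q_{W_{m_s}}\to Q_W$ in coefficients yields $[f;k_s/q]\to h$ on $K$, $[f;k_s/q]\to f$ in $H(\D)$, and $Q_{k_s,q}(f)\to Q_W$ in coefficients. Extracting a strictly increasing subsequence of $(k_s)_s$ supplies the subsequence $(\lambda_n)_n$ of $\mu$ required in a)--c).

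The delicate point, and the one I would flag as the main obstacle, is the ordered convergence of the roots in item~d). My proposal is to notice that, since $\alpha$ differs from each $\arg w^{(i)}$, for $s$ large $W_{m_s}$ is itself $\prec_\alpha$-ordered; combined with $\deg Q_{k_s,q}(f)=q$ (guaranteed by $f\in\NN_{k_s,q}$), coefficient convergence of $Q_{k_s,q}(f)$ to $Q_W$ forces convergence of the roots as multisets via classical continuity of zeros, and sorting both sides by $\prec_\alpha$ then gives the pointwise convergence $a_{\lambda_n}^{(i)}\to w^{(i)}$. A minor bookkeeping detail is that when some $w^{(i)}$ lies on the unit circle, the rational approximations $W_{m_s}$ must be picked in $\{|z|>1\}$, which is compatible with the enumeration $(W_m)_m$ described above.
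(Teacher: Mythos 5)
Your proposal follows essentially the same Baire-category route as the paper: the same countable data (an enumeration of rational tuples, the compacta of Lemma \ref{lemma-geo-Q}, rational polynomials, an exhaustion of $\D$), openness of each piece because every condition involves only finitely many Taylor coefficients, and density of each union supplied by Theorem \ref{main-thm-Q} applied to the rational tuple; the only cosmetic difference is that you impose closeness of the Pad\'e denominator to $Q_{W_m}$ where the paper imposes closeness of the ordered roots to the ordered rational tuple. Two points, however, need attention.

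First, the sentence ``the content of the theorem reduces to checking $\tilde\UU^{\sharp}\subset\tilde{\UU}^{\mu,q}(\D)$'' is not quite right: that inclusion only shows that $\tilde{\UU}^{\mu,q}(\D)$ contains a dense $G_{\delta}$ set, hence is residual and nonempty, whereas the statement asserts that $\tilde{\UU}^{\mu,q}(\D)$ is itself a $G_{\delta}$ set. For that you also need the reverse inclusion, which is easy (apply the defining property of $\tilde{\UU}^{\mu,q}(\D)$ with $W=W_m$, $K=K_{m,j}$, $h=P_l$, using that the subsequence of $\mu$ tends to infinity), but it has to be recorded; the paper proceeds by asserting the equality of $\tilde{\UU}^{\mu,q}(\D)$ with its countable intersection.

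Second, the step you yourself flag as delicate, item d), is not completely settled by ``sorting both sides by $\prec_{\alpha}$'': the sorting map for the order $\prec_{\alpha}$ (modulus first, then argument) is discontinuous at tuples in which two distinct entries share the same modulus. With your coupling of the approximating tuple $W_{m_s}$ and the precision $1/s$, the roots of $Q_{k_s,q}(f)$ do converge to the multiset $\{w^{(1)},\ldots,w^{(q)}\}$, but when such modulus ties occur in $W$ the $\prec_{\alpha}$-ordered root tuple may converge to a permuted limit, and the claim that $W_{m_s}$ is eventually $\prec_{\alpha}$-ordered in the same pattern also fails in general. The repair is to decouple the two parameters: choose rational tuples whose entries have pairwise distinct moduli, strictly increasing along the $\prec_{\alpha}$-order of $W$, and, for each such fixed tuple, take the precision small compared with its modulus gaps before passing to a closer tuple. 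This diagonal choice is available in your scheme (and is implicitly what the paper's sets $F(i,n,j,s,r,p)$ allow, since there the intersection ranges over all $s$ for each fixed rational tuple, with the $1/s$-closeness imposed on the ordered roots relative to the ordered rational tuple), so the gap is reparable, but as written the argument for d) does not go through in the tie case.
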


\begin{proof}
The main tool is Baire category theorem. Let \\
-- $(w^{(1)}_i,\ldots,w^{(q)}_{i})_i$ be an enumeration of all ordered tuple of $q$ complex numbers in $\C\setminus \D$ with rational coordinates,\\
-- $(L_r)_r$, an exhaustion of compacta in $\D$,\\
-- $\left(K_{n,i}\right)_n$, the sequence of compacta given by Lemma \ref{lemma-geo-Q} with $(w_{1},\ldots,w_{q})$ replaced by $(w_i^{(1)},\ldots,w_{i}^{(q)})$,\\
-- $\left(P_j\right)_j$ an enumeration of polynomials with coefficients in $\Q+i\Q$.\\
Applying Mergelyan's theorem in each $K_{n,i}$,  the set $\tilde{\UU}^{\mu,q}(\D)$ can be described as:
$$\tilde{\UU}^{\mu,q}(\D)=\bigcap_{i,n,j,s,r}\,\bigcup_{p\in \mu}F(i,n,j,s,r,p)$$
where, for $i,n,j,r \in \N$, $s\in \N^*$ and $p\in \mu$, $F(i,n,j,s,r,p)$ denotes the set of functions in $H(\D)$ such that
%\begin{multline*}
$$f\in \DD_{p,q},\quad\deg Q_{\lambda_{n},q}(f)=q,\quad\max_{1\leq l\leq q}|a_{p}^{(l)}-w_i^{(l)}|<s^{-1},$$
and
$$
\Vert [f;p/q]-P_j\Vert_{K_{n,i}}<s^{-1},\quad\Vert [f;p/q]-f\Vert_{L_r}<s^{-1}.
$$
%Let $\left((w^{(l)}_i)_{1\leq l\leq q}\right)_i$ be an enumeration of all families of $q$ complex numbers in $\C\setminus \D$ with rational coordinates. Let $(L_r)_r$ be an exhaustion of compacta in $\D$. For $i\in \N$, if $\left(K_{n,i}\right)_n$ is the sequence of compacta given by Lemma \ref{lemma-geo-Q} with $(w_l)_{1\leq l\leq q}$ replaced with $(w^{(l)}_i)_{1\leq l\leq q}$, then $\tilde{\UU}^{\mu,q}(\D)$ can be described as follows:
%$$\tilde{\UU}^{\mu,q}(\D)=\bigcap_{i,n,j,s,r}\,\bigcup_{p\in \mu}F(i,n,j,s,r,p)$$
%where, for $i,n,j,r \in \N$, $s\in \N^*$ and $p\in \mu$,
%\begin{multline*}F(i,n,j,s,r,p)=\big\{f\in H(\D);\,f\in \DD_{p,q},\max _{1\leq l\leq q}
%|a_{p,q}^{(l)}(f)-w_i^{(l)}|<s^{-1},\\\Vert [f;p/q]-P_j\Vert_{K_{n,i}}<s^{-1},\,\Vert [f;p/q]-f\Vert_{L_r}<s^{-1}\big\}.
%\end{multline*}
Recall that, by definition, $f\in\DD_{p,q}$ means that the Hankel determinant $C_{p,q}$ does not vanish. Also, from the determinantal expression for the denominator of a Pad\'e denominator, see (\ref{deter}), we have that $Q_{\lambda_{n},q}(f)$ is of degree $q$ if and only if $C_{p+1,q}$ is nonzero. Now, by Baire category theorem, it is sufficient to prove that each union $\bigcup_{p\in \mu}F(i,n,j,s,r,p)$ is open and dense in $H(\D)$. The Hankel determinants $C_{p,q}$, $C_{p+1,q}$, the Pad\'e approximant $[f;p/q]$ and the roots $a_{p}^{(l)}$, $1\leq l\leq q$, are continuous functions of the $p+q+1$ first coefficients of the Taylor expansion of $f$. Therefore each set $F(i,n,j,s,r,p)$ is open. Now Theorem \ref{main-thm-Q}, with $(w_1,\ldots ,w_q)$ replaced by $(w_i^{(1)},\ldots,w_i^{(q)})$, ensures that each $\cup_{p\in \mu}F(i,n,j,s,r,p)$ is dense in $H(\D)$.
\end{proof}

\begin{remark} Theorem \ref{2nd-thm-q} gives another occurence of density of poles. Indeed, the functions in $\tilde{\UU}^{\mu,q}(\D)$ have Pad\'e approximants of degree $(p_n,q)$, with $\left(p_n\right)_n=\mu$, whose set of poles is dense in $\C \setminus \D$, compare with the results of Section \ref{beh-poles}.
\end{remark}

\section{From universal series to Pad\'e universal series}\label{TtoP}

In this section we explore connections between the classical universal series and the Pad\'e universal series. 
%There is a vast literature about universal series, see \cite{bgnp,Luh2,Nes} and the references therein. For completeness, we recall the notion of a universal series.
We recall that the notion of universal series was defined in the introduction.
%\begin{definition}\label{defi-univ-Taylor-series} Let $\Omega \subset \C$ be a simply connected domain containing $0$ and $\mu$ an increasing sequence of positive integers. A universal series is a function $f$ in $H(\Omega)$ such that, for every compact set $K\subset\C \setminus \D$ 
%with connected complement, and every function $h\in A(K)$ there exists 
%a subsequence $(\lambda_n)_n$ of $\mu$ with the following properties:
%\begin{enumerate}[a)]
%\item The partial sums $S_{\lambda _{n}}\left(f\right)$ of the Taylor expansion of $f$ at $0$ converges to $h$ uniformly on $K$ as $n$ tends to $\infty$;
%\item $S_{\lambda _{n}}\left(f\right)$ converges to $f$ uniformly on every compact subset of $\Omega$ as $n$ tends to $\infty$.
%\end{enumerate}
%We will denote by $U^{\mu}(\Omega)$ the set of universal series.
%\end{definition}

%Notice the difference of notations between the set $\mathcal{U}^{\MS}(\D)$ of Pad\'e universal series (Theorem \ref{DFN-main-intro}) and the set $U^{\mu}(\D)$ of classical universal series. Also observe 
%Notice that item b) is trivial if $\Omega = \D$. 
For sake of simplicity, we restrict ourselves to the case of the unit disc, but the results displayed in this section would hold true for any simply connected domain $\Omega$ in $\C$. 

%We recall the following theorem about universal series, cf. \cite{Nes}.
%
%\begin{theorem}\label{thm-univ-Taylor-series}
%For any increasing sequence $\mu$ of positive integers, the set $U^{\mu}(\D)$ is a dense 
%$\text{G}_{\delta}$-subset of $H(\D)$.
%\end{theorem}

%When $\mu =\N$ we simply denote $U^{\mu}(\D)=U(\D)$.
It may be natural to ask whether a classical universal series provides a Pad\'e universal series, possibly in a systematic way. The answer to this question is affirmative and, even more, one can exhibit a \emph{dense} class of classical universal series which are systematically Pad\'e universal series. Here \emph{dense} refers to the topology of $H(\D)$. Subsequently, on can recover the genericity of Pad\'e universal series, stated in the introduction, as a consequence of the theory of classical universal series. In this connection, let us remark that, in the proof of Theorem \ref{main-thm},
%or \ref{main-thm-Q}, to build the Pad\'e universal series we first build a \emph{particular} power series which is almost a classical universal series in the sense that, for every \emph{convenient} compact $K\subset \C$, every function in $A(K)$ is the uniform limit on $K$ of a subsequence of the partial sums of its Taylor expansion at $0$. Indeed with the notations of the proof of Theorem \ref{main-thm}, 
the function $\tilde{f}$ defined in (\ref{def-tildef}), from which the Pad\'e universal series is built, is very close to be a universal series. This observation together with some additional tools from the theory of universal series actually provides us with a way to build up Pad\'e universal series.

%\medskip{}
We first recall the notion of Ostrowski-gaps.
\begin{definition}{\rm Let $\sum_{n\geq 0} a_nz^n$ be a power series. We say that it has Ostrowski-gaps $\left(p_m,q_m\right)_m$ if $(p_m)_m$ and $(q_m)_m$ are sequences of integers such that 
\begin{enumerate}[a)]
\item $p_0<q_0\leq p_1<q_1\leq\ldots\leq p_m<q_m\leq\ldots$ and $\displaystyle{\lim_{m\rightarrow \infty}q_m/p_m=\infty}$,
\item for $\displaystyle{I=\cup_{m\geq 0}\{p_m+1,\dots,q_m\}}$, we have $\displaystyle{\lim_{n\infty,\,n\in I}\vert a_n\vert^{1/n}=0}$.
\end{enumerate}}
\end{definition}
Gehlen, M\"uller and Luh proved that every universal series possesses Ostrowski-gaps \cite{GLM}.
%\begin{theorem}\label{thm-gaps}Let $f=\sum_{n\geq 0}a_nz^n$ in $U\left(\D \right)$. There exist two sequences $(p_m)_m$ and $(q_m)_m$ of integers such for any compact set $K\subset \D^c$ with connected complement and any $h\in A(K)$, there exists a sequence $\left(\lambda _n\right) _n$ of integers such that
%\begin{enumerate}\item $f$ has Ostrowski-gaps $\left(p_m,q_m\right)$ and
%\item $\left\Vert S_{p_{\lambda _n}}\left(f\right)(z)-h(z)\right\Vert _K \rightarrow 0}$, as $n$ goes to $\infty$.
%\end{enumerate}
%\end{theorem}
Recently the authors of \cite{CM} introduced the notion of \emph{large} Ostrowski-gaps where large refers to how fast $q_m/p_m$ tends to $\infty$. More precisely, let us call a \emph{weight}, a strictly increasing function 
$$\varphi:\R_+ \rightarrow \R_+\text{ such that } \varphi(x) \rightarrow \infty\text{ and }\varphi(x)/x \rightarrow 0\text{ as }x\to \infty.$$
A function $f$ in $H(\D)$ is said to have Ostrowski-gaps 
$(p_m,q_m)$ \emph{with respect to the weight $\varphi$} if its Taylor series has Ostrowski-gaps $(p_m,q_m)$ with $p_m<\varphi(q_m)<q_m$ for every $m\geq 0$.
Note that, the faster the ratio $\varphi(x)/x$ decreases to zero, the larger the Ostrowski-gaps are.

%The authors of \cite{CM} proved the existence of a dense $G_{\delta}$ subset of universal series with \emph{large} Ostrowski-gaps. Such universal series are defined as follows.

\begin{definition}\label{defi-large}Let $\varphi$ be a weight and let $\mu$ be an increasing sequence of positive integers. A power series $f$ in $H(\D)$ is a \emph{universal series having Ostrowski-gaps with respect to $\varphi$} if, for every compact set $K\subset\C \setminus \D$ with connected complement and every function $h$ in $A(K)$, there exist a subsequence $(p_m)$ of $\mu$ and a sequence $(q_m)$ such that
\begin{enumerate}[a)]
\item $S_{p_m}(f)\rightarrow h$ in $A(K)$ as $m\rightarrow \infty$,
\item $f$ has Ostrowski-gaps $(p_m,q_m)_m$ with respect to $\varphi$.
\end{enumerate}
\end{definition}
We denote by $U^{(\mu,\varphi)}(\D)$ the set of such power series. It has been proved in \cite{CM} that, for any weight $\varphi$, the set $U^{(\mu,\varphi)}(\D)$ is residual in $H(\D)$.

%\begin{remark}Notice that the sequence $\left(p_m\right)_n$ plays a particular.
%\end{remark}

\begin{proposition}[{\cite[Proposition 2.5]{CM}}]\label{prop-large}
Let $\varphi$ be a weight and let $\mu$ be an increasing sequence of positive integers. The set $U^{(\mu,\varphi)}(\D)$ is a dense $G_{\delta}$ subset of $H(\D)$.
\end{proposition}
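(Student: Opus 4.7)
The plan is to apply the Baire category theorem in $H(\D)$ by expressing $U^{(\mu,\varphi)}(\D)$ as a countable intersection of open dense subsets. I would first fix three enumerations: a family $(K_n)_n$ of compacta in $\C\setminus\D$ with connected complement, having the exhaustion property of Lemma~\ref{lemma-geo}; an enumeration $(P_j)_j$ of polynomials with coefficients in $\Q+i\Q$; and an exhaustion $(L_r)_r$ of $\D$ by closed concentric disks $\bar\D_{\rho_r}$, $\rho_r\uparrow 1$. For integers $s\geq 1$ and $j,n,r\geq 0$, let
\[
B(s,j,n,r):=\bigcup_{p\in\mu,\,q>p}\Bigl\{f\in H(\D):\ \|S_p(f)-P_j\|_{K_n}<\tfrac{1}{s},\ \varphi(q)>p,\ \max_{p<k\leq q}|a_k(f)|^{1/k}<\tfrac{1}{s}\Bigr\}.
\]

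The identity $U^{(\mu,\varphi)}(\D)=\bigcap_{s,j,n,r}B(s,j,n,r)$ follows by combining Mergelyan's theorem (to approximate $h\in A(K)$ by some $P_j$ and $K$ by some $K_n$) with a diagonal extraction along $s\to\infty$; the monotonicity $p_0<q_0\leq p_1<\ldots$ required by the Ostrowski-gap definition is arranged by thinning along $\mu$, and $\varphi(q_m)<q_m$ holds automatically for $q_m$ large since $\varphi(x)/x\to 0$. Each $B(s,j,n,r)$ is open, because by Cauchy's formula the coefficients $a_k(f)$ depend continuously on $f\in H(\D)$, so each inner set of the union is defined by finitely many open conditions.

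The bulk of the work is density. Given $f_0\in H(\D)$, a compact set $L\subset L_r\subset\D$ and $\varepsilon>0$, I would construct $f$ explicitly as a polynomial: choose $M$ so large that $\|S_M(f_0)-f_0\|_{L_r}<\varepsilon/2$; observe that $K_n\cup\bar\D_{\rho_r}$ has connected complement (both pieces have connected complements, are disjoint, and $K_n\subset\C\setminus\D$, $\bar\D_{\rho_r}\subset\D$), so Mergelyan's theorem applies to the continuous function equal to $z^{-(M+1)}(P_j-S_M(f_0))$ on $K_n$ and to $0$ on $\bar\D_{\rho_r}$. This yields a polynomial $T$ for which $R(z):=z^{M+1}T(z)$ satisfies $\|R+S_M(f_0)-P_j\|_{K_n}<1/s$ and $\|R\|_{L_r}<\varepsilon/2$. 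Setting $f:=S_M(f_0)+R$, a polynomial of some degree $d$, and choosing $p\in\mu$ with $p\geq d$ and $q>p$ with $\varphi(q)>p$ (possible since $\mu$ is unbounded and $\varphi\to\infty$), one has $S_p(f)=f$ and $a_k(f)=0$ for all $k>d$, so $f\in B(s,j,n,r)$ and $\|f-f_0\|_L<\varepsilon$.

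The main technical point is the Mergelyan step: producing a polynomial of valuation $\geq M+1$ that approximates $P_j-S_M(f_0)$ on $K_n$ and $0$ on $L_r$. The shift by $z^{-(M+1)}$ on $K_n$, legitimate because $0\notin K_n\subset\C\setminus\D$, combined with the choice of $L_r$ as a closed disk (to guarantee the connected-complement hypothesis of Mergelyan), are the key devices. Once $f$ is in hand the Ostrowski-gap condition is trivially satisfied, since $f$ has only finitely many nonzero Taylor coefficients; this is why the argument bypasses any delicate analysis of the weight $\varphi$.
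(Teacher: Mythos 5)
The paper offers no proof of this statement (it is imported verbatim from \cite[Proposition 2.5]{CM}), so your argument has to stand on its own. Its global scheme is reasonable: write $U^{(\mu,\varphi)}(\D)$ as a countable intersection of the open sets $B(s,j,n,r)$ and prove density of each by an explicit polynomial perturbation. The density step is correct: your $f$ is a polynomial, so taking $p\in\mu$ with $p\geq \deg f$ and then $q$ large with $\varphi(q)>p$ makes all three conditions in $B(s,j,n,r)$ hold trivially, and the Mergelyan step on $K_n\cup\bar\D_{\rho_r}$ (whose complement is indeed connected) with the $z^{-(M+1)}$ shift is fine. Two cosmetic points: the index $r$ never enters your $B$, which is harmless since Definition \ref{defi-large} does not require convergence to $f$ in $H(\D)$; and the covering family should be the one of \cite[Lemma 5]{Nes} rather than Lemma \ref{lemma-geo}, whose exhaustion property misses compacta passing through the excluded point $a$.

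The genuine gap is in the inclusion $\bigcap_{s,j,n,r}B(s,j,n,r)\subset U^{(\mu,\varphi)}(\D)$, exactly at the clause ``the monotonicity $p_0<q_0\leq p_1<\ldots$ is arranged by thinning along $\mu$''. Membership in $B(s,j,n,r)$ only asserts the existence of \emph{some} pair $(p,q)$, and nothing forces $p$ to be large: if a partial sum $S_{p^*}(f)$ happens to coincide with $P_j$ on $K_n$, the same $p^*$ (with a suitable $q$) witnesses every $s$ simultaneously, so having used a pair $(p_m,q_m)$ you have no guarantee of a later witness with $p\geq q_m$, and the interlaced sequence with $p_m\to\infty$, $q_m/p_m\to\infty$ and coefficient decay along the union of blocks cannot be extracted from the stated conditions alone. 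The claim is in fact true, but it needs an extra idea; two standard repairs are: (i) insert an additional parameter $N$ and require $p\geq N$ inside $B$, intersecting over $N$ as well --- your density proof is unaffected, since your polynomial $f$ admits witnesses with $p$ arbitrarily large in $\mu$; or (ii) keep $B$ as is, and when realizing a given $h\in A(K)$ approximate it by rational polynomials $R_i$ none of which is a partial sum of $f$ (possible, since among $R+c$ with $c\in\Q+i\Q$ small at most one can be a partial sum: two distinct partial sums never differ by a nonzero constant); then for each fixed $R_i$ the witnessing first components must be unbounded as $s\to\infty$, and the inductive interlaced extraction goes through, discarding finitely many initial terms to also secure $\varphi(q_m)<q_m$ and $q_m/p_m\geq q_m/\varphi(q_m)\to\infty$. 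With either repair your proof is complete; as written, the key extraction is asserted rather than proved.
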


Actually it can be checked that the proof of Proposition \ref{prop-large} easily implies the following one stating the density of a very particular class of universal series.

\begin{proposition}\label{prop-large-zero}Let $\varphi$ be a weight and let $\mu$ be an increasing sequence of positive integers. 
There exists a function $f=\sum_{k\geq 0}a_{k}z^{k}$ in $H(\D)$, having Ostrowski-gaps $\left(p_m,q_m\right)_m$ with respect to $\varphi$, such that $\left(p_m\right)_m$ is a subsequence of $\mu$, $a_{p_m}\neq 0$ for every $m$, and for every compact set $K\subset\C \setminus \D$ with connected complement and every function $h$ in $A(K)$, there is a sequence $\left(\lambda _n \right) _n$ of integers with the following properties:
\begin{enumerate}[a)]
\item $\left\Vert S_{p_{\lambda _n}}(f)-h\right\Vert _K\rightarrow 0,\hbox{ as }n\rightarrow \infty$,
\item $a_k=0$, for $p_m\leq k \leq q_m$, $m\geq 0$.
\end{enumerate}
The set $U_0^{(\mu,\varphi)}(\D)$ of such functions is a dense subset of $H(\D)$.
\end{proposition}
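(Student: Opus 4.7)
The plan is to adapt the inductive Mergelyan-type construction used to establish Proposition \ref{prop-large} in \cite{CM}, enforcing three additional constraints at each step: the approximation degree must lie in $\mu$, the corresponding leading coefficient must be nonzero, and the coefficients strictly between consecutive approximation blocks must vanish exactly rather than merely satisfy $|a_n|^{1/n}\to 0$. Since only density is claimed, it will suffice to build, for an arbitrary polynomial $T\in H(\D)$, a compact $L\subset\D$, and $\varepsilon_0>0$, a function $f\in U_0^{(\mu,\varphi)}(\D)$ with $\|f-T\|_L\leq\varepsilon_0$.

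First I would set up the standard data: an exhaustion $(L_r)_r$ of $\D$ containing $L$, a sequence $(K_n)_n$ exhausting the compacta of $\C\setminus\overline{\D}$ with connected complement (a version of Lemma \ref{lemma-geo} with no distinguished point, which is classical), an enumeration $(Q_j)_j$ of polynomials with coefficients in $\Q+i\Q$, and surjections $\phi,\psi:\N\to\N$ each pair of whose values is attained infinitely often. I would initialize $f_0=T+\eta_0 z^{p_0}$ with $p_0\in\mu$ greater than $\deg T$ and $\eta_0\neq 0$ small enough to have $\|f_0-T\|_L<\varepsilon_0/2$, so that $\deg f_0=p_0\in\mu$ and the leading coefficient is nonzero.

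For the inductive step, suppose $f_j$ is a polynomial with $\deg f_j=p_j\in\mu$ and nonzero leading coefficient. I would choose $q_j\in\N$ so that $p_j<\varphi(q_j)<q_j$ and $q_j/p_j>j+1$, which is possible because $\varphi$ is a weight. Then, applying a variant of Lemma \ref{mergelyan} without the distinguished point, I would produce a polynomial $R$ of valuation $\geq q_j+1$ satisfying
\begin{equation*}
\|R-(Q_{\phi(j+1)}-f_j)\|_{K_{\psi(j+1)}}\leq\varepsilon_0 2^{-(j+2)},\qquad \|R\|_{L_{j+1}}\leq\varepsilon_0 2^{-(j+2)}.
\end{equation*}
Next I would pick $p_{j+1}\in\mu$ with $p_{j+1}>\max(\deg R,q_j)$ and set $f_{j+1}=f_j+R+\eta_{j+1}z^{p_{j+1}}$, where $\eta_{j+1}\neq 0$ is chosen small enough that $|\eta_{j+1}|\sup_{z\in K_{\psi(j+1)}\cup L_{j+1}}|z|^{p_{j+1}}\leq\varepsilon_0 2^{-(j+2)}$. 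The essential point is that all coefficients of $f_{j+1}$ at degrees $p_j+1,\ldots,q_j$ remain zero, since $f_j$ has degree $p_j$ and both $R$ and the added monomial contribute only at degrees $\geq q_j+1$.

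Convergence in $H(\D)$ will follow from the telescoping $L_{j+1}$-estimates, producing a limit $f$ with $\|f-T\|_L\leq\varepsilon_0$. By construction, $f$ will have Ostrowski-gaps $(p_m,q_m)_m$ with respect to $\varphi$, with $(p_m)_m$ a subsequence of $\mu$ and $a_{p_m}\neq 0$. Universality of the partial sums $S_{p_m}(f)$ is handled as usual: for given $K\subset\C\setminus\overline{\D}$ with connected complement and $h\in A(K)$, one fixes $r$ with $K\subset K_r$ and $Q_l$ approximating $h$ on $K_r$, then selects an infinite sequence $(v_j)$ with $(\phi(v_j),\psi(v_j))=(l,r)$; along this sequence, $S_{p_{v_j}}(f)$ coincides with $f_{v_j}$ because the Taylor coefficients of $f-f_{v_j}$ vanish up through degree $q_{v_j}$, so the $K_{\psi(v_j)}$-estimate yields the desired approximation. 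The main technical point is reconciling the three simultaneous requirements on each block---prescribed large valuation, end-degree in $\mu$, and nonzero coefficient at that end-degree; the added monomial $\eta_{j+1}z^{p_{j+1}}$ resolves this, at the price of taking $\eta_{j+1}$ exponentially small to absorb the growth of $|z|^{p_{j+1}}$ on the unbounded $K_{\psi(j+1)}$.
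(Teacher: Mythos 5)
Your construction is correct, and it is worth pointing out that the paper itself offers no proof of this proposition: it only asserts that the proof of Proposition \ref{prop-large}, i.e.\ of \cite[Proposition 2.5]{CM}, can be adapted. What you propose is a self-contained alternative, namely the inductive Mergelyan scheme of the paper's proof of Theorem \ref{main-thm} without the division by $1-z$: each correction is given valuation beyond a $q_j$ chosen with $p_j<\varphi(q_j)<q_j$ and $q_j/p_j\to\infty$, and the added monomial $\eta_{j+1}z^{p_{j+1}}$ forces the end-degree of each block to lie in $\mu$ with a nonzero top coefficient, which survives all later steps since subsequent corrections have valuation at least $q_{j+1}+1>p_{j+1}$. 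This buys independence from the preprint \cite{CM}, and it even produces coefficients vanishing identically in the gaps, which is stronger than the decay $|a_n|^{1/n}\to 0$ required in the definition of Ostrowski-gaps; the price is simply re-running the standard universal-series induction, whose convergence, gap and universality checks you carry out correctly (in particular $S_{p_{v_j}}(f)=f_{v_j}$ because $f-f_{v_j}$ has valuation $>q_{v_j}\geq p_{v_j}=\deg f_{v_j}$).

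A few details should be adjusted in the write-up, none of them structural. The compacta in the statement lie in $\C\setminus\D$, not $\C\setminus\overline{\D}$: take for $(K_n)_n$ the classical family used in Lemma \ref{lemma-geo} with the distinguished point omitted (cf.\ \cite[Lemma 2.1]{Nes}), which consists of compacta of $\{|z|\geq 1\}$ and contains every compact subset of $\C\setminus\D$ with connected complement; your induction and the variant of Lemma \ref{mergelyan} go through verbatim for such $K$ (approximate $z^{-p}h$ on $K$ and $0$ on a closed disc $\{|z|\leq r\}\supset L$ with $r<1$, whose union with $K$ has connected complement). Also, $h$ is defined only on $K$, so choose $Q_l$ with $\Vert Q_l-h\Vert_K\leq\varepsilon/2$ by Mergelyan on $K$ and bound $\Vert f_{v_j}-Q_l\Vert_K$ by $\Vert f_{v_j}-Q_l\Vert_{K_r}$; to get a single sequence $(\lambda_n)$ with $\Vert S_{p_{\lambda_n}}(f)-h\Vert_K\to 0$, as the statement demands, finish with the usual diagonal extraction over rational polynomials converging to $h$ on $K$. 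Finally, your reading of condition b), i.e.\ $a_k=0$ only for $p_m+1\leq k\leq q_m$, is the intended one, being the only reading compatible with $a_{p_m}\neq 0$ and with the definition of Ostrowski-gaps.
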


\begin{remark}Notice that, in the above proposition, the gaps $\left(p_m,q_m\right)_m$ do not depend on the compact set $K$ nor on the function $h\in A(K)$, compare with Definition \ref{defi-large}.
\end{remark}

We first prove that any function of $U_0^{(\mu,\varphi)}(\D)$ provides a Pad\'e universal series whose Pad\'e approximants have prescribed poles.

\begin{proposition}\label{prop-csq-q}Let $\varphi$ be a weight and $\mu$ an increasing sequence of positive integers. Let $g=\sum _{k\geq 0}b_kz^k$ be in $ U_0^{(\mu,\varphi)}(\D)$. We denote by $Z_g$ the union of all zeros of the partial sums $S_{p_m}(g)$, $m\geq 0$, where $\left(p_m,q_m\right)_m$ are the Ostrowski-gaps of $g$.
% given by Proposition \ref{prop-large-zero}. 
Let now $Q$ be any polynomial of degree $q \geq 1$, with zeros $Z(Q)$ outside $\D\cup Z_g$ and such that $Q(0)=1$. Then the function $f:=g/Q$ belongs to the set $\UU_{Q}^{\mu}(\D)$, as defined in Theorem \ref{main-thm-Q}.
%$ \in H(\D)$ satisfies the following property: for every compact set $K\subset\C \setminus (\D \cup Z(Q))$ with connected complement and every function $h$ in $A(K)$ there exists a sequence $\left(\lambda _n \right) _n$ of integers such that
%\begin{enumerate}[a)]
%\item For every $n\geq 0$, $f\in \DD {p_{\lambda _n},q}$;
%\item $[f;p_{\lambda _n}/q]\rightarrow h$ in $A(K)$ as $n\rightarrow \infty,$
%\item $[f;p_{\lambda _n}/q]\rightarrow f$ in $H(\D)$ as $n\rightarrow \infty$.
%\end{enumerate}
\end{proposition}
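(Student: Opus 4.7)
The plan is to build the Pad\'e approximants of $f=g/Q$ directly out of the partial sums of $g$. Specifically, I will show that for all sufficiently large $m$, the rational function $S_{p_m}(g)/Q$ is exactly the Pad\'e approximant $[f;p_m/q]$ in irreducible form, with denominator equal to $Q$. Once this identification is established, the universal behavior of $f$ along the sequence of Pad\'e approximants will follow from the universal behavior of $g$ along the sequence of partial sums $S_{p_m}(g)$.

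First, I note that $f=g/Q\in H(\D)$ since $Q$ has no zeros in $\D$. To identify $[f;p_m/q]$, I would check the four defining properties for $S_{p_m}(g)/Q$ to be a type-$(p_m,q)$ Pad\'e approximant: (i) the numerator has degree exactly $p_m$ because $b_{p_m}\neq 0$; (ii) the denominator has degree exactly $q$ by assumption on $Q$; (iii) numerator and denominator are coprime, because the zeros of $Q$ lie outside $Z_g$, hence cannot coincide with any zero of $S_{p_m}(g)$; (iv) $Q(0)=1$. The crucial error estimate is
\begin{equation*}
f(z)-\frac{S_{p_m}(g)(z)}{Q(z)}=\frac{g(z)-S_{p_m}(g)(z)}{Q(z)},
\end{equation*}
and since $b_k=0$ for $p_m<k\leq q_m$ by the Ostrowski-gap condition, the numerator on the right has valuation at least $q_m+1$. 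Because $Q(0)=1$, the quotient has the same valuation, so the left-hand side is $\OO(z^{q_m+1})$. Since $q_m/p_m\to\infty$, for $m$ large enough one has $q_m+1\geq p_m+q+1$, giving exactly the Pad\'e order condition. By the uniqueness statement and Proposition \ref{equiv-pade}, this forces $[f;p_m/q]=S_{p_m}(g)/Q$, the equality $Q_{p_m,q}(f)=Q$, and $f\in\NN_{p_m,q}$ for every sufficiently large $m$.

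Now let $K\subset\C\setminus(\D\cup W)$ be a compact subset with connected complement and let $h\in A(K)$. Since $Q$ is a polynomial and thus belongs to $A(K)$, the product $Q\cdot h$ lies in $A(K)$ as well. Applying the defining property of $g\in U_0^{(\mu,\varphi)}(\D)$ to the compact set $K\subset\C\setminus\D$ and the function $Qh\in A(K)$, I obtain a sequence $(\lambda_n)_n$ of integers such that $S_{p_{\lambda_n}}(g)\to Qh$ in $A(K)$. Because $K\cap W=\emptyset$, the polynomial $Q$ is bounded away from zero on $K$, so dividing yields $S_{p_{\lambda_n}}(g)/Q\to h$ in $A(K)$. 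On the other hand, the partial sums of a function in $H(\D)$ converge to it locally uniformly on $\D$, and $Q$ is bounded below on each compact subset of $\D$ (since its zeros lie outside $\D$); hence $S_{p_{\lambda_n}}(g)/Q\to g/Q=f$ in $H(\D)$. Discarding the finitely many indices for which the Pad\'e identification $[f;p_{\lambda_n}/q]=S_{p_{\lambda_n}}(g)/Q$ fails, we obtain a subsequence of $\mu$ along which conclusions a), b), c), d) of Theorem \ref{main-thm-Q} all hold, so $f\in\UU_Q^{\mu}(\D)$.

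The only subtle point in this argument is the identification of $S_{p_m}(g)/Q$ as the irreducible form of the Pad\'e approximant, which relies on the coprimality hypothesis $Z(Q)\cap Z_g=\emptyset$; without it, cancellation could lower the denominator degree and destroy the desired equality $Q_{p_m,q}(f)=Q$. Everything else is a direct transfer of the universal approximation of $g$ to $f$ via division by the fixed polynomial~$Q$.
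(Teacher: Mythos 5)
Your proposal is correct and follows essentially the same route as the paper: identify $[f;p_m/q]=S_{p_m}(g)/Q$ (with denominator exactly $Q$) using the Ostrowski-gap to get the order condition $\OO(z^{q_m+1})$ with $q_m-p_m>q$ eventually, coprimality from $Z(Q)\cap Z_g=\emptyset$, and then transfer the universal approximation of $g$ to $f$ by dividing by $Q$, which is bounded away from zero on $K$ and on compact subsets of $\D$. You merely supply more detail than the paper on the Pad\'e identification and justify the $H(\D)$-convergence via the elementary fact that partial sums converge locally uniformly in $\D$, which is a harmless variation.
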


\begin{proof}Let $K$ be any compact set in $\C \setminus (\D\cup Z(Q))$ and let $h \in A(K)$. %$g$ belongs to $U_0^{(\mu,\varphi)}(\D)$ so t
There exists a sequence $\left(\lambda _n\right)_n$ of integers such that
\begin{enumerate}
\item $S_{p_{\lambda _n}}(g)\to Qh$ in $A(K)$, as $n$ tends to $\infty$;
\item $S_{p_{\lambda _n}}(g)\to g$ in $H(\D)$, as $n$ tends to $\infty$;
\item For every $k\in \left\{p_m,\ldots,q_m\right\}$, $m\geq 0$, we have $b_k=0$.
\end{enumerate}
Now, by the properties of Ostrowski-gaps, up to a subsequence, we can assume that $q_{\lambda _n}-p_{\lambda _n}> q$ for any $n\geq 0$. Hence, $f\in\DD_{p_{\lambda_{n}},q}$ and $[f;p_{\lambda _n}/q]=S_{p_{\lambda _n}}(g)/Q$, where we use the fact that $Q$ and $S_{p_m}(g)$ are coprime. Dividing by $Q$ in items 1. and 2. finishes the proof.
\end{proof}

\begin{remark}Observe that the previous proof works for any weight $\varphi$.
\end{remark}

%We immediately deduce the following particular case of Theorem \ref{DFN-main}.
%\begin{corollary}\label{coro-univ-pade-q}Let $\MS:=\left(p_n,q\right)_n$ be a sequence of elements in $\N \times \N$ such that $\left(p_n\right)_n$ is a strictly increasing sequence and $q\geq 1$ an integer. The set $\mathcal{U}^{\MS}(\D)$ is a dense $G_{\delta}$ subset of $H(\D)$.
%\end{corollary}

%\begin{proof}This is the standard proof by Baire Category Theorem. It suffices to observe that given $g$ in the dense set $\mathcal{U}_0^{(\mu,\varphi)}(\D)$ with $\mu =\left(p_n\right)_n$ and $\varphi$ any weight, $g$ approaching a polynomial $P$, there exists a polynomial $Q$ of degree $q$ as much as we want close to $1$ uniformly on a neighbourhood of $\overlin{\D}$, with zeros outside $Z_g$. Thus $f:=g/Q$ is universal in the sense of Proposition \ref{prop-csq-q} and approaches $P/Q$ in $H(\D)$ which is close to $P$.
%\end{proof}

%\medskip{}

Next, we recall \cite[Theorem 3.1]{DFN} in the case of the unit disk.

\begin{theorem}[Theorem 3.1 of \cite{DFN}]\label{thm-DFN}Let $\mathcal{S}=\left(p_m,q_m\right)_m$ be a sequence of pairs of positive integers such that $\left(p_n\right)_n$ is unbounded. There exists a function $f$ in $H(\D)$ such that for every compact set $K\subset \C\setminus \D$ with $\C\setminus K$ connected and every function $h\in A(K)$, there is a subsequence $\left(\lambda_n\right)_n$ of $\N$ with the following properties:
\begin{enumerate}[a)]
\item $f\in \DD _{p_{\lambda _n},q_{\lambda _n}}$ for all $n=1,2,\ldots$;
\item $[f;p_{\lambda _n}/q_{\lambda _n}]\rightarrow h$ in $A(K)$ as $n\rightarrow \infty$;
\item $[f;p_{\lambda _n}/q_{\lambda _n}]\rightarrow f$ in $H(\D)$ as $n\rightarrow \infty$;
\end{enumerate}
The set $\UU ^{\mathcal{S}}(\D)$ of those functions is a dense $G_{\delta}$ subset of $H(\D)$.
\end{theorem}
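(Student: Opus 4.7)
My plan is to recast $\UU^{\mathcal{S}}(\D)$ as a $G_{\delta}$ set and apply Baire's theorem, where density will be furnished by Proposition \ref{prop-large-zero}: a classical universal series in $U_0^{(\mu,\varphi)}(\D)$, with the weight $\varphi$ suitably adapted to $\mathcal{S}$, will itself be a Pad\'e universal series with respect to $\mathcal{S}$, with no polynomial division needed at all. The point is that, whenever $g$ has an Ostrowski-gap of length at least $q_n$ starting at an index $p_n$ belonging to $\mu$, its Pad\'e approximant $[g;p_n/q_n]$ collapses to $S_{p_n}(g)$, and the classical universality of $g$ then supplies the required approximation on any compact $K\subset\C\setminus\D$ with connected complement.

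Following the standard recipe, I would fix a compact exhaustion $(L_r)_r$ of $\D$, a sequence $(K_i)_i$ of compacta in $\C\setminus\D$ with connected complement such that every such compact is contained in some $K_i$ (as provided by a variant of Lemma \ref{lemma-geo} without the forbidden point), and an enumeration $(P_j)_j$ of the polynomials with coefficients in $\Q+i\Q$, which is dense in each $A(K_i)$ by Mergelyan's theorem. One can then rewrite
\[
\UU^{\mathcal{S}}(\D)=\bigcap_{i,j,r,s}\bigcup_{n\geq 1}F(i,j,r,s,n),
\]
where $F(i,j,r,s,n)$ is the set of $f\in H(\D)$ with $f\in\DD_{p_n,q_n}$, $\|[f;p_n/q_n]-P_j\|_{K_i}<1/s$ and $\|[f;p_n/q_n]-f\|_{L_r}<1/s$. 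Each of these sets is open in $H(\D)$: indeed, the Hankel determinant $C_{p_n,q_n}$ and the coefficients of the Pad\'e approximant are polynomial functions of the first $p_n+q_n+1$ Taylor coefficients of $f$, which depend continuously on $f$ by Cauchy's estimates. By Baire's theorem it therefore suffices to show that each union $\bigcup_n F(i,j,r,s,n)$ is dense in $H(\D)$.

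For this, given $T\in H(\D)$, a compact $L\subset\D$, and $\varepsilon>0$, after extracting a strictly increasing subsequence of $(p_n)$ if needed, I would choose a weight $\varphi$ tailored to $\mathcal{S}$ by setting, for instance, $\varphi^{-1}(x):=\max\bigl(x^{2},\,x+\max\{q_N:\,p_N\leq x\}\bigr)$, which is well-defined, super-linear, and whose inverse $\varphi$ still satisfies $\varphi(x)/x\to 0$. By construction, every $g\in U_0^{(\mu,\varphi)}(\D)$ with $\mu=(p_n)_n$ has Ostrowski-gaps $(\tilde p_m,\tilde q_m)_m$ with $\tilde p_m\in\mu$ and $\tilde q_m-\tilde p_m\geq q_{N_m}$ whenever $\tilde p_m=p_{N_m}$. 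By Proposition \ref{prop-large-zero}, $U_0^{(\mu,\varphi)}(\D)$ is dense in $H(\D)$, so I would pick such a $g$ with $\|g-T\|_L<\varepsilon$. Applying the universality of $g$ on the pair $(K_i,P_j)$ yields a sequence $(\nu_k)_k$ of integers such that $S_{\tilde p_{\nu_k}}(g)\to P_j$ in $A(K_i)$; writing $\tilde p_{\nu_k}=p_{\lambda_k}$, the vanishing of the Taylor coefficients of $g$ between indices $p_{\lambda_k}+1$ and $\tilde q_{\nu_k}\geq p_{\lambda_k}+q_{\lambda_k}$ forces $[g;p_{\lambda_k}/q_{\lambda_k}]=S_{p_{\lambda_k}}(g)$, a numerator of exact degree $p_{\lambda_k}$ (as $a_{p_{\lambda_k}}\neq 0$) over the denominator $1$, so $g\in\DD_{p_{\lambda_k},q_{\lambda_k}}$ by Proposition \ref{equiv-pade}. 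Combined with $S_{p_{\lambda_k}}(g)\to g$ locally uniformly in $\D$, this yields $g\in F(i,j,r,s,\lambda_k)$ for all $k$ large enough. The principal obstacle I foresee is the calibration of $\varphi$: one has to enforce simultaneously that $\varphi(x)/x\to 0$ (so that $\varphi$ is genuinely a weight and Proposition \ref{prop-large-zero} applies) and that $\varphi^{-1}$ dominates $x\mapsto x+\max\{q_N:p_N\leq x\}$, which is always achievable by allowing $\varphi^{-1}$ to grow at least quadratically, independently of how fast $(q_n)$ may grow.
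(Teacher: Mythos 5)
Your proposal is correct and follows essentially the same route as the paper: the same $G_{\delta}$ decomposition into the open sets $F(\cdot)$, with density supplied by the class $U_0^{(\mu,\varphi)}(\D)$ of universal series whose Ostrowski-gaps, thanks to a weight $\varphi$ calibrated to $\mathcal{S}$, force $[g;p_n/q_n]=S_{p_n}(g)$ (this is exactly the content of Propositions \ref{prop-large-zero} and \ref{last-prop-end}). The only cosmetic difference is that you conclude via Baire's theorem on the dense open unions, while the paper simply observes that the $G_{\delta}$ set already contains the dense set $U_0^{(\mu,\varphi)}(\D)$, so Baire is not needed.
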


The following proposition tells us that each element of certain classes of universal series with large Ostrowski-gaps is a Pad\'e universal series in the sense of \cite[Theorem 3.1]{DFN}.

\begin{proposition}\label{last-prop-end}Let $\mathcal{S}=\left(p_m,q_m\right)_m$ be a sequence of pairs of positive integers such that $\left(p_n\right)_n$ is unbounded. There exist a weight $\varphi$ and an increasing sequence $\mu$ of positive integers such that $U_0^{(\mu,\varphi)}(\D)\subset \UU ^{\mathcal{S}}(\D)$.
\end{proposition}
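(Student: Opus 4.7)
The strategy is to exploit the following simple observation: if $f=\sum_{k\geq 0} a_k z^k$ has a gap $a_k=0$ for $p_m<k\leq q_m$ together with $a_{p_m}\neq 0$, then the polynomial $S_{p_m}(f)$, written as $S_{p_m}(f)/1$, satisfies $f-S_{p_m}(f)=\OO(z^{q_m+1})$, is in irreducible form, and has denominator value $1$ at the origin. By uniqueness of the Pad\'e approximant, this forces $S_{p_m}(f)=[f;P/Q]$ whenever $P=p_m$ and $P+Q\leq q_m$, and Proposition \ref{equiv-pade} then gives $f\in\DD_{P,Q}$ because $\deg S_{p_m}(f)=p_m=P$. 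Thus every partial sum $S_{p_m}(f)$ followed by a sufficiently long string of zero Taylor coefficients automatically realizes a whole row of Pad\'e approximants of $f$.

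Given $\MS=(P_n,Q_n)_n$ with $(P_n)_n$ unbounded, the plan is to choose indices $(n_k)_k$ inductively so that $(P_{n_k})_k$ is strictly increasing with very fast growth, say $P_{n_k}\geq 2^k$ and $P_{n_{k+1}}>P_{n_k}+Q_{n_k}$. This is possible because, as $(P_n)_n$ is unbounded, $\{n:P_n\geq M\}$ is infinite for every $M$. I would then set $\mu_k:=P_{n_k}$ and $x_k:=\mu_k+Q_{n_k}$: the first condition makes $\mu=(\mu_k)_k$ a valid (strictly increasing) sequence of positive integers, and the second ensures that $(x_k)_k$ is strictly increasing.

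For the weight I would take $\varphi$ to be the continuous, strictly increasing function with $\varphi(0)=0$, $\varphi(x_k)=k+1$ for $k\geq 0$, and linear interpolation between consecutive $x_k$'s. The lower bound $\mu_k\geq 2^k$ yields $\varphi(x_k)=k+1\leq \mu_k$ for every $k\geq 0$, and on each interval $[x_k,x_{k+1}]$ one has $\varphi(x)/x\leq (k+2)/x_k\leq (k+2)/2^k\to 0$, so $\varphi$ is a weight in the sense of the paper. The crucial consequence is the following: if $f\in U_0^{(\mu,\varphi)}(\D)$ has Ostrowski-gaps $(p_m,q_m)_m$, each $p_m$ lies in $\mu$, so $p_m=\mu_{k(m)}$ for some $k(m)$, and the Ostrowski condition $p_m<\varphi(q_m)$ combined with $\varphi(x_{k(m)})=k(m)+1\leq\mu_{k(m)}=p_m$ forces $q_m\geq x_{k(m)}$, i.e., $q_m-p_m\geq Q_{n_{k(m)}}$.

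Putting these together, one gets $[f;P_{n_{k(m)}}/Q_{n_{k(m)}}]=S_{p_m}(f)$ and $f\in\DD_{P_{n_{k(m)}},Q_{n_{k(m)}}}$ for every $m$. To conclude $f\in\UU^{\MS}(\D)$, fix a compact set $K\subset\C\setminus\D$ with connected complement and an $h\in A(K)$: by definition of $U_0^{(\mu,\varphi)}(\D)$, there exists a subsequence $(\lambda_n)_n$ with $\|S_{p_{\lambda_n}}(f)-h\|_K\to 0$, while $S_{p_m}(f)\to f$ in $H(\D)$ simply because $f$ is the sum of its Taylor series in $\D$. The sequence $(n_{k(\lambda_n)})_n$ of indices in $\N$ then witnesses the three required convergences for $\UU^{\MS}(\D)$. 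The main technical obstacle is the joint calibration of $\mu$ and $\varphi$: the constraint $\varphi(x_k)\leq\mu_k$ coexists with $\varphi(x)/x\to 0$ only when $\mu_k$ is sparse enough inside the image of $(P_n)_n$ to absorb the possible growth of $Q_{n_k}$, and this is precisely what the super-polynomial lower bound $\mu_k\geq 2^k$ secures.
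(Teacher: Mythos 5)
Your proposal is correct and follows essentially the same route as the paper: calibrate the pair $(\mu,\varphi)$ so that any Ostrowski-gap $(p_m,q_m)$ of a function in $U_0^{(\mu,\varphi)}(\D)$ with $p_m\in\mu$ satisfies $q_m\geq p_m+Q$ for a pair of $\MS$ with first component $p_m$, and then use the vanishing coefficients (plus $a_{p_m}\neq 0$ and Proposition \ref{equiv-pade}) to identify $S_{p_m}(f)$ with the Pad\'e approximant and get membership in $\DD$. The only difference is cosmetic: the paper keeps $\mu=(p_m)_m$ and imposes the implicit smallness condition $\varphi(p_r+q_r)<n$, whereas you extract a sparse subsequence of the numerator degrees and build $\varphi$ explicitly by interpolation, which amounts to the same calibration.
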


\begin{proof}Let $\mu =\left(p_m\right)_m$ and $\varphi$ be such that, for any positive integer $n$, 
$$\varphi(p_r+q_r)<n,\quad\text{where}\quad p_r=\min\{p_m;\,p_m\geq n\}.$$
Let $g=\sum _k a_kz^k \in U_0^{(\mu,\varphi)}(\D)$. By the choice of $\mu$ and $\varphi$, it follows that there exists a sequence $\left(\lambda _n\right)_n \subset \N$ such that
\begin{equation}\label{gap-gd}
\tilde{p_n}=p_{\lambda _n}\text{ and }\tilde{q}_n \geq p_{\lambda_n}+q_{\lambda _n}.
\end{equation}
To prove that $g\in \UU ^{\mathcal{S}}(\D)$, we fix a compact set $K\subset \C \setminus \D$ with $\C \setminus K$ connected and $h\in A(K)$. Since $g\in U_0^{(\mu,\varphi)}(\D)$ we have, up to a subsequence,
\begin{equation}\label{eq2}
\|S_{\tilde{p}_n}(g)-\tilde{h}\|_K\rightarrow 0\mbox{ and } 
\left\Vert S_{\tilde{p}_n}(g)-g\right\Vert _L\rightarrow 0\mbox{ as }n\rightarrow \infty.
\end{equation}
By (\ref{gap-gd}) $S_{\tilde{p}_n}(g)=S_{p_{\lambda _n}}(g)$ and $a_k=0$ for any $k\in [p_{\lambda _n},p_{\lambda _n}+q_{\lambda _n}]$, so $S_{p_{\lambda _n}}(g)=[g;p_{\lambda _n}/q_{\lambda _n}]$, hence the conclusion by (\ref{eq2}).
\end{proof}

We are in a position to recover \cite[Theorem 3.1]{DFN} for the case of a disk, that is that $\UU^{\MS}(\D)$ is a dense $G_{\delta}$ subset of $H(\D)$, without using Baire Category Theorem:

Let $(L_r)_r$ be an exhaustion of compacta in $\D$, $\left(K_{m}\right)_m$ a sequence of compacta in $\C \setminus \D$ given by \cite[Lemma 5]{Nes} and $\left(P_j\right)_j$ an enumeration of polynomials with coefficients in $\Q+i\Q$. Then
$$\UU ^{\MS}(\D)=\bigcap_{j,s,r,m}\,\bigcup_{n\in \N}F(n,j,s,r,m)$$
where $F(n,j,s,r,m)$ is the set of function $f$ in $H(\D)$ such that
$$f\in \DD_{p_n,q_n},\quad\Vert [f;p_n/q_n]-P_j\Vert_{K_{m}}<s^{-1},
\quad\Vert [f;p_n/q_n]-f\Vert_{L_r}<s^{-1}.
$$
We know that each set $F(n,j,s,r,m)$ is open so $\UU ^{\MS}(\D)$ is a $G_{\delta}$ subset of $H(\D)$. By Proposition \ref{last-prop-end} it contains some $U_0^{(\mu,\varphi)}(\D)$ which is dense by Proposition \ref{prop-large-zero}, hence the conclusion.

We end up with a statement which is slightly stronger than \cite[Theorem 3.1]{DFN} (for the unit disk).

\begin{corollary}With the notations and assumptions of Theorem \ref{thm-DFN}, the subset of $\UU ^{\mathcal{S}}(\D)$ each element of which belongs to $\NN _{m,n}$ for any $(m,n)\in \N \times \N$, is a dense $G_{\delta}$ subset of $H(\D)$.
\end{corollary}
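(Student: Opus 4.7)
The plan is straightforward: combine the dense $G_{\delta}$ statement for $\UU^{\MS}(\D)$ that was just derived (in the paragraph recovering \cite[Theorem 3.1]{DFN}) with the genericity of the normality sets $\NN_{m,n}$ from Proposition \ref{D-N}, and apply Baire's theorem. Since $H(\D)$ is a Fr\'echet space, it is a Baire space, so countable intersections of dense $G_{\delta}$ subsets remain dense $G_{\delta}$.

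First, I would write the set to be studied as
$$
\UU^{\MS}(\D) \cap \bigcap_{(m,n)\in\N\times\N} \NN_{m,n}.
$$
By Proposition \ref{D-N}, each $\NN_{m,n}$ is an open dense subset of $H(\D)$. Since $\N\times\N$ is countable, $\bigcap_{(m,n)}\NN_{m,n}$ is a dense $G_{\delta}$ subset of $H(\D)$ by Baire's theorem. From the discussion preceding this corollary, $\UU^{\MS}(\D)$ is itself a dense $G_{\delta}$ subset of $H(\D)$ (this was obtained as a consequence of Proposition \ref{last-prop-end} together with Proposition \ref{prop-large-zero}, or alternatively from Theorem \ref{thm-DFN} directly).

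Finally, I would invoke Baire's theorem once more: the intersection of two dense $G_{\delta}$ subsets of a Baire space is again dense $G_{\delta}$. This gives the conclusion.

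There is no real obstacle here since every ingredient has been established earlier in the paper; the only thing to be careful about is that one needs $\NN_{m,n}$, and not merely $\DD_{m,n}$, to be dense in $H(\D)$, which is precisely what Proposition \ref{D-N} provides. In particular, the corollary tells us that generically the Pad\'e approximants of any order of a Pad\'e universal series do exist and are of \emph{exact} degrees $(m,n)$, which is indeed a strengthening of \cite[Theorem 3.1]{DFN}.
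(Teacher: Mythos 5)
Your proof is correct and is essentially the paper's own argument: the paper also writes the set as the intersection of $\UU^{\MS}(\D)$ (dense $G_{\delta}$ by Theorem \ref{thm-DFN}) with the countably many open dense sets $\NN_{m,n}$ and concludes by the Baire Category Theorem. No gaps; your extra remark that one needs the density of $\NN_{m,n}$ rather than just $\DD_{m,n}$ is exactly the point of the ``slightly stronger'' statement.
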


\begin{proof}This is nothing but Baire Category Theorem together with \cite[Theorem 3.1]{DFN}, since $\NN_{m,n}$ is an open dense subset of $H(\D)$.
\end{proof}

%\begin{remark} The above results could also be stated with formal power series instead of functions in $H(\D)$.\\
%(2) Results of Section 3 and 4 are of different kinds. In the present section we fix a universal series as well as the denominators of the Pad\'e approximants and then we show that they actually have universal properties (very close to usual Pad\'e universality). In Section 3 we impose the \emph{general form} of the Pad\'e universal series and the universal properties to get, and afterward we construct a universal series and the Pad\'e approximants which will have these universal properties.
%\end{remark}


\begin{thebibliography}{99}

\bibitem{Aron1} \textsc{R. Aron, D. Garc\'ia, M. Maestre}, {\it Linearity in non-linear problems}, Rev. R. Acad. Cien. Ser. A Mat. \textbf{95} (2001) 7--12.

\bibitem{Aron2} \textsc{R. Aron, V. I. Gurariy, J. B. Seoane-Sep\'ulveda}, {\it Lineability and spaceability of sets of functions on $\R$}, Proc. Amer. Math. Soc. \textbf{133} (2005) 795--803.

\bibitem{BGM} \textsc{G. A. Baker Jr., P. R. Graves-Morris}, Pad\'e Approximants I and II, Cambridge Univ. Press, 1996.

\bibitem{Bar} \textsc{L. Baratchart}, {\it Existence and generic properties for $L^{2}$ approximants of linear systems}, IMA J. of Math. Control Inform., \textbf{3} (1986),
89--101.

\bibitem{Bay0} \textsc{F. Bayart}, {\it Topological and algebraic genericity of divergence and universality}, Studia Math. \textbf{167} (2005) 161--181.

\bibitem{Bay} \textsc{F. Bayart}, {\it Linearity of sets of strange functions,} Michigan Math. J. \textbf{53} (2005) 291--303.

\bibitem{bgnp} \textsc{F. Bayart, K.-G. Grosse-Erdmann, V. Nestoridis, C. Papadimitropoulos}, {\it Abstract theory of universal series and applications,} Proc. London Math. Soc. \textbf{96} (2008) 417--463.

\bibitem{BO} \textsc{L. Bernal-Gonz\'alez, M. Ord\'o\~nez Cabrera}, {\it Lineability criteria, with applications,} J. Funct. Anal. \textbf{266} (2014), 3997--4025.

\bibitem{BPS} \textsc{L. Bernal-Gonz\'alez, D. Pellegrino, J. B. Seoane-Sep\'ulveda}, {\it Linear subsets of non-linear sets in topological vector spaces,} Bull. Amer. Math. Soc., to appear.

\bibitem{BOR} \textsc{P.B. Borwein}, {\it The usual behaviour of rational approximants,} Canad. Math. Bull. \textbf{26} (1983) 317--323.

\bibitem{BOR2} \textsc{P.B. Borwein, S.P. Zhou}, {\it 
The usual behavior of rational approximation II,}
J. Approx. Theory \textbf{72} (1993), 278--289. 

\bibitem{CH}\textsc{S. Charpentier}, \textit{On the closed subspaces of universal series in Banach and Fr\'echet spaces}, Studia Math. \textbf{198} (2010) 121--145.

\bibitem{CM} \textsc{S. Charpentier, A. Mouze}, {\it Universal Taylor series and summability}, preprint.

\bibitem{DFN} \textsc{N. Daras, G. Fournodavlos, V. Nestoridis}, {\it Universal Pad\'e approximants on simply connected domains}, preprint.

\bibitem{FNes} \textsc{G. Fournodavlos, V. Nestoridis}, {\it Generic approximation of functions by their Pad\'e approximants,} J. Math. Anal. Appl. \textbf{408} (2013), 744--750.

\bibitem{GLM} \textsc{W. Gehlen, W. Luh, J. M\"uller}, {\it On the existence of O-universal functions,} Complex Variables Theory Appl. \textbf{41} (2000), 81--90.

%\bibitem{Gon1}\textsc{A. A. Gonchar}, \textit{Properties of functions related to their rate of approximability by rational functions}, Amer. Math. Soc. Transl. \textbf{91} (1970), 99--128.

\bibitem{Gon} \textsc{A.A. Gonchar}, {\it Uniform convergence of diagonal Pad\'e approximants}, Math. USSR-Sb. 46 (1983), 539--559.

\bibitem{GRA} \textsc{W.B. Gragg}, {\it The Pad\'e table and its relation to certain algorithms of numerical analysis}, SIAM Review {\bf 14} (1972), 1--61.

\bibitem{G-E} \textsc{K.-G. Grosse-Erdmann, A. Peris}, {\it Linear Chaos}, Springer, New York, 2011.

\bibitem{Her} \textsc{G. Herzog}, {\it On universal functions and interpolation}, Analysis \textbf{11} (1991), 21--26. 

\bibitem{Her2} \textsc{G. Herzog}, {\it Lagrange interpolation for the disk algebra: the worst case.}, J. Approx. Theory \textbf{115} (2002), 354--358.

\bibitem{Luh2}\textsc{W. Luh}, \textit{Universal approximation properties of overconvergent power series on open sets}, Analysis \textbf{6} (1986), 191--207.

%\bibitem{MNadv} \textsc{A. Melas, V. Nestoridis}, {\it Universality of Taylor Series as a Generic Property of Holomorphic Functions,} Adv. Math., \textbf{157} (2001), 138--176.

\bibitem{Men}\textsc{Q. Menet}, \textit{Sous-espaces ferm\'es de s\'eries universelles sur un espace de Fr\'echet}, Studia Math. \textbf{207} (2011) 181--195.

\bibitem{Men2}\textsc{Q. Menet}, \textit{Hypercyclic subspaces and weighted shifts}, Adv. Math. \textbf{255} (2014) 305--337.

\bibitem{Montes} \textsc{A. Montes-Rodr\'iguez}, {\it Banach spaces of hypercyclic vectors}, Michigan Math. J. {\bf 43} (1996), 419--436.

\bibitem{Nes} \textsc{V. Nestoridis}, {\it Universal Taylor series,} Ann. Inst. Fourier \textbf{46} (1996), 1293--1306.

\bibitem{Nes2} \textsc{V. Nestoridis}, {\it An extension of the notion of universal Taylor series,} Computational methods and function theory 1997 (Nicosia), 421--430, Ser. Approx. Decompos., \textbf{11}, World Sci. Publ., River Edge, NJ, 1999.

\bibitem{Nes3} \textsc{V. Nestoridis}, {\it Universal Pad\'e approximants with respect to the chordal metric,} Izv. Nats. Akad. Nauk. Armenii Mat. \textbf{47} (2012), 13--34; translation in J. Contemp. Math. Anal. \textbf{47} (2012), no. 4, 168--181.

\bibitem{NS} \textsc{E.M. Nikishin, V.N. Sorokin}, Rational Approximations and Orthogonality,{\em Transl. Amer. Math. Soc.}, Vol. {\bf 92}, Providence, R.I., 1991.
  
\bibitem{Pet} \textsc{H. Petersson}, {\it Hypercyclic subspaces for Fr\'echet space operators,} J. Math. Anal. Appl. \textbf{319} (2006), 764--782.

\bibitem{Per} \textsc{O. Perron}, "Die Lehre von den Kettenbr\"uchen," 3rd ed., Vol. 2, Teubner, Stuttgart, 1957.

\bibitem{Wal} \textsc{H. Wallin}, {\it The convergence of Pad\'e approximants and the size of the power series coefficients}. Applicable Anal., 4 (1974), 235--251.
\end{thebibliography}
\end{document}